\providecommand{\U}[1]{\protect\rule{.1in}{.1in}}
\newtheorem{theorem}{Theorem}
\theoremstyle{plain}
\newtheorem{axiom}{Assumption}
\newtheorem{lemma}{Lemma}
\theoremstyle{definition}
\newtheorem*{acknowledgement}{Acknowledgement}
\newtheorem{definition}{Definition}
\newtheorem{example}{Example}
\theoremstyle{remark}
\newtheorem{remark}{Remark}
\numberwithin{equation}{section}
\begin{document}
\title[Stability of locally $\mathbb{R}^{N}$-invariant solutions of Ricci flow]{Convergence and stability of locally $\mathbb{R}^{N}$-invariant solutions of
Ricci flow}
\author{Dan Knopf}
\address[Dan Knopf]{ University of Texas at Austin}
\email{danknopf@math.utexas.edu}
\urladdr{http://www.ma.utexas.edu/users/danknopf/}
\thanks{The author acknowledges NSF support in the form of grants DMS-0545984 and DMS-0505920.}

\begin{abstract}
Valuable models for immortal solutions of Ricci flow that collapse with
bounded curvature come from locally $\mathcal{G}$-invariant solutions on
bundles $\mathcal{G}^{N}\hookrightarrow\mathcal{M}\,\overset{\pi
}{\mathcal{\longrightarrow}}\,\mathcal{B}^{n}$, with $\mathcal{G}$ a nilpotent
Lie group. In this paper, we establish convergence and asymptotic stability,
modulo smooth finite-dimensional center manifolds, of certain $\mathbb{R}^{N}%
$-invariant model solutions. In case $N+n=3$, our results are relevant to work
of Lott classifying the asymptotic behavior of all $3$-dimensional Ricci flow
solutions whose sectional curvatures and diameters are respectively
$\mathcal{O}(t^{-1})$ and $\mathcal{O}(t^{1/2})$ as $t\rightarrow\infty$.

\end{abstract}

\subjclass[2000]{53C44, 53C21, 58J35}
\keywords{Ricci flow, asymptotic stability, center manifold theory}

\maketitle
\tableofcontents

\section{Introduction}

There are many interesting open questions regarding the geometric and analytic
properties of immortal solutions $(\mathcal{M},g(t))$ of Ricci flow that
collapse. One does not expect such solutions to converge smoothly in a naive
sense. Instead, one may expect $(\mathcal{M},g(t))$ to exhibit
Gromov--Hausdorff convergence to a lower-dimensional geometric object. (See
\cite{Fukaya88}, for example.) An impelling motivation for studying such
solutions when the dimension of the total space is three is to obtain a better
understanding of the role of smooth collapse in the Ricci flow approach to
geometrization. See \cite{Lott07a} and \cite{GIK06} for a broader discussion
of some geometric and analytic problems related to collapse and their motivations.

In a recent paper \cite{Lott07}, John Lott makes significant progress in
understanding the long-time behavior of Ricci flow on a compact $3$-manifold.
He proves that any solution $(\mathcal{M}^{3},g(t))$ of Ricci flow that exists
for all $t\geq0$ with sectional curvatures that are $\mathcal{O}(t^{-1})$ and
diameter that is $\mathcal{O}(t^{1/2})$ as $t\rightarrow\infty$ converges
(after pullback to the universal cover and modification by diffeomorphisms) to
an expanding homogeneous soliton. A key component in the proof is the analysis
of locally $\mathcal{G}$-invariant solutions of Ricci flow, where
$\mathcal{G}$ is a connected $N$-dimensional Abelian Lie group. Such solutions
are expected to constitute valuable models for the behavior of Type-III
(immortal) solutions of Ricci flow in all dimensions, especially solutions
undergoing collapse. We now review the basic setup from \cite{Lott07}.

Let $\mathbb{R}^{N}\hookrightarrow\mathcal{M}\,\overset{\pi}%
{\mathcal{\longrightarrow}}\,\mathcal{B}^{n}$ be a fiber bundle over a
connected oriented compact base $\mathcal{B}$, and let $\mathbb{R}%
^{N}\hookrightarrow\mathcal{E}\,\overset{p}{\mathcal{\longrightarrow}%
}\,\mathcal{B}^{n}$ be a flat vector bundle over $\mathcal{B}$. Let
$\mathcal{E}\times_{\mathcal{B}}\mathcal{M}$ denote the fiber space $%
{\textstyle\bigcup\nolimits_{b\in\mathcal{B}}}
(\mathcal{E}_{b}\times\mathcal{M}_{b})$. Assume that there exists a smooth map
$\mathcal{E}\times_{\mathcal{B}}\mathcal{M}\rightarrow\mathcal{M}$ such that
over each basepoint $b\in\mathcal{B}$, the map $\mathcal{E}_{b}\times
\mathcal{M}_{b}\rightarrow\mathcal{M}_{b}$ is a free transitive action. Assume
also that this action is consistent with the flat connection on $\mathcal{E}$
in the sense that for any open set $\mathcal{U}\subseteq\mathcal{B}$ small
enough that $\mathcal{E}|_{\mathcal{U}}\approx\mathcal{U}\times\mathbb{R}^{N}$
is a local trivialization, the set $\pi^{-1}(\mathcal{U})$ has a free
$\mathbb{R}^{N}$ action, hence is the total space of a principal
$\mathbb{R}^{N}$ bundle over $\mathcal{U}$. In this way, $\mathcal{M}$ may be
regarded as an $\mathbb{R}^{N}$-principal bundle over $\mathcal{B}$ twisted by
the flat vector bundle $\mathcal{E}$. One can define a connection $A$ on
$\mathcal{M}$ with the property that any restriction $A|_{\pi^{-1}%
(\mathcal{U})}$ is an $\mathbb{R}^{N}$-valued connection.

Let $\mathcal{U}\subseteq\mathcal{B}$ be an open set small enough such that
$\mathcal{E}|_{\mathcal{U}}\approx\mathcal{U}\times\mathbb{R}^{N}$ is a local
trivialization and such that there exist a local parameterization
$\rho:\mathbb{R}^{n}\rightarrow\mathcal{U}$ and a local section $\sigma
:\mathcal{U}\rightarrow\pi^{-1}(\mathcal{U})$. Then any choice of basis
$(e_{1},\ldots,e_{N})$ for $\mathbb{R}^{N}$ yields local coordinates on
$\pi^{-1}(\mathcal{U})$ via%
\[
\mathbb{R}^{n}\times\mathbb{R}^{N}\ni(x^{\alpha},x^{i})\mapsto(x^{i}%
e_{i})\cdot\sigma(\rho(x^{\alpha})),
\]
where $\{x^{\alpha}\}_{\alpha=1}^{n}$are coordinates on $\mathcal{U}$,
$\{x^{i}\}_{i=1}^{N}$are coordinates on $\mathbb{R}^{N}$, and $\cdot$ denotes
the free $\mathbb{R}^{N}$-action described above.

Let $\mathbf{\bar{g}}$ denote a Riemannian metric on $\mathcal{M}$ with the
property that this action is a local isometry. Then with respect to these
coordinates, one may write%
\begin{equation}
\mathbf{\bar{g}}=\sum_{\alpha,\beta=1}^{n}\bar{g}_{\alpha\beta}\,dx^{\alpha
}\,dx^{\beta}+\sum_{i,j=1}^{N}\bar{G}_{ij}(dx^{i}+%
{\textstyle\sum_{\alpha=1}^{n}}
\bar{A}_{\alpha}^{i}\,dx^{\alpha})(dx^{j}+%
{\textstyle\sum_{\beta=1}^{n}}
\bar{A}_{\beta}^{j}\,dx^{\beta}). \label{g-bar}%
\end{equation}
Observe here that for $b\in\mathcal{U}\subseteq\mathcal{B}$, $\sum
_{\alpha,\beta=1}^{n}\bar{g}_{\alpha\beta}(b)\,dx^{\alpha}\,dx^{\beta}$ is the
local expression of a Riemannian metric on $\mathcal{B}$, $\bar{A}_{\alpha
}^{i}(b)\,dx^{\alpha}$ is locally the pullback by $\sigma$ of a connection on
$\pi^{-1}(\mathcal{U})\rightarrow\mathcal{U}$, and $\sum_{i,j=1}^{N}\bar
{G}_{ij}(b)\,dx^{i}\,dx^{j}$ gives a Euclidean inner product on the fiber over
$b$.

A one-parameter family $(\mathcal{M},\mathbf{\bar{g}}(t):t\in\mathcal{I})$ of
such metrics evolving by Ricci flow in a nonempty time interval $\mathcal{I}$
constitutes a \emph{locally }$\mathbb{R}^{N}$\emph{-invariant Ricci flow
solution.} In \cite{Lott07}, Lott shows that such a solution is equivalent,
modulo diffeomorphisms of $\mathcal{M}$ and modifications of $\bar{A}$ by
exact forms, to the system\thinspace\footnote{Here $\bar{\nabla}_{\alpha
}=\frac{\partial}{\partial x^{\alpha}}$, $\bar{\nabla}^{\alpha}=\bar
{g}^{\alpha\beta}\frac{\partial}{\partial x^{\beta}}$, $(d\bar{A}%
)_{\alpha\beta}^{i}=\bar{\nabla}_{\alpha}\bar{A}_{\beta}^{i}-\bar{\nabla
}_{\beta}\bar{A}_{\alpha}^{i}$, $(\bar{\delta}d\bar{A})_{\alpha}^{i}%
=-\bar{\nabla}^{\beta}(d\bar{A})_{\beta\alpha}^{i}$, and $\bar{\Delta}\bar
{G}_{ij}=\bar{g}^{\alpha\beta}\bar{\nabla}_{\alpha}\bar{\nabla}_{\beta}\bar
{G}_{ij}=\bar{g}^{\alpha\beta}(\frac{\partial^{2}}{\partial x^{\alpha}\partial
x^{\beta}}\bar{G}_{ij}-\bar{\Gamma}_{\alpha\beta}^{\gamma}\frac{\partial
}{\partial x^{\gamma}}\bar{G}_{ij})$, where $\bar{\Gamma}$ represents the
Levi-Civita connection of $\bar{g}$.}
\begin{subequations}
\label{RFS}%
\begin{align}
\frac{\partial}{\partial t}\bar{g}_{\alpha\beta}  &  =-2\bar{R}_{\alpha\beta
}+\frac{1}{2}\bar{G}^{ik}\bar{G}^{j\ell}\bar{\nabla}_{\alpha}\bar{G}_{ij}%
\bar{\nabla}_{\beta}\bar{G}_{k\ell}+\bar{g}^{\gamma\delta}\bar{G}_{ij}%
(d\bar{A})_{\alpha\gamma}^{i}(d\bar{A})_{\beta\delta}^{j},\\
\frac{\partial}{\partial t}\bar{A}_{\alpha}^{i}  &  =-(\bar{\delta}d\bar
{A})_{\alpha}^{i}+\bar{G}^{ij}\bar{\nabla}^{\beta}\bar{G}_{jk}(d\bar
{A})_{\beta\alpha}^{k},\\
\frac{\partial}{\partial t}\bar{G}_{ij}  &  =\bar{\Delta}\bar{G}_{ij}-\bar
{G}^{k\ell}\bar{\nabla}_{\alpha}\bar{G}_{ik}\bar{\nabla}^{\alpha}\bar{G}_{\ell
j}-\frac{1}{2}\bar{g}^{\alpha\gamma}\bar{g}^{\beta\delta}\bar{G}_{ik}\bar
{G}_{j\ell}(d\bar{A})_{\alpha\beta}^{k}(d\bar{A})_{\gamma\delta}^{\ell}.
\end{align}

Abusing notation, we denote a solution of (\ref{RFS}) by $\mathbf{\bar{g}%
}(t)=(\bar{g}(t),\bar{A}(t),\bar{G}(t))$. In order to study the long-time
behavior of such systems, we engineer a transformation that turns certain
model solutions into fixed points whose asymptotic stability can be
investigated. To facilitate this, we assume that $\mathcal{M}$ admits a flat
connection, allowing us to regard $\bar{A}$ as an $\mathbb{R}^{N}$-valued
$1$-form. Let a function $s(t)$ and a constant $c$ be given. Let $\sigma(t)$
be any positive antiderivative of $s$. Consider the transformation
\end{subequations}
\[
(\bar{g}(t),\bar{A}(t),\bar{G}(t))=\mathbf{\bar{g}}(t)\mapsto\mathbf{g}%
(\tau)=(g(\tau),A(\tau),G(\tau)),
\]
where%
\[
g=\sigma^{-1}\bar{g},\qquad A=\sigma^{-\frac{1+c}{2}}\bar{A},\qquad
G=\sigma^{c}\bar{G},\qquad\tau=\int_{t_{0}}^{t}\sigma^{-1}(\bar{t})\,d\bar
{t},\qquad(t_{0}\in\mathcal{I}).
\]
Observe that the exponent $-(1+c)/2$ above is necessary so that no factors of
$\sigma$ appear in the transformed system~(\ref{RRFS}) below.
Examples~\ref{Product}--\ref{Sol} below illustrate why this is desirable.
Under the transformation $\mathbf{\bar{g}}(t)\mapsto\mathbf{g}(\tau)$,
system~(\ref{RFS}) becomes\thinspace\footnote{Here $\nabla_{\alpha}%
=\frac{\partial}{\partial x^{\alpha}}$, $\nabla^{\alpha}=g^{\alpha\beta}%
\frac{\partial}{\partial x^{\beta}}$, $(dA)_{\alpha\beta}^{i}=\nabla_{\alpha
}A_{\beta}^{i}-\nabla_{\beta}A_{\alpha}^{i}$, $(\delta dA)_{\alpha}%
^{i}=-\nabla^{\beta}(dA)_{\beta\alpha}^{i}$, and $\Delta G_{ij}=g^{\alpha
\beta}\nabla_{\alpha}\nabla_{\beta}G_{ij}=g^{\alpha\beta}(\frac{\partial^{2}%
}{\partial x^{\alpha}\partial x^{\beta}}G_{ij}-\Gamma_{\alpha\beta}^{\gamma
}\frac{\partial}{\partial x^{\gamma}}G_{ij})$, where $\Gamma$ represents the
Levi-Civita connection of $g$.}
\begin{subequations}
\label{RRFS}%
\begin{align}
\frac{\partial}{\partial\tau}g_{\alpha\beta}  &  =-2R_{\alpha\beta}+\frac
{1}{2}G^{ik}G^{j\ell}\nabla_{\alpha}G_{ij}\nabla_{\beta}G_{k\ell}%
+g^{\gamma\delta}G_{ij}(dA)_{\alpha\gamma}^{i}(dA)_{\beta\delta}%
^{j}-sg_{\alpha\beta},\\
\frac{\partial}{\partial\tau}A_{\alpha}^{i}  &  =-(\delta dA)_{\alpha}%
^{i}+G^{ij}\nabla^{\beta}G_{jk}(dA)_{\beta\alpha}^{k}-\frac{1+c}{2}sA_{\alpha
}^{i},\\
\frac{\partial}{\partial\tau}G_{ij}  &  =\Delta G_{ij}-G^{k\ell}\nabla
_{\alpha}G_{ik}\nabla^{\alpha}G_{\ell j}-\frac{1}{2}g^{\alpha\gamma}%
g^{\beta\delta}G_{ik}G_{j\ell}(dA)_{\alpha\beta}^{k}(dA)_{\gamma\delta}^{\ell
}+csG_{ij}.
\end{align}
We call this system a \emph{rescaled locally }$\mathbb{R}^{N}$\emph{-invariant
Ricci flow.}
\end{subequations}
\begin{example}
\label{Product}On a solution $(\mathbb{R}^{N}\times\mathcal{B},\mathbf{\bar
{g}}(t))$ of (\ref{RFS}) that is a Riemannian product over a nonflat Einstein
base $(\mathcal{B},\bar{g})$, one may choose coordinates so that $\bar{G}$ is
constant, $\bar{A}$ vanishes, and $\bar{g}(t)=-\varkappa t\,\bar{g}%
(-\varkappa)$, where $\varkappa=\pm1$ is the Einstein constant such that
$2\operatorname*{Rc}(\bar{g}(-\varkappa))=\varkappa\bar{g}(-\varkappa)$. (Note
that $\bar{g}(t)$ exists for $t<0$ if $\varkappa=1$ and for $t>0$ if
$\varkappa=-1$.) The choices $s=-\varkappa$, $c=0$, and $t_{0}=-\varkappa$
transform $\mathbf{\bar{g}}(t_{0})$ into a stationary solution $\mathbf{g}(0)$
of the autonomous system (\ref{RRFS}).
\end{example}

\begin{example}
\label{NRF}A somewhat more general normalization is as follows. Suppose that
$g(\tau)$ may be regarded as a well-defined metric on $\mathcal{B}$. Let
$V(\tau)$ denote the volume of $(\mathcal{B},g(\tau))$ and define%
\[
r=R-\frac{1}{4}\left\vert \nabla G\right\vert ^{2}-\frac{1}{2}\left\vert
dA\right\vert ^{2},
\]
where everything is computed with respect to $g$. Because $\frac{dV}{d\tau
}=-\int_{\mathcal{B}}r\,\mathrm{d\mu}\,-\frac{n}{2}sV(\tau)$, it follows that
$V$ is fixed if and only if\thinspace\footnote{Given any smooth function
$f:\mathcal{B}\rightarrow\mathbb{R}$, we define $\oint_{\mathcal{B}}%
f\,d\mu=\frac{\int_{\mathcal{B}}f\,d\mu}{\int_{\mathcal{B}}d\mu}$.}%
\[
s=-\frac{2}{n}\oint_{\mathcal{B}}r\,\mathrm{d\mu}\,.
\]
Consider a Riemannian product solution over an Einstein base. Such a solution
may be written as $\mathbf{\bar{g}}(t)=(\bar{g}(t),\bar{A},\bar{G})$, where
$\bar{A}$ vanishes identically and $\bar{G}$ is fixed in space and time. For
any $t_{0}$ in its time domain of existence, taking $c=0$ makes $\mathbf{g}%
(0)=(\sigma^{-1}(t_{0})\bar{g}(t_{0}),\,0,\,\bar{G})$ into a stationary
solution of (\ref{RRFS}) for any choice of $\sigma^{-1}(t_{0})>0$.
\end{example}

\begin{example}
\label{Sol}The mapping torus $\mathcal{M}_{f}$ of $f:\mathbb{R}^{N}%
\rightarrow\mathbb{R}^{N}$ is $([0,1]\times\mathbb{R}^{N})/\sim$, where
$(0,p)\sim(1,f(p))$. With respect to coordinates $x^{0}\in\mathcal{S}^{1}$ and
$(x^{1},\ldots,x^{N})\in\mathbb{R}^{N}$ on the circle bundle $\mathbb{R}%
^{N}\hookrightarrow\mathcal{M}_{f}\,\overset{\pi}{\mathcal{\longrightarrow}%
}\,\mathcal{S}^{1}$, a locally $\mathbb{R}^{N}$-invariant metric has the form%
\[
\mathbf{g}=u\,dx^{0}\otimes dx^{0}+A_{i}\,(dx^{0}\otimes dx^{i}+dx^{i}\otimes
dx^{0})+G_{ij}\,dx^{i}\otimes dx^{j},
\]
where $u$, $A^{i}$, and $G$ depend only on $x^{0}$, and $A_{i}=G_{ij}A^{j}$.
System~(\ref{RRFS}) reduces to
\begin{subequations}
\label{n=1}%
\begin{align}
\frac{\partial}{\partial\tau}u  &  =\frac{1}{2}\left\vert \partial
_{0}G\right\vert ^{2}-su,\label{n=1g}\\
\frac{\partial}{\partial\tau}A^{i}  &  =-\frac{1+c}{2}sA^{i},\label{n=1A}\\
\frac{\partial}{\partial\tau}G_{ij}  &  =u^{-1}(\nabla_{0}^{2}G_{ij}-G^{k\ell
}\partial_{0}G_{ik}\partial_{0}G_{\ell j})+csG_{ij}, \label{n=1G}%
\end{align}
where $\left\vert \partial_{0}G\right\vert ^{2}=G^{ik}G^{j\ell}\partial
_{0}G_{ij}\partial_{0}G_{k\ell}$ and $\nabla_{0}^{2}=\partial_{0}%
^{2}-\mathbf{\Gamma}_{00}^{0}\partial_{0}$. A motivating case occurs when
$N=2$. Compact $3$-manifolds with $\operatorname*{sol}$ geometry are mapping
tori of automorphisms $\mathcal{T}^{2}\rightarrow\mathcal{T}^{2}$ induced by
$\Lambda\in\operatorname*{SL}(2,\mathbb{Z})$ with eigenvalues $\lambda
^{-1}<1<\lambda$. Now let $\mathbf{g}$ be a locally homogeneous
$\operatorname*{sol}$-Gowdy metric of the kind considered as initial data in
\cite{HI93} and \cite{Knopf00}, so that $A=0$. Without loss of generality, one
may parameterize the initial data by arc length (thereby setting $u=1$ at
$t=0$) and thereby write $\mathbf{g}=dx^{0}\otimes dx^{0}+G_{ij}%
\,dx^{i}\,dx^{j}$, where one identifies $G$ with the matrix%
\end{subequations}
\[%
\begin{pmatrix}
e^{F+W} & 0\\
0 & e^{F-W}%
\end{pmatrix}
.
\]
Here $F$ is an arbitrary constant and $\partial_{0}W=2\log\lambda$ is a
topological constant determined by the holonomy $\Lambda$ around
$\mathcal{S}^{1}=[0,1]/\sim$. It is easy to check that this $\mathbf{g}$
becomes a stationary solution of (\ref{n=1}) if and only if $s=(2\log
\lambda)^{2}>0$ and $c=0$. For the corresponding unscaled solution
$\mathbf{\bar{g}}=(\bar{u}(t),\bar{A}(t),\bar{G}(t))$, one has $\bar{u}=1+st$,
with $\bar{A}=0$ and $\bar{G}$ fixed in time.
\end{example}

Our purpose in this note is to investigate the stability of certain natural
fixed points of system~(\ref{RRFS}) like those in Examples~(\ref{Product}%
)--(\ref{Sol}). More precisely, we establish convergence and asymptotic
stability of rescaled Ricci flow, modulo smooth finite-dimensional center
manifolds, for all initial data sufficiently near certain model solutions in
the cases $N=1$ or $n=1$. These are the cases most amenable to study by
linearization and also most relevant to applications when the total dimension
is $N+n=3$, e.g.~to \cite{Lott07}. Because stability of flat metrics (modulo
smooth finite-dimensional center manifolds) was established in \cite{GIK02},
we assume in what follows that the model solutions in question are not flat.

A standard method for establishing stability of a nonlinear flow
$\widetilde{\mathbf{g}}_{\tau}\mathbf{=F}(\widetilde{\mathbf{g}})$ near a
stationary solution $\mathbf{g}$ is to proceed in two steps. (1) Compute the
linearized operator $\mathbf{L}_{\mathbf{g}}^{\mathbb{C}}$ and establish that
it is sectorial with stable spectrum.\footnote{See
Definition~\ref{DefineSectorial} on page~\pageref{DefineSectorial}.} (2)
Deduce stability of the nonlinear flow in appropriate function spaces from
properties of its linearization. For Ricci flow, this method is often not as
easy as one might expect. There are several reasons for this. To apply
existing theory such as \cite{DL85}, one requires the linearization to be
elliptic. But because Ricci flow is invariant under the full
infinite-dimensional diffeomorphism group, its linearization $\mathbf{L}%
_{\mathbf{g}}^{\mathbb{C}}$ is not elliptic unless one fixes a gauge,
typically by the introduction of DeTurck diffeomorphisms. Even then,
$\mathbf{L}_{\mathbf{g}}^{\mathbb{C}}$ may fail to be self adjoint. (For
example, see \cite{GIK06} and Section~\ref{MTRF} below.) Furthermore, in many
cases of interest, such as those considered in \cite{GIK02} and in this paper,
the spectrum of $\mathbf{L}_{\mathbf{g}}^{\mathbb{C}}$ has nonempty
intersection with the imaginary axis. In general, this allows families of
stationary or slowly changing solutions to compose local $C^{r}$ center
manifolds, which can shrink as $r$ increases.\footnote{For example, the
\textquotedblleft inner layer\textquotedblright\ asymptotics of a Ricci flow
neckpinch correspond to non-stationary solutions in the kernel of its
linearization at the cylinder soliton. See \cite{AK2}.} Finally, the DeTurck
diffeomorphisms can introduce instabilities that may not be present in the
original equation. (For example, the DeTurck diffeomorphisms solve a harmonic
map flow. But the identity map $\operatorname*{id}:\mathcal{S}^{n}%
\rightarrow\mathcal{S}^{n}$ of the round sphere is unstable under harmonic map
flow for all $n\geq3$. See Remark~\ref{UnstableHMF} below.) In spite of these
obstacles, stability results are obtained in \cite{GIK02} and (by a somewhat
different method) by \v{S}e\v{s}um in \cite{Sesum06}.

In this paper, we prove the following results that imply convergence in
little-H\"{o}lder spaces to be defined in Section~\ref{MRS}, below:

\begin{theorem}
\label{HyperbolicIsStable}Let $\mathbf{g}=(g,A,u)$ be a locally $\mathbb{R}%
^{1}$-invariant metric of the form (\ref{KaluzaKleinMetric}) on a product
$\mathbb{R}^{1}\times\mathcal{B}$, where $\mathcal{B}$ is compact and
orientable. Suppose that $g$ has constant sectional curvature $-1/2(n-1)$, $A$
vanishes, and $u$ is constant. Then for any $\rho\in(0,1)$, there exists
$\theta\in(\rho,1)$ such that the following holds.

There exists a $(1+\theta)$ little-H\"{o}lder neighborhood $\mathcal{U}$ of
$\mathbf{g}$ such that for all initial data $\widetilde{\mathbf{g}}%
(0)\in\mathcal{U}$, the unique solution $\widetilde{\mathbf{g}}(\tau)$ of
$\varkappa$-rescaled locally $\mathbb{R}^{1}$-invariant Ricci flow
(\ref{kappa-rescaled}) exists for all $\tau\geq0$ and converges exponentially
fast in the $(2+\rho)$-H\"{o}lder norm to a limit metric $\mathbf{g}_{\infty
}=(g_{\infty},A_{\infty},u_{\infty})$ such that $g_{\infty}$ is hyperbolic,
$A_{\infty}$ vanishes, and $u_{\infty}$ is constant.
\end{theorem}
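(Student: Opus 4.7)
The plan is to carry out the two-step nonlinear stability program sketched in the introduction. First I fix a DeTurck-type gauge so that the rescaled system (\ref{RRFS}) becomes strictly parabolic; then I linearize at the stationary point $\mathbf{g}=(g,0,u)$, establish that the linearization is sectorial with stable spectrum off a finite-dimensional kernel, and finally invoke the center-manifold theorem for quasilinear parabolic equations in little-H\"{o}lder spaces (as in \cite{DL85} and its adaptations in \cite{GIK02} and \cite{Sesum06}) to deduce the stated convergence. The curvature hypothesis $2\operatorname{Rc}(g)=-g$ combined with Example~\ref{Product} dictates the rescaling exponents $s=1$, $c=0$; this is the precise meaning of ``$\varkappa$-rescaled'' here, and makes $\mathbf{g}$ truly stationary for the autonomous system.

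Because $A\equiv 0$ and $u$ is constant at $\mathbf{g}$, the linearized operator $\mathbf{L}_{\mathbf{g}}$ decouples into three blocks acting on the perturbation $(h,a,v)=(\delta g,\delta A,\delta u)$. The $v$-block is $u^{-1}\Delta$ on scalars, whose kernel on the compact base consists of the constants. The $a$-block, after a standard Hodge-type gauge term for $1$-forms, becomes $-\Delta_{\mathrm{H}}-\tfrac{1}{2}$, with spectrum contained in $(-\infty,-\tfrac{1}{2}]$. The $h$-block, after the usual DeTurck modification, takes the familiar form $-\Delta_L h - h + \text{l.o.t.}$ built from the Lichnerowicz Laplacian of $g$; on a compact hyperbolic base with $2\operatorname{Rc}(g)=-g$, the spectrum of $-\Delta_L-\mathrm{Id}$ on transverse-traceless tensors lies in $(-\infty,0]$ with kernel trivial when $n\geq 3$ (by Mostow rigidity) and equal to the tangent space of Teichm\"{u}ller space when $n=2$, while on pure-trace directions it reduces to a positive multiple of $\Delta$ on scalars. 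Combining the three blocks shows that $\mathbf{L}_{\mathbf{g}}$ is sectorial on suitable little-H\"{o}lder spaces, its spectrum lies in $\{\operatorname{Re} z\leq-\delta\}\cup\{0\}$ for some $\delta>0$, and its kernel $K$ is finite dimensional and semisimple. A direct dimension count identifies $K$ with the tangent space at $\mathbf{g}$ to the smooth finite-dimensional manifold $\mathcal{S}$ of stationary solutions of the gauged flow, namely the triples $(g_\infty,0,u_\infty)$ with $g_\infty$ hyperbolic of the prescribed curvature and $u_\infty$ a positive constant.

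With this spectral picture, the center-manifold theorem produces a local $C^1$ invariant manifold $\mathcal{W}^c$ tangent to $K$; because $\mathcal{S}\subseteq\mathcal{W}^c$ and the two have equal dimension, $\mathcal{W}^c=\mathcal{S}$ near $\mathbf{g}$. Exponential attraction toward $\mathcal{W}^c$ combined with parabolic smoothing from $(1{+}\theta)$ little-H\"{o}lder data to $(2{+}\rho)$-H\"{o}lder regularity then forces every trajectory starting in a small enough neighborhood $\mathcal{U}$ to converge exponentially to a specific element $\mathbf{g}_\infty\in\mathcal{S}$. Undoing the DeTurck reparametrization, whose boundedness along the flow follows from the exponential decay off $K$, completes the proof. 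The main obstacle is the second step: because Ricci-flow linearizations are generically non-self-adjoint and because DeTurck-type corrections can introduce spurious unstable modes (as noted in the cautionary Remark~\ref{UnstableHMF}), one must verify carefully that the gauge terms chosen for the $h$- and $a$-blocks do not import eigenvalues with positive real part, and that the abstractly produced center manifold can be identified with the geometric family $\mathcal{S}$; the latter uses the local rigidity of compact hyperbolic structures in an essential way.
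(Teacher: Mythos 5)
Your overall strategy is the same as the paper's: DeTurck gauge, linearization at $(g,0,u)$, block spectral analysis, Simonett-type center-manifold theory in little-H\"{o}lder/continuous-interpolation spaces, identification of the center manifold with the family of stationary solutions, and removal of the gauge at the end. Two places need repair, one of them essential. On the spectral step, your claims are asserted rather than derived: Mostow rigidity tells you nothing about the \emph{sign} of the spectrum of $\Delta_{\ell}+\varkappa$ on transverse-traceless tensors; the paper gets strict stability for $n\geq3$ (and weak stability with kernel the holomorphic quadratic differentials for $n=2$) from Koiso's Bochner formula, which yields
$(\mathbf{L}_{2}h,h)=-\tfrac{1}{2}\Vert T\Vert^{2}-\Vert\delta h\Vert^{2}-\tfrac{1}{2(n-1)}\Vert H\Vert^{2}-\tfrac{n-2}{2(n-1)}\Vert h\Vert^{2}$.
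Moreover your statement that the pure-trace block ``reduces to a positive multiple of $\Delta$'' is wrong: it is $\Delta_{0}$ minus a positive constant, hence strictly stable. If it were merely $\Delta_{0}$, the kernel would contain $\{cg\}$; since rescalings $cg$ of the hyperbolic metric are \emph{not} fixed points of the $\varkappa$-rescaled flow, your dimension count identifying the center manifold with the stationary family would then fail (compare Remark~\ref{kappa-bad}, where exactly this conformal direction becomes unstable for the opposite sign of curvature).

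The genuine gap is your final sentence: ``undoing the DeTurck reparametrization, whose boundedness along the flow follows from the exponential decay off $K$.'' The DeTurck vector field does not vanish on the center manifold: for $n=2$ the gauged flow moves nearby hyperbolic metrics by nontrivial diffeomorphisms, so exponential decay of the distance to the center manifold gives no a priori bound on $\varphi_{\tau}$. Without such a bound you cannot guarantee that the gauged solution $\widehat{\mathbf{g}}(\tau)$ remains in the ball on which the attraction estimate is valid, nor that the gauged trajectory converges to a single point rather than drifting along the center manifold. The paper closes this loop in Step~4 by a separate geometric argument: because the center manifold consists of fixed points of the \emph{ungauged} flow, the ungauged velocity decays exponentially and $\widetilde{\mathbf{g}}(\tau)$ stays near $\mathbf{g}$; and the diffeomorphisms $\varphi_{\tau}$ solve a harmonic map heat flow with target the nonpositively curved metric $\mathbf{g}$, so Eells--Sampson/Simon-type estimates give a uniform bound $\Vert\varphi_{\tau}-\operatorname{id}\Vert_{\mathbb{X}_{1}}\leq c(\delta)$, which in turn keeps $\widehat{\mathbf{g}}(\tau)$ in the ball for all $\tau\geq0$. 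This is precisely where the hyperbolicity of the base enters a second time (and why the spherical case, Theorem~\ref{TwoSphereIsStable}, needs Topping's rigidity theorem instead). You must supply this harmonic-map-flow control, or a substitute, for the proof to be complete.
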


\begin{theorem}
\label{TwoSphereIsStable}Let $\mathbf{g}=(g,A,u)$ be a locally $\mathbb{R}%
^{1}$-invariant metric of the form (\ref{KaluzaKleinMetric2}) on a product
$\mathbb{R}^{1}\times\mathcal{S}^{2}$. Suppose that $g$ has constant positive
sectional curvature, $A$ vanishes, and $u$ is constant. Then for any $\rho
\in(0,1)$, there exists $\theta\in(\rho,1)$ such that the following holds.

There exists a $(1+\theta)$ little-H\"{o}lder neighborhood $\mathcal{U}$ of
$\mathbf{g}$ such that for all initial data $\widetilde{\mathbf{g}}%
(0)\in\mathcal{U}$, the unique solution $\widetilde{\mathbf{g}}(\tau)$ of
volume-rescaled locally $\mathbb{R}^{1}$-invariant Ricci flow
(\ref{volume-rescaled}) exists for all $\tau\geq0$ and converges exponentially
fast in the $(2+\rho)$-H\"{o}lder norm to a limit metric $\mathbf{g}_{\infty
}=(g_{\infty},A_{\infty},u_{\infty})$ such that $g_{\infty}$ has constant
positive sectional curvature, $A_{\infty}$ vanishes, and $u_{\infty}$ is constant.
\end{theorem}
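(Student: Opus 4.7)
The plan is to mirror the argument for Theorem~\ref{HyperbolicIsStable}, modified to accommodate the positive curvature of the base and the use of volume rescaling rather than $\varkappa$-rescaling. First I would substitute the ansatz (\ref{KaluzaKleinMetric2}) into the volume-rescaled system (\ref{volume-rescaled}) and compute the linearization $\mathbf{L}_{\mathbf{g}}$ at the stationary solution $(g, 0, u)$. Because $A$ vanishes and $u$ is constant, the off-diagonal couplings between $(g,A,u)$ that appear in (\ref{RRFS}) drop out to leading order at $\mathbf{g}$, so $\mathbf{L}_{\mathbf{g}}$ is (after DeTurck gauge-fixing) block triangular: the $g$-block is essentially a Lichnerowicz Laplacian on symmetric $2$-tensors over $(\mathcal{S}^2, g)$ shifted by the volume-normalization term coming from $s$; the $A$-block is a Hodge Laplacian on $1$-forms with a zero-order shift of $-\tfrac{1+c}{2}\,s$; and the $u$-block is a scalar Laplacian with a shift determined by $c$ and $s$.

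Once the linearization is written in this block form, I would verify on each block that the operator is sectorial in the little-H\"older spaces $h^{2+\rho}$ over $\mathcal{S}^1 \times \mathcal{S}^2$, using the same machinery (Da Prato--Grisvard/Lunardi maximal regularity, as invoked in Section~\ref{MRS}) that underlies Theorem~\ref{HyperbolicIsStable}. The critical step is to locate the spectrum. On the $g$-block, the constant-curvature metric on $\mathcal{S}^2$ is a strict local maximizer of Perelman's $\lambda$-functional among volume-normalized metrics (equivalently, its Einstein operator is strictly stable on the orthogonal complement of the conformal Killing directions), so the nontrivial spectrum sits in a sector of strictly negative real part. The imaginary-axis spectrum on this block is finite dimensional and consists of the unstable/neutral modes created by the choice of DeTurck background together with the conformal Killing fields of $(\mathcal{S}^2, g)$; these correspond to genuine families of stationary solutions (isometries, constant rescalings of $g$ compensated by $s$, and rigid translations in $x^0$) and so can be absorbed into a finite-dimensional center. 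On the $u$-block and $A$-block, the choice of $c$ and $s$ implicit in volume rescaling makes the zero-order shifts nonpositive, with the zero eigenvalue corresponding, respectively, to constant rescalings of $u$ and (because $A$ is a $1$-form on $\mathcal{S}^1$-bundles) to the trivial harmonic directions.

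With sectoriality and a spectral decomposition into a finite-dimensional neutral part and an exponentially stable complement in hand, I would apply the invariant manifold theorem (Simonett's version, as used in \cite{GIK02} and for Theorem~\ref{HyperbolicIsStable}) to produce a smooth finite-dimensional local center manifold $\mathcal{W}^{\mathrm{c}}$ through $\mathbf{g}$ in the little-H\"older topology $h^{1+\theta}$. The key geometric input is that $\mathcal{W}^{\mathrm{c}}$ is filled entirely by stationary solutions: by the classification of round metrics on $\mathcal{S}^2$ up to diffeomorphism and scaling, every nearby fixed point of the volume-rescaled flow has the form $(g_\infty, 0, u_\infty)$ with $g_\infty$ round and $u_\infty$ constant. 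Asymptotic stability of $\mathcal{W}^{\mathrm{c}}$ then translates, via the interpolation argument between $h^{1+\theta}$ initial data and the smoothing estimates in $h^{2+\rho}$, into the statement that every nearby $\widetilde{\mathbf{g}}(0)$ converges exponentially in $h^{2+\rho}$ to some such $\mathbf{g}_\infty$.

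The main obstacle, as in Theorem~\ref{HyperbolicIsStable}, is controlling the center directions: one must show that the apparently-neutral spectrum of $\mathbf{L}_{\mathbf{g}}$ truly comes from a finite-dimensional family of stationary solutions rather than from genuine slow modes of the nonlinear PDE. Unlike in the hyperbolic case, here one must contend with the $2$-dimensional conformal Killing algebra of $\mathcal{S}^2$, which makes the dimension of the center larger and forces a more careful identification of $\mathcal{W}^{\mathrm{c}}$ with an explicit manifold of round-metric/constant-$u$ solutions. Making this identification rigorous, together with confirming that the DeTurck-induced instabilities familiar from $n\geq 3$ (cf.\ Remark~\ref{UnstableHMF}) do not appear when the base is $\mathcal{S}^2$, is the principal technical work.
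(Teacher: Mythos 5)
Your outline follows the same broad strategy as the paper's proof (linearize the DeTurck-modified volume-rescaled flow at the round product, verify sectoriality in little-H\"older interpolation spaces, apply Simonett's theorem, and identify the local center manifolds with a finite-dimensional family of stationary solutions), but it omits the one step where the $\mathcal{S}^{2}$ case genuinely differs from Theorem~\ref{HyperbolicIsStable}. Simonett's theorem gives exponential attraction for the Ricci--DeTurck solution $\widehat{\mathbf{g}}(\tau)$, whereas the theorem concerns the Ricci flow solution $\widetilde{\mathbf{g}}(\tau)=\varphi_{\tau}^{\ast}(\widehat{\mathbf{g}}(\tau))$; so one must uniformly control the DeTurck diffeomorphisms $\varphi_{\tau}$ (Step~4 in the proof of Theorem~\ref{HyperbolicIsStable}) both to ensure that $\widehat{\mathbf{g}}(\tau)$ cannot leave the small ball by pullback and to transfer convergence back to $\widetilde{\mathbf{g}}(\tau)$. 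These $\varphi_{\tau}$ solve a harmonic map heat flow with fixed target $(\mathcal{S}^{2},\mathbf{g})$. In the hyperbolic case the required bound $\left\Vert \varphi_{\tau}-\operatorname*{id}\right\Vert _{\mathbb{X}_{1}}\leq c(\delta)$ follows from standard estimates for harmonic map flow into nonpositively curved targets; for the round $2$-sphere those estimates are unavailable, the identity map is only \emph{weakly} stable as a harmonic map, and the paper instead invokes Topping's rigidity theorem \cite{Topping97} to obtain the uniform bound. Your proposal never addresses this passage from the DeTurck flow back to the Ricci flow, and without it the stated convergence of $\widetilde{\mathbf{g}}(\tau)$ does not follow.

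Conversely, what you flag as ``the principal technical work'' is in fact handled by a short computation. The DeTurck-induced instability of Remark~\ref{UnstableHMF} is absent here because the unstable eigenvalue of $\mathbf{L}_{2}$ in (\ref{volume-linear-parabolic}) equals $(n-2)k$, which vanishes when $n=2$; thus for the $2$-sphere $\mathbf{L}_{2}$ is weakly stable with null eigenspace $\{\varphi g:\varphi\in\Lambda_{0,k}\cup\Lambda_{1,k}\}$, a four-dimensional space (constants plus the three first spherical harmonics, i.e.\ infinitesimal M\"obius directions), not the ``$2$-dimensional conformal Killing algebra'' you describe. Also, $\mathbf{L}_{1}$ is strictly stable---its top eigenvalue is $-k$, and there are no harmonic $1$-forms on $\mathcal{S}^{2}$---so the zero mode you posit in the $A$-block does not exist; if it did, it would actually be harmful, since the center manifold must be exhausted by genuine stationary solutions and nonzero $A$-directions are not stationary.
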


\begin{theorem}
\label{SolIsStable}Let $\mathbf{g}=(u,A,G)$ be a metric of the form
(\ref{CircleBundle}) on the mapping torus $\mathbb{R}^{N}\hookrightarrow
\mathcal{M}_{\Lambda}\overset{\pi}{\mathcal{\longrightarrow}}\mathcal{S}^{1}$
of a given nonzero $\Lambda\in\operatorname*{Gl}(N,\mathbb{R})$ admitting a
flat connection. Suppose that $u=1$, $A$ vanishes, and $G$ is a
harmonic-Einstein metric (\ref{Harmonic-Einstein}). Then for any $\rho
\in(0,1)$, there exists $\theta\in(\rho,1)$ such that the following holds.

There exists a $(1+\theta)$ little-H\"{o}lder neighborhood $\mathcal{U}$ of
$\mathbf{g}$ such that for all initial data $\widetilde{\mathbf{g}}%
(0)\in\mathcal{U}$, the unique solution $\widetilde{\mathbf{g}}(\tau)$ of
holonomy-rescaled locally $\mathbb{R}^{N}$-invariant Ricci flow
(\ref{MappingTorus}) exists for all $\tau\geq0$ and converges exponentially
fast in the $(2+\rho)$-H\"{o}lder norm to a limit metric $\mathbf{g}_{\infty
}=(u_{\infty},A_{\infty},G_{\infty})$ such that $u_{\infty}=1$, $A_{\infty}$
vanishes, and $G_{\infty}$ is harmonic-Einstein.
\end{theorem}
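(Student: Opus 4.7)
The plan follows the two-step template used for Theorems \ref{HyperbolicIsStable} and \ref{TwoSphereIsStable}: (i) impose a DeTurck-type gauge so that the linearization of the holonomy-rescaled flow (\ref{MappingTorus}) at the model $\mathbf{g}=(1,0,G)$ becomes an elliptic, sectorial operator on a suitable little-Hölder space; (ii) use a Da Prato--Lunardi/Simonett-style center-manifold theorem in the little-Hölder scale to convert the spectral picture into nonlinear exponential convergence, modulo the finite-dimensional moduli of harmonic-Einstein stationary data.

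Because the base is one-dimensional, I would begin by writing (\ref{MappingTorus}) in the reduced form (\ref{n=1}) adapted to the holonomy rescaling, with all fields depending only on $x^0\in\mathcal{S}^1$. A DeTurck vector field $W=W^0(x^0)\partial_0$ driven by harmonic-map flow into the fixed background $dx^0\otimes dx^0$ suffices to render the $u$-equation parabolic while contributing only Lie-derivative terms to the remaining fields. Linearizing at $\mathbf{g}$ then produces a block structure: the $\delta u$ block is a scalar second-order operator on $\mathcal{S}^1$; the $\delta A^i$ block is the Laplacian on $\mathcal{S}^1$ shifted by the strictly negative multiplier $-\tfrac{1+c}{2}s$ (the rescaling constants here should be chosen precisely to make $A$ strictly stable); and the $\delta G_{ij}$ block is a Lichnerowicz-type fiber operator combined with $\partial_0^2$ and shifted by $cs$, coupled to $\delta g$ and $\delta A$ through lower-order terms forced by $\partial_0 G\neq 0$. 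Each block is a closed second-order elliptic operator on $\mathcal{S}^1$, so sectoriality of the full system on $h^{k+\alpha}(\mathcal{S}^1)$ follows by standard interpolation.

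The main obstacle, and where I would spend most of the effort, is identifying the null space and spectral gap of the $\delta G$ block. A Fourier decomposition in $x^0$ makes the nonzero modes manifestly stable via the second-derivative term, so the entire center-space question reduces to an algebraic problem on the space of positive-definite symmetric $N\times N$ matrices: one must show that the linearization of the harmonic-Einstein condition (\ref{Harmonic-Einstein}), shifted by $cs$, has kernel exactly equal to the tangent space to the harmonic-Einstein moduli and is otherwise strictly negative. This generalizes the Lichnerowicz-stability computations of \cite{GIK02}, but is complicated by the coupling with $\delta A$ through the $|dA|^2$ term in the $G$-equation and by the inhomogeneity $\partial_0 G\neq 0$ enforced by the holonomy $\Lambda$; a frame diagonalizing $\partial_0 G$, as in the $\operatorname{sol}$-Gowdy special case of Example~\ref{Sol}, should reduce this to a concrete eigenvalue problem. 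Once this spectral dichotomy is established, the abstract center-manifold theorem provides a smooth finite-dimensional manifold of stationary limits and exponential decay in the $(2+\rho)$-Hölder norm; undoing the DeTurck diffeomorphism (which itself evolves by a contractive ODE on $\mathcal{S}^1$) then yields the statement for the original holonomy-rescaled flow.
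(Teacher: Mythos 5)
Your overall template (DeTurck gauge plus Simonett's center-manifold theorem in the little-H\"older scale) matches the paper's, but the heart of the argument is missing and in places mis-aimed. First, the gauge: a DeTurck field with only a base component $W^{0}\partial_{0}$ is not what is needed. The paper's vector field (\ref{DefineVforMT}) has fiber components $(G_{ij}\partial_{0}A^{j})\,dx^{i}$, which are what make the $A$-block second order (giving $\mathbf{L}_{1}B=\partial_{0}^{2}B-\tfrac{s}{2}B$ rather than a zeroth-order ODE, so that the full system is strictly parabolic and the graph-norm/interpolation hypotheses of the abstract theorem hold), and, crucially, a free constant $C$ multiplying $\partial_{0}u$. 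Second, you worry about a coupling of $\delta G$ to $\delta A$ through $|dA|^{2}$, but for $n=1$ one has $dA\equiv 0$ and no such coupling exists; the actual coupling is in the opposite slot: the linearization (\ref{LinearMT}) is lower-triangular, with $\mathbf{F}h=\langle\partial_{0}G,\partial_{0}h\rangle-\langle\partial_{0}^{2}G,h\rangle$ feeding $h$ into the $v$-equation, so $\mathbf{L}$ is \emph{not} self adjoint. Handling this is where the constant $C$ is used: sectoriality is obtained by writing $\mathbf{L}^{\mathbb{C}}=\mathbf{L}_{\mathrm{sa}}^{\mathbb{C}}+\mathbf{F}^{\mathbb{C}}$ with $\mathbf{F}^{\mathbb{C}}$ a relatively bounded perturbation of the self-adjoint diagonal part (\ref{LSA}), and weak stability of the full coupled operator is proved by an energy estimate in which one needs $C\geq\kappa^{2}/(2\lambda)$, $\lambda$ being the spectral gap of $\mathbf{L}_{2}$ off its kernel. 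Your proposal never confronts this off-diagonal term.

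Third, your route to the spectral dichotomy for the $G$-block would fail as stated. Since $G$ genuinely depends on $x^{0}$ (nontrivial holonomy) and $h$ satisfies the twisted boundary condition (\ref{MTboundary-h}), the operator $\mathbf{L}_{2}$ has variable coefficients and is not diagonalized by a Fourier decomposition in $x^{0}$; moreover its zeroth-order term $+\partial_{0}G_{ik}h^{k\ell}\partial_{0}G_{\ell j}$ has the \emph{wrong} sign, so stability of ``nonzero modes'' is not manifest, and the kernel elements $h=GM$ (with $M$ commuting with every $G(x^{0})$) are $x^{0}$-dependent, so the center space does not reduce to an algebraic problem about constant symmetric matrices. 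The paper instead proves $(\mathbf{L}_{2}h,h)\leq 0$ by integration by parts (boundary terms cancel because of (\ref{Holonomy}) and (\ref{MTboundary-h})), the harmonic-Einstein identity (\ref{Harmonic-Einstein}), and Cauchy--Schwarz, with equality exactly on $\{GM\}$. Finally, convergence to a limit (not merely to the center manifold) requires showing the local $C^{r}$ center manifolds coincide and consist of fixed points, which is done by exhibiting the explicit finite-dimensional family of nearby harmonic-Einstein stationary solutions; this step is asserted rather than argued in your proposal. Your remark that the diffeomorphism correction is easy because $\mathcal{S}^{1}$ is flat is in the right spirit, but the decisive linear-algebraic and gauge-choice work above is what the proof actually consists of.
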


This paper is organized as follows. In Section~\ref{Theory}, we review a
general theory of asymptotic stability for quasilinear \textsc{pde} in the
presence of center manifolds, then establish the context in which that theory
applies here. In Sections~\ref{KappaFlow}, \ref{VolumeFlow}, and \ref{MTRF},
we study and prove stability of $\varkappa$-rescaled flows, volume-rescaled
flows, and holonomy-rescaled flows, respectively. Each of these is a
suitably-chosen variant of (\ref{RRFS}). In Remarks~\ref{kappa-bad} and
\ref{volume-bad}, below, we provide (partial) explanations of these apparently
\emph{ad hoc }choices of normalization.

\begin{acknowledgement}
I\ warmly thank John Lott for sharing his results with me and raising the
questions that are addressed in this note.
\end{acknowledgement}

\begin{acknowledgement}
I also thank James Isenberg for introducing me to several problems closely
related to those considered in this note. Motivated by Kaluza--Klein theories
in physics, Dedrickson and Isenberg did early work on locally $\mathbb{R}^{1}%
$-invariant solutions (both with and without a connection), as well as
generalizations of Example~\ref{Sol} to higher-dimensional fibers (without a connection).
\end{acknowledgement}

\begin{acknowledgement}
My sincere thanks go to the referees for their suggestions to improve the
exposition and notation of this paper.
\end{acknowledgement}

\section{Center manifold stability theory for quasilinear
systems\label{Theory}}

In this section, we recall relevant aspects of the theory that allows one to
derive rigorous conclusions about the asymptotic behavior of the nonlinear
system (\ref{RRFS}) from its linearization. Good sources are \cite{DL85} and
\cite{Simonett95}. Our approach here mainly follows the latter. The reader
familiar with \cite{GIK02} can safely skim this section.

\subsection{Maximal regularity spaces\label{MRS}}

Because optimal asymptotic stability results are obtained in continuous
interpolation spaces, we begin by recalling aspects of the \emph{maximal
regularity} theory\emph{ }of Da Prato and Grisvard \cite{DG79}.
(Section~\ref{AGCMT} below provides some motivation for our use of this theory.)

Let $\mathcal{Y}$ be a compact Riemannian manifold, possible with boundary
$\partial\mathcal{Y}$. The main cases we have in mind in this note are: (1)
$\mathcal{Y}$ is a compact hyperbolic manifold; (2) $\mathcal{Y}$ is the round
sphere; or (3) $\mathcal{Y}=[0,1]$.

Let $\Sigma_{0}(\mathcal{Y})$, $\Sigma_{1}(\mathcal{Y})$, and $\Sigma
_{2}(\mathcal{Y})$ respectively denote the spaces of $C^{\infty}$ functions,
$C^{\infty}$ $1$-forms, and $C^{\infty}$ symmetric $(2,0)$-tensor fields
supported on $\mathcal{Y}$. If $\partial\mathcal{Y}\neq\emptyset$, as in
case~(3), we restrict to those functions, forms, and tensor fields satisfying
prescribed linear boundary conditions like (\ref{MTboundary-v}%
)--(\ref{MTboundary-h}), respectively.

Given $r\in\mathbb{N}$ and $\rho\in(0,1)$, let $\Sigma_{0}^{r+\rho
}(\mathcal{Y})$, $\Sigma_{1}^{r+\rho}(\mathcal{Y})$, and $\Sigma_{2}^{r+\rho
}(\mathcal{Y})$ denote the closures of $\Sigma_{0}(\mathcal{Y})$, $\Sigma
_{1}(\mathcal{Y})$, and $\Sigma_{2}(\mathcal{Y})$, respectively, with respect
to the relevant $r+\rho$ H\"{o}lder norms. These are the well-known
\emph{little-H\"{o}lder spaces.} (Recall that $C^{\infty}$ representatives are
not dense in the usual H\"{o}lder spaces.) Any little-H\"{o}lder space
$\mathfrak{h}^{r+\rho}$ is a Banach space. Moreover, if $s\leq r$ and
$\sigma\leq\rho$, then there is a continuous and dense inclusion
$\mathfrak{h}^{r+\rho}\hookrightarrow\mathfrak{h}^{s+\sigma}$.

Given a continuous dense inclusion $B_{1}\hookrightarrow B_{0}$ of Banach
spaces and $\theta\in(0,1)$, the \emph{continuous interpolation space}
$(B_{0},B_{1})_{\theta}$ is the set of all $x\in B_{0}$ such that there exist
sequences $\{y_{n}\}\subset B_{0}$ and $\{z_{n}\}\subset B_{1}$ satisfying%
\begin{equation}
\left\{
\begin{array}
[c]{c}%
x=y_{n}+z_{n}\\
\mathstrut\\
\left\Vert y_{n}\right\Vert _{B_{0}}=o(2^{-n\theta})\text{ as }n\rightarrow
\infty\\
\mathstrut\\
\left\Vert z_{n}\right\Vert _{B_{1}}=o(2^{n(1-\theta)})\text{ as }%
n\rightarrow\infty.
\end{array}
\right.  \label{DefineCISpaces}%
\end{equation}
Continuous interpolation spaces are introduced in \cite{DG79}. By \cite{DF87},
they are equivalent in norm to the real interpolation spaces also found in the
literature. In fact, the norm $\left\Vert x\right\Vert _{\theta}$ on
$(B_{0},B_{1})_{\theta}$ is equivalent to%
\[
\inf\left\{  \sup_{n\in\mathbb{N}}\{2^{n\theta}\left\Vert y_{n}\right\Vert
_{B_{0}},\ 2^{-n(1-\theta)}\left\Vert z_{n}\right\Vert _{B_{1}}\}:\{y_{n}%
\}\text{ and}\ \{z_{n}\}\text{ satisfy (\ref{DefineCISpaces})}\right\}  .
\]

The little-H\"{o}lder spaces are particularly well adapted to continuous
interpolation. (See \cite{Triebel95}.) Indeed, let $\mathfrak{h}^{r+\rho}$ and
$\mathfrak{h}^{s+\sigma}$ be little-H\"{o}lder spaces. Let $s\leq
r\in\mathbb{N}$, $0<\sigma<\rho<1$, and $0<\theta<1$. If $\theta
(r+\rho)+(1-\theta)(s+\sigma)\notin\mathbb{N}$, then there is a Banach space
isomorphism%
\begin{equation}
(\mathfrak{h}^{s+\sigma},\mathfrak{h}^{r+\rho})_{\theta}\cong\mathfrak{h}%
^{[\theta r+(1-\theta)s]+[\theta\rho+(1-\theta)\sigma]}
\label{InterpolationIsomorphism}%
\end{equation}
and there exists $C<\infty$ such that for all $\eta\in\mathfrak{h}^{r+\rho}$,
one has%
\begin{equation}
\left\Vert \eta\right\Vert _{(\mathfrak{h}^{s+\sigma},\mathfrak{h}^{r+\rho
})_{\theta}}\leq C\left\Vert \eta\right\Vert _{\mathfrak{h}^{s+\sigma}%
}^{1-\theta}\left\Vert \eta\right\Vert _{\mathfrak{h}^{r+\rho}}^{\theta}.
\label{InterpolationNorm}%
\end{equation}
These properties make the little-H\"{o}lder spaces highly useful for our purposes.

\subsection{A general center manifold theorem\label{AGCMT}}

To prove the results in this note, we invoke a special case of a theorem of
Simonett \cite{Simonett95}, whose hypotheses we now recall. (Compare
\cite{GIK02}.) Our notation is as follows. If $\mathbb{X}$ and $\mathbb{Y}$
are Banach spaces, then $\mathcal{L}(\mathbb{X},\mathbb{Y})$ is the set of
bounded linear maps $\mathbb{X}\rightarrow\mathbb{Y}$. If $(\mathbb{X},d)$ is
a metric space, then $B(\mathbb{X},x,r)$ is the open ball of radius $r>0$
centered at $x\in\mathbb{X}$. If $\mathbf{L}$ is a linear operator on a real
space, we denote its natural complexification by $\mathbf{L}^{\mathbb{C}%
}(u+iv)=\mathbf{L}u+i\mathbf{L}v$.

If $\mathbb{X}\hookrightarrow\mathbb{Y}$ is a continuous and dense inclusion
of Banach spaces and $\mathbf{Q}:\mathbb{X}\rightarrow\mathbb{Y}$ is a
nonlinear differential operator, $\mathbb{X}\ni\mathbf{g}\mapsto
\mathbf{Q}(\mathbf{g})\in\mathbb{Y}$, we denote its linearization at
$\widehat{\mathbf{g}}\in\mathbb{X}$ by $\mathbf{L}_{\widehat{\mathbf{g}}%
}=\mathbf{Q}^{\prime}(\widehat{\mathbf{g}}):\mathbb{D}(\mathbf{L}%
_{\widehat{\mathbf{g}}})\subseteq\mathbb{Y}\rightarrow\mathbb{Y}$. Our main
assumptions below are that $\mathbf{g}\mapsto\mathbf{Q}(\mathbf{g})$ is a
quasilinear differential operator and that $\mathbf{L}_{\widehat{\mathbf{g}}%
}:\mathbf{g}\mapsto\mathbf{L}_{\widehat{\mathbf{g}}}(\mathbf{g})$ generates an
analytic strongly-continuous semigroup on $\mathcal{L}(\mathbb{Y},\mathbb{Y})$.

To apply Simonett's stability theorem, one must verify certain technical
hypotheses with respect to a given quasilinear parabolic equation and ordered
Banach spaces satisfying%
\begin{equation}
\mathbb{X}_{1}\subset\mathbb{E}_{1}\subset\mathbb{X}_{0}\subset\mathbb{E}%
_{0}\qquad\text{and}\qquad\mathbb{X}_{1}\subset\mathbb{X}_{\alpha}%
\subset\mathbb{X}_{\beta}\subset\mathbb{X}_{0}, \label{BS}%
\end{equation}
whose precise relationships we describe below. The reasons for introducing
this curious arrangement of Banach spaces come from the beautiful
\emph{maximal regularity }construction of Da Prato and Grisvard \cite{DG79}.
As motivation, consider a linear initial value problem%
\[
(\ast)\left\{
\begin{array}
[c]{l}%
\dot{u}(t)=Lu(t)+f(t),\\
u(0)=u_{0},
\end{array}
\right.
\]
posed on a Banach space $\mathbb{Y}_{0}$, where the linear operator $L$
generates a strongly-continuous analytic semigroup on $\mathcal{L}%
(\mathbb{Y}_{0},\mathbb{Y}_{0})$. Then $L$ is a densely-defined closed
operator whose domain, when equipped with the graph norm,\footnote{The
\emph{graph norm with respect to }$\mathbb{Y}_{0}$ of a suitable linear
operator $L:D(L)\subseteq\mathbb{Y}_{0}\rightarrow\mathbb{Y}_{0}$ is
$\left\Vert x\right\Vert _{D(L)}=\left\Vert x\right\Vert _{\mathbb{Y}_{0}%
}+\left\Vert Lx\right\Vert _{\mathbb{Y}_{0}}$.} naturally becomes a Banach
space, which we may call $\mathbb{Y}_{1}$. Thus $\mathbb{Y}_{1}\hookrightarrow
\mathbb{Y}_{0}$ is a continuous and dense inclusion. Now (for some fixed
$T>0$) suppose that $f:[0,T]\rightarrow\mathbb{Y}_{0}$ is a bounded continuous
function. Then the \emph{formal }solution of $(\ast)$ is given by the integral
formula%
\[
u(t)=e^{tL}u_{0}+\int_{0}^{t}e^{(t-s)L}f(t)\,\mathrm{ds}\,,\qquad0<t\leq T.
\]
However, the convolution term above may not have enough regularity to make
this formal solution rigorous unless one imposes stronger hypotheses (like
requiring, for example, that $f:[0,T]\rightarrow\mathbb{Y}_{1}$ be bounded and
continuous). Da Prato and Grisvard's theory overcomes this difficulty by
restriction to suitably chosen interpolation spaces. In these (necessarily
non-reflexive) spaces, $\dot{u}$ and $Lu$ enjoy the same regularity as $f$,
which justifies the language \textquotedblleft maximal
regularity\textquotedblright.

Stability theory for nonlinear autonomous initial value problems%
\[
(\ast\ast)\left\{
\begin{array}
[c]{l}%
\dot{u}(t)=N(u(t)),\\
u(0)=u_{0},
\end{array}
\right.
\]
near a stationary solution $N(0)=0$ often proceeds by linearization, in which
one replaces $(\ast\ast)$ by $(\ast)$ with $L=N^{\prime}(0)$ and
$f(t)=N(u(t))-Lu(t)$. Here the regularity of $f$ is of evident importance.
This is where one exploits the (\ref{BS}) hierarchy. Suppose that (the
complexification of) $L:\mathbb{E}_{1}\rightarrow\mathbb{E}_{0}$ is a
sectorial operator.\footnote{See Definition~\ref{DefineSectorial} on
page~\pageref{DefineSectorial}.} The arguments Simonett uses to prove
Theorem~\ref{SimonettRaw} below require the linear problem $(\ast)$ to have a
bounded continuous solution $u:[0,T]\rightarrow\mathbb{X}_{1}$ for every
bounded continuous $f:[0,T]\rightarrow\mathbb{X}_{0}$. This will fail for
general sectorial $\mathbb{E}_{1}\rightarrow\mathbb{E}_{0}$ but can be
achieved by restricting $L:\mathbb{X}_{1}\rightarrow\mathbb{X}_{0}$ to
well-chosen continuous interpolation spaces $\mathbb{X}_{1}\hookrightarrow
\mathbb{X}_{0}$.

A similar stability theorem is derived by Da Prato and Lunardi \cite{DL85}. We
use Simonett's theorem because it takes optimal advantage of the parabolic
smoothing properties of the quasilinear equation (\ref{AQPE}) in continuous
interpolation spaces \cite{Simonett95}. With regard to (\ref{BS}), these
properties allow one to show that invariant manifolds can be exponentially
attractive in the norm of the smaller space $\mathbb{X}_{1}$ for solutions
whose initial data are close to a fixed point in the larger interpolation
space $\mathbb{X}_{\alpha}$. In particular, such solutions immediately
regularize and belong to $\mathbb{X}_{1}$ for all $t>0$.

With this brief introduction in hand, we are ready to state the seven
hypotheses one needs to apply the theorem. In our applications, we choose
certain little-H\"{o}lder spaces whose properties,
e.g.~(\ref{InterpolationIsomorphism}), greatly simplify verification of the
hypotheses. While perusing those hypotheses, readers may wish to consult
(\ref{DefineNestedSpaces}) and (\ref{YetMoreInterpolationSpaces}) below for
specifics of how the various spaces are realized in the remainder of this paper.

\begin{axiom}
\label{SimFirst}$\mathbb{X}_{1}\hookrightarrow\mathbb{X}_{0}$ and
$\mathbb{E}_{1}\hookrightarrow\mathbb{E}_{0}$ are continuous dense inclusions
of Banach spaces. For fixed $0<\beta<\alpha<1$, $\mathbb{X}_{\alpha}$ and
$\mathbb{X}_{\beta}$ are continuous interpolation spaces corresponding to the
inclusion $\mathbb{X}_{1}\hookrightarrow\mathbb{X}_{0}$.
\end{axiom}

\begin{axiom}
There is an autonomous quasilinear parabolic equation%
\begin{equation}
\frac{\partial}{\partial\tau}\widetilde{\mathbf{g}}(\tau)\mathbf{=Q}%
(\widetilde{\mathbf{g}}(\tau)),\qquad(\tau\geq0), \label{AQPE}%
\end{equation}
with the property that there exists a positive integer $k$ such that for all
$\widehat{\mathbf{g}}$ in some open set $\mathbb{G}_{\beta}\subseteq
\mathbb{X}_{\beta}$, the domain $\mathbb{D}(\mathbf{L}_{\widehat{\mathbf{g}}%
})$ of $\mathbf{L}_{\widehat{\mathbf{g}}}$ contains $\mathbb{X}_{1}$ and the
map $\widehat{\mathbf{g}}\mapsto\mathbf{L}_{\widehat{\mathbf{g}}}%
|_{\mathbb{X}_{1}}$ belongs to $C^{k}(\mathbb{G}_{\beta},\mathcal{L}%
(\mathbb{X}_{1},\mathbb{X}_{0}))$.
\end{axiom}

\begin{axiom}
For each $\widehat{\mathbf{g}}\in\mathbb{G}_{\beta}$, there exists an
extension $\widehat{\mathbf{L}}_{\widehat{\mathbf{g}}}$ of $\mathbf{L}%
_{\widehat{\mathbf{g}}}$ to a domain $\widehat{\mathbb{D}}(\widehat
{\mathbf{g}})$ that contains $\mathbb{E}_{1}$ (hence is dense in
$\mathbb{E}_{0}$).
\end{axiom}

\begin{axiom}
\label{Sectorial}For each $\widehat{\mathbf{g}}\in\mathbb{G}_{\alpha
}=\mathbb{G}_{\beta}\cap\mathbb{X}_{\alpha}$, $\widehat{\mathbf{L}}%
_{\widehat{\mathbf{g}}}|_{\mathbb{E}_{1}}\in\mathcal{L}(\mathbb{E}%
_{1},\mathbb{E}_{0})$ generates a strongly-continuous analytic semigroup on
$\mathcal{L}(\mathbb{E}_{0},\mathbb{E}_{0})$. (Observe that for $\widehat
{\mathbf{g}}\in\mathbb{G}_{\alpha}$, this implies that $\widehat{\mathbb{D}%
}(\widehat{\mathbf{g}})$ becomes a Banach space when equipped with the graph
norm with respect to $\mathbb{E}_{0}$.)
\end{axiom}

\begin{axiom}
For each $\widehat{\mathbf{g}}\in\mathbb{G}_{\alpha}$, $\mathbf{L}%
_{\widehat{\mathbf{g}}}$ is the part of $\widehat{\mathbf{L}}_{\widehat
{\mathbf{g}}}$ in $\mathbb{X}_{0}$.\footnote{If $\mathbb{X}$ is a Banach space
with subspace $\mathbb{Y}$ and $L:D(L)\subseteq\mathbb{X}\rightarrow
\mathbb{X}$ is linear, then $L^{\mathbb{Y}}$, \emph{the part of }$L$\emph{ in
}$\mathbb{Y}$, is defined by the action $L^{\mathbb{Y}}:x\mapsto Lx$ on the
domain $D(L^{\mathbb{Y}})=\{x\in D(L):Lx\in\mathbb{Y\}}$.}
\end{axiom}

\begin{axiom}
\label{CIspaces}For each $\widehat{\mathbf{g}}\in\mathbb{G}_{\alpha}$, there
exists $\theta\in(0,1)$ such that $\mathbb{X}_{0}\cong(\mathbb{E}_{0}%
,\widehat{\mathbb{D}}(\widehat{\mathbf{g}}))_{\theta}$ and $\mathbb{X}%
_{1}\cong(\mathbb{E}_{0},\widehat{\mathbb{D}}(\widehat{\mathbf{g}}%
))_{1+\theta}$, where $(\mathbb{E}_{0},\widehat{\mathbb{D}}(\widehat
{\mathbf{g}}))_{1+\theta}=\{\mathbf{g}\in\widehat{\mathbb{D}}(\widehat
{\mathbf{g}}):\widehat{\mathbf{L}}_{\widehat{\mathbf{g}}}(\mathbf{g}%
)\in(\mathbb{E}_{0},\widehat{\mathbb{D}}(\widehat{\mathbf{g}}))_{\theta}\}$ as
a set, endowed with the graph norm of $\widehat{\mathbf{L}}_{\widehat
{\mathbf{g}}}$ with respect to $(\mathbb{E}_{0},\widehat{\mathbb{D}}%
(\widehat{\mathbf{g}}))_{\theta}$.\footnote{To see that this assumption is
equivalent to condition (\textsc{ii}) in \cite[Section~4]{Simonett95}, recall
the fact (see e.g. \cite[Propositions~2.2.2 and 2.2.4]{Lunardi95}) that if $A$
is the generator of a $C_{0}$ analytic semigroup in a Banach space
$\mathbb{X}$ and $0<\theta<1$, then $(X,\mathbb{D}(A))_{\theta}\cong%
\mathbb{D}_{A}(\theta)=\{x\in\mathbb{X}:\lim_{t\searrow0}t^{-\theta}%
(e^{tA}x-x)=0\}$.}
\end{axiom}

\begin{axiom}
\label{SimLast}$\mathbb{E}_{1}\hookrightarrow\mathbb{X}_{\beta}\hookrightarrow
\mathbb{E}_{0}$ is a continuous and dense inclusion such that there exist
$C>0$ and $\delta\in\left(  0,1\right)  $ such that for all $\eta\in
\mathbb{E}_{1}$, one has
\[
\left\Vert \eta\right\Vert _{\mathbb{X}_{\beta}}\leq C\left\Vert
\eta\right\Vert _{\mathbb{E}_{0}}^{1-\delta}\left\Vert \eta\right\Vert
_{\mathbb{E}_{1}}^{\delta}.
\]

\end{axiom}

Simonett obtains (a more general version of) the following result:

\begin{theorem}
[Simonett]\label{SimonettRaw}Let $\mathbf{L}_{\mathbf{g}}^{\mathbb{C}}$ denote
the complexification of the linearization $\mathbf{L}_{\mathbf{g}}$ of
(\ref{AQPE}) at a stationary solution $\mathbf{g}$ of (\ref{AQPE}%
).\footnote{Note that $\mathbf{L}_{\mathbf{g}}$ is the operator that appears
in Assumption~2.} Suppose there exists $\lambda_{\mathrm{s}}>0$ such that the
spectrum $\mathbf{\sigma}$ of $\mathbf{L}_{\mathbf{g}}^{\mathbb{C}}$ admit the
decomposition $\sigma=\sigma_{\mathrm{s}}\cup\{0\}$, where $0\ $is an
eigenvalue of finite multiplicity and $\sigma_{\mathrm{s}}\subseteq\left\{
z:\operatorname{Re}z\leq-\lambda_{\mathrm{s}}\right\}  $. If
Assumptions~\ref{SimFirst}--\ref{SimLast} hold, then:

\begin{enumerate}
\item For each $\alpha\in\lbrack0,1]$, there is a direct-sum decomposition
$\mathbb{X}_{\alpha}=\mathbb{X}_{\alpha}^{\mathrm{s}}\oplus\mathbb{X}_{\alpha
}^{\mathrm{c}}$, where $\mathbb{X}_{\alpha}^{\mathrm{c}}$ is the
finite-dimensional algebraic eigenspace corresponding to the null eigenvalue
of $\mathbf{L}_{\mathbf{g}}^{\mathbb{C}}$.

\item For each $r\in\mathbb{N}$, there exists $d_{r}>0$ such that for all
$d\in(0,d_{r}]$, there exists a bounded $C^{r}$ map $\gamma_{d}^{r}%
:B(\mathbb{X}_{1}^{\mathrm{c}},\mathbf{g},d)\rightarrow\mathbb{X}%
_{1}^{\mathrm{s}}$ such that $\gamma_{d}^{r}(\mathbf{g})=0$ and $D\gamma
_{d}^{r}(\mathbf{g})=0$. The image of $\gamma_{d}^{r}$ lies in the closed ball
$\bar{B}(\mathbb{X}_{1}^{\mathrm{s}},\mathbf{g},d)$. Its graph is a local
$C^{r}$ center manifold $\Gamma_{\mathrm{loc}}^{r}=\{(h,\gamma_{d}%
^{r}(h)):h\in B(\mathbb{X}_{1}^{\mathrm{c}},\mathbf{g},d)\}\subset
\mathbb{X}_{1}$ satisfying $T_{\mathbf{g}}\Gamma_{\mathrm{loc}}^{r}%
\cong\mathbb{X}_{1}^{\mathrm{c}}$. Moreover, $\Gamma_{\mathrm{loc}}^{r}$ is
invariant for solutions of (\ref{AQPE}) as long as they remain in
$B(\mathbb{X}_{1}^{\mathrm{c}},\mathbf{g},d)\times B(\mathbb{X}_{1}%
^{\mathrm{s}},0,d)$.

\item Fix $\lambda\in(0,\lambda_{\mathrm{s}})$. Then for each $\alpha\in
(0,1)$, there exist $C>0$ and $d\in(0,d_{r}]$ such that for each initial datum
$\widetilde{\mathbf{g}}(0)\in B(\mathbb{X}_{\alpha},\mathbf{g},d)$ and all
times $\tau\geq0$ such that $\widetilde{\mathbf{g}}(\tau)\in B(\mathbb{X}%
_{\alpha},\mathbf{g},d)$, the center manifold $\Gamma_{\mathrm{loc}}^{r}$ is
exponentially attractive in the stronger space $\mathbb{X}_{1}$ in the sense
that
\[
\left\Vert \pi^{\mathrm{s}}\widetilde{\mathbf{g}}(\tau)-\gamma_{d}^{r}%
(\pi^{\mathrm{c}}\widetilde{\mathbf{g}}(\tau))\right\Vert _{\mathbb{X}_{1}%
}\leq\frac{C_{\alpha}}{\tau^{1-\alpha}}e^{-\lambda\tau}\left\Vert
\pi^{\mathrm{s}}\widetilde{\mathbf{g}}(0)-\gamma_{d}^{r}(\pi^{\mathrm{c}%
}\widetilde{\mathbf{g}}(0))\right\Vert _{\mathbb{X}_{\alpha}}.
\]
Here, $\widetilde{\mathbf{g}}(\tau)$ is the unique solution of (\ref{AQPE}),
while $\pi^{\mathrm{s}}$ and $\pi^{\mathrm{c}}$ denote the projections onto
$\mathbb{X}_{\alpha}^{\mathrm{s}}\cong(\mathbb{X}_{1}^{\mathrm{s}}%
,\mathbb{X}_{0}^{\mathrm{s}})_{\alpha}$ and $\mathbb{X}_{\alpha}^{\mathrm{c}}%
$, respectively.
\end{enumerate}
\end{theorem}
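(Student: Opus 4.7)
The plan is to follow the strategy of Simonett \cite{Simonett95}, which itself builds on the Da Prato--Grisvard maximal regularity theory \cite{DG79} and the Da Prato--Lunardi center manifold construction \cite{DL85}. Since the result is essentially cited from \cite{Simonett95}, my task is to indicate the structure of the argument, organizing it around three ingredients: (i) a good semigroup calculus in the chosen interpolation scale, (ii) spectral projections yielding the direct sum decomposition, and (iii) a Lyapunov--Perron fixed-point argument for the invariant manifold with maximal-regularity estimates giving exponential attractivity.

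First I would set up the linear machinery. By Assumption~\ref{Sectorial}, $\widehat{\mathbf{L}}_{\mathbf{g}}|_{\mathbb{E}_1}$ generates an analytic $C_0$-semigroup on $\mathbb{E}_0$. The spectral assumption $\sigma=\sigma_{\mathrm{s}}\cup\{0\}$ with $0$ of finite multiplicity lets me build the Riesz--Dunford projections $\pi^{\mathrm{c}}$ and $\pi^{\mathrm{s}}$ via contour integrals around $\{0\}$ and $\sigma_{\mathrm{s}}$, respectively. Because the spectral projections commute with $\widehat{\mathbf{L}}_{\mathbf{g}}^{\mathbb{C}}$, they restrict compatibly to each space in the hierarchy (\ref{BS}), using Assumption~\ref{CIspaces} to identify $\mathbb{X}_0$ and $\mathbb{X}_1$ as continuous interpolation spaces between $\mathbb{E}_0$ and $\widehat{\mathbb{D}}(\mathbf{g})$. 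This gives the decomposition $\mathbb{X}_\alpha=\mathbb{X}_\alpha^{\mathrm{s}}\oplus\mathbb{X}_\alpha^{\mathrm{c}}$ of claim~(1), with $\mathbb{X}_\alpha^{\mathrm{c}}$ finite-dimensional.

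Next I would construct the local center manifold. Writing $\widetilde{\mathbf{g}}=\mathbf{g}+v+w$ with $v=\pi^{\mathrm{c}}(\widetilde{\mathbf{g}}-\mathbf{g})$ and $w=\pi^{\mathrm{s}}(\widetilde{\mathbf{g}}-\mathbf{g})$, the equation (\ref{AQPE}) decouples at linear order into an ODE on the finite-dimensional $\mathbb{X}_1^{\mathrm{c}}$ and a parabolic equation on $\mathbb{X}_1^{\mathrm{s}}$, coupled through the nonlinear remainder $\mathbf{N}(v,w)=\mathbf{Q}(\mathbf{g}+v+w)-\mathbf{L}_{\mathbf{g}}(v+w)$, which is smooth thanks to the $C^k$ dependence in Assumption~2 together with a suitable cut-off $\chi_d$ supported in $B(\mathbb{X}_1,\mathbf{g},d)$. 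Looking for an invariant graph $w=\gamma_d^r(v)$, one rewrites the stable equation in Lyapunov--Perron form,
\begin{equation*}
w(\tau)=\int_{-\infty}^{\tau}e^{(\tau-s)\mathbf{L}_{\mathbf{g}}|_{\mathbb{X}_1^{\mathrm{s}}}}\,\pi^{\mathrm{s}}\mathbf{N}(v(s),w(s))\,ds,
\end{equation*}
and solves it by contraction on the Banach space of exponentially weighted continuous functions $\mathbb{R}\to\mathbb{X}_1^{\mathrm{s}}$. The maximal regularity estimate supplied by Assumption~\ref{CIspaces} ensures the convolution actually lands in $\mathbb{X}_1^{\mathrm{s}}$ with the correct norm control, and smoothness of $\gamma_d^r$ up to order $r$ follows by iterating the fiber-contraction principle, with $d_r$ shrinking because the contraction constant degrades as $r$ increases.

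Finally I would prove the exponential attractivity asserted in claim~(3). The key estimate is a smoothing bound of the form $\|e^{\tau\mathbf{L}_{\mathbf{g}}|_{\mathbb{X}_0^{\mathrm{s}}}}\|_{\mathcal{L}(\mathbb{X}_\alpha^{\mathrm{s}},\mathbb{X}_1^{\mathrm{s}})}\leq C\tau^{-(1-\alpha)}e^{-\lambda\tau}$, valid on the stable subspace because $\sigma_{\mathrm{s}}\subset\{\operatorname{Re}z\leq-\lambda_{\mathrm{s}}\}$ and $\lambda<\lambda_{\mathrm{s}}$; here the interpolation inequality of Assumption~\ref{SimLast} controls $\mathbb{X}_\beta$ by $\mathbb{E}_0$ and $\mathbb{E}_1$, which is what makes the quasilinear perturbation acceptable. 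Applying this to the difference $\pi^{\mathrm{s}}\widetilde{\mathbf{g}}(\tau)-\gamma_d^r(\pi^{\mathrm{c}}\widetilde{\mathbf{g}}(\tau))$, which satisfies a parabolic equation on $\mathbb{X}_1^{\mathrm{s}}$ with nonlinearity vanishing to second order on the center manifold, a Gronwall argument in the exponentially weighted norm yields the claimed decay. The main obstacle is the interplay between the quasilinear structure and the non-reflexive interpolation spaces: one must check that the $C^k$ dependence $\widehat{\mathbf{g}}\mapsto\mathbf{L}_{\widehat{\mathbf{g}}}$ of Assumption~2 is strong enough to absorb the $\gamma_d^r$ construction, and that the projections $\pi^{\mathrm{c}}$, $\pi^{\mathrm{s}}$ (defined using $\mathbf{L}_{\mathbf{g}}$) remain compatible with the nonlinearity evaluated at nearby $\widehat{\mathbf{g}}$.
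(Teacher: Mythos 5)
The paper offers no proof of this theorem: it is quoted (as a special case of a more general result) from Simonett \cite{Simonett95}, and your outline --- Riesz spectral projections compatible with the interpolation scale, a Lyapunov--Perron/fiber-contraction construction of the graph $\gamma_{d}^{r}$ using Da Prato--Grisvard maximal regularity, and the stable-subspace smoothing estimate $\left\Vert e^{\tau\mathbf{L}}\right\Vert _{\mathcal{L}(\mathbb{X}_{\alpha}^{\mathrm{s}},\mathbb{X}_{1}^{\mathrm{s}})}\leq C\tau^{-(1-\alpha)}e^{-\lambda\tau}$ for attractivity --- is essentially a faithful sketch of that reference's argument, so you take the same approach as the paper's (cited) proof. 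The one detail to adjust is that the cut-off should act only in the finite-dimensional center direction rather than on a ball in $\mathbb{X}_{1}$, since smooth bump functions are not available on little-H\"older spaces, which is how \cite{Simonett95} in fact proceeds.
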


\subsection{Prerequisites for application of the theorem\label{Setup}}

We now establish the context in which we apply Theorem~\ref{SimonettRaw}.

Let $\widetilde{\mathbf{g}}_{\tau}=\mathbf{Q}(\widetilde{\mathbf{g}})$ denote
the rescaled $\mathbb{R}^{N}$-invariant Ricci flow system (\ref{RRFS}),
modified by DeTurck diffeomorphisms with respect to a background metric
$\mathbf{\underline{\mathbf{g}}}$ chosen as in Sections~\ref{KappaFlow}%
--\ref{MTRF} below. We assume that $\mathbf{g}$ is a smooth stationary
solution, namely that $\mathbf{g}_{\tau}=\mathbf{Q}(\mathbf{g})=0$.

For fixed $0<\sigma<\rho<1$, consider the following nested spaces:%
\begin{equation}%
\begin{array}
[c]{ccc}%
\mathbb{E}_{0} & = & \Sigma_{0}^{0+\sigma}(\mathcal{Y})\times\Sigma
_{1}^{0+\sigma}(\mathcal{Y})\times\Sigma_{2}^{0+\sigma}(\mathcal{Y})\\
\cup &  & \\
\mathbb{X}_{0} & = & \Sigma_{0}^{0+\rho}(\mathcal{Y})\times\Sigma_{1}^{0+\rho
}(\mathcal{Y})\times\Sigma_{2}^{0+\rho}(\mathcal{Y})\\
\cup &  & \\
\mathbb{\mathbb{E}}_{1} & = & \Sigma_{0}^{2+\sigma}(\mathcal{Y})\times
\Sigma_{1}^{2+\sigma}(\mathcal{Y})\times\Sigma_{2}^{2+\sigma}(\mathcal{Y})\\
\cup &  & \\
\mathbb{X}_{1} & = & \Sigma_{0}^{2+\rho}(\mathcal{Y})\times\Sigma_{1}^{2+\rho
}(\mathcal{Y})\times\Sigma_{2}^{2+\rho}(\mathcal{Y})
\end{array}
. \label{DefineNestedSpaces}%
\end{equation}

For fixed $1/2\leq\beta<\alpha<1$ and $\varepsilon>0$ to be chosen, define%
\begin{equation}
\mathbb{X}_{\beta}=(\mathbb{X}_{0},\mathbb{X}_{1})_{\beta}\qquad
\text{and}\qquad\mathbb{X}_{\alpha}=(\mathbb{X}_{0},\mathbb{X}_{1})_{\alpha}
\label{YetMoreInterpolationSpaces}%
\end{equation}
and%
\begin{equation}
\mathbb{G}_{\beta}=B(\mathbb{X}_{\beta},\mathbf{g},\varepsilon)\qquad
\text{and}\qquad\mathbb{G}_{\alpha}=\mathbb{G}_{\beta}\cap\mathbb{X}_{\alpha}.
\label{SmallBalls}%
\end{equation}

For each $\mathbf{\hat{g}}\in\mathbb{G}_{\beta}$, let $\widehat{\mathbf{L}%
}_{\widehat{\mathbf{g}}}$ denote the linearization of $\mathbf{Q}$ at
$\mathbf{\hat{g}}$, regarded as an unbounded linear operator on $\mathbb{E}%
_{0}$ with dense domain $\widehat{\mathbb{D}}(\widehat{\mathbf{g}}%
)=\mathbb{E}_{1}$. Let $\mathbf{L}_{\mathbf{\hat{g}}}$ denote the
corresponding operator on $\mathbb{X}_{0}$ with dense domain $\mathbb{D}%
(\mathbf{L}_{\widehat{\mathbf{g}}})=\mathbb{X}_{1}$.

Recall the following:

\begin{definition}
\label{DefineSectorial}A densely-defined linear operator $\mathbf{L}$ on a
complex Banach space $\mathbb{X}\neq\{0\}$ is said to be \emph{sectorial} if
there exist $\alpha\in(\pi,2\pi)$, $\omega\in\mathbb{R}$, and $C>0$ such that
the \textquotedblleft sector\textquotedblright%
\begin{equation}
S_{\alpha,\omega}=\{\lambda\in\mathbb{C}:\lambda\neq\omega,\ \left\vert
\arg(\lambda-\omega)\right\vert <\alpha\}
\end{equation}
is contained in the resolvent set $\rho(\mathbf{L})$, and such that%
\begin{equation}
\left\Vert (\lambda\mathbf{I}-\mathbf{L})^{-1}\right\Vert _{\mathcal{L}%
(\mathbb{X},\mathbb{X})}\leq\frac{C}{\left\vert \lambda-\omega\right\vert }
\label{Kato}%
\end{equation}
holds for all $\lambda\in S_{\alpha,\omega}\subseteq\rho(\mathbf{L})$.
\end{definition}

The following lemmas verify the technical hypotheses needed for
Theorem~\ref{SimonettRaw}.

\medskip

\begin{lemma}
~\label{Maps}

\begin{enumerate}
\item $\mathbf{\hat{g}}\mapsto\mathbf{L}_{\mathbf{\hat{g}}}$ is an analytic
map $\mathbb{G}_{\beta}\rightarrow\mathcal{L}(\mathbb{X}_{1},\mathbb{X}_{0})$.

\item $\mathbf{\hat{g}}\mapsto$ $\widehat{\mathbf{L}}_{\widehat{\mathbf{g}}}$
is an analytic map $\mathbb{G}_{\alpha}\rightarrow\mathcal{L}(\mathbb{E}%
_{1},\mathbb{E}_{0})$.

\item If $\widehat{\mathbf{L}}_{\mathbf{g}}^{\mathbb{C}}$ is sectorial, then
there exists $\varepsilon>0$ such that for all $\mathbf{\hat{g}}$ in the set
$\mathbb{G}_{\alpha}$ defined by (\ref{SmallBalls}), $\widehat{\mathbf{L}%
}_{\mathbf{\hat{g}}}^{\mathbb{C}}$ is the infinitesimal generator of an
analytic $C_{0}$-semigroup on $\mathcal{L}(\mathbb{E}_{0},\mathbb{E}_{0})$.
\end{enumerate}
\end{lemma}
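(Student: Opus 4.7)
The plan is to exploit the quasilinear structure of (\ref{RRFS}) modified by DeTurck diffeomorphisms. The nonlinear operator $\mathbf{Q}$ depends rationally on $\hat{\mathbf{g}}=(\hat{g},\hat A,\hat G)$ (poles appearing only where $\hat{g}$ or $\hat G$ degenerate) and polynomially in the first and second derivatives of $\hat{\mathbf{g}}$. Consequently, each component of the linearization $\mathbf{L}_{\hat{\mathbf{g}}}(\mathbf{h})$ is linear in $\mathbf{h}$ and its first two derivatives, with coefficients that are rational functions of $\hat{\mathbf{g}}$ and polynomial in $D\hat{\mathbf{g}}$. The whole argument reduces to tracking these coefficients in appropriate little-H\"older spaces.

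For parts~(1) and~(2), the key tools are: (a) pointwise multiplication defines a continuous bilinear map $\mathfrak{h}^{r+\rho}\times\mathfrak{h}^{r+\rho}\to\mathfrak{h}^{r+\rho}$ for any $r\in\mathbb{N}$ and $\rho\in(0,1)$, giving $\mathfrak{h}^{r+\rho}$ a Banach algebra structure; and (b) inversion is analytic on the open set of invertible elements in any Banach algebra. Since $\beta\ge 1/2$, the interpolation isomorphism (\ref{InterpolationIsomorphism}) implies that $\mathbb{X}_\beta$ embeds continuously into a product of little-H\"older spaces of exponent strictly greater than $1$; in particular, elements of $\mathbb{X}_\beta$ have H\"older-continuous first derivatives. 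After shrinking $\varepsilon>0$ so that every $\hat{\mathbf{g}}\in\mathbb{G}_\beta$ has $\hat g$ and $\hat G$ (or $\hat u$) uniformly positive definite, the rational coefficients of $\mathbf{L}_{\hat{\mathbf{g}}}$ are well-defined and depend analytically on $\hat{\mathbf{g}}\in\mathbb{G}_\beta$ when viewed as $\mathbb{X}_0$-valued multipliers. Composing with the second-derivative map $\mathbb{X}_1\to\mathbb{X}_0$ yields part~(1). Replacing $\rho$ by $\sigma$ and working in $\mathbb{G}_\alpha$ gives part~(2); the restriction from $\mathbb{G}_\beta$ to $\mathbb{G}_\alpha$ is needed only to guarantee the coefficients lie in $\mathfrak{h}^{0+\sigma}$ rather than $\mathfrak{h}^{0+\rho}$, which follows from the stronger regularity afforded by $\alpha>\beta$.

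For part~(3), the mechanism is perturbation of sectoriality. The standard fact (see, e.g., Lunardi or Pazy) is that if $\widehat{\mathbf{L}}^{\mathbb{C}}_{\mathbf{g}}:\mathbb{E}_1\to\mathbb{E}_0$ is sectorial, then any bounded linear perturbation $B:\mathbb{E}_1\to\mathbb{E}_0$ with $\|B\|_{\mathcal{L}(\mathbb{E}_1,\mathbb{E}_0)}$ smaller than a constant depending only on the sectoriality constants leaves $\widehat{\mathbf{L}}^{\mathbb{C}}_{\mathbf{g}}+B$ sectorial with comparable constants. By the analyticity (hence continuity) proved in part~(2), the map $\hat{\mathbf{g}}\mapsto\widehat{\mathbf{L}}^{\mathbb{C}}_{\hat{\mathbf{g}}}-\widehat{\mathbf{L}}^{\mathbb{C}}_{\mathbf{g}}$ is continuous into $\mathcal{L}(\mathbb{E}_1,\mathbb{E}_0)$ and vanishes at $\mathbf{g}$; choosing $\varepsilon$ sufficiently small forces this difference to lie in the perturbative regime throughout $\mathbb{G}_\alpha$. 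By a classical result (e.g., \cite{Lunardi95}, Proposition~2.1.1), sectoriality is equivalent to generation of an analytic $C_0$-semigroup, giving the conclusion.

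The only genuine subtlety is step~(1)/(2): one must verify that the \emph{coefficient-valued} map $\hat{\mathbf{g}}\mapsto\hat g^{-1},\ \hat G^{-1}$ is analytic with values in the appropriate little-H\"older space, rather than merely pointwise. This is immediate from the Banach algebra property of $\mathfrak{h}^{r+\rho}$ together with the general fact that inversion is analytic in any Banach algebra; everything else is a mechanical unwinding of the quasilinear structure of (\ref{RRFS}).
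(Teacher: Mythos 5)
Your proposal is essentially the paper's own argument: for statement~(3) the paper uses exactly the same perturbation mechanism you describe (choosing $\varepsilon$ so that $\Vert\widehat{\mathbf{L}}_{\mathbf{\hat{g}}}^{\mathbb{C}}-\widehat{\mathbf{L}}_{\mathbf{g}}^{\mathbb{C}}\Vert_{\mathcal{L}(\mathbb{E}_{1},\mathbb{E}_{0})}$ is below a threshold determined by the sectoriality constant, citing \cite[Proposition~2.4.2]{Lunardi95}, with strong continuity coming from the dense domain), and for statements~(1)--(2) it simply cites Lemma~3.3 of \cite{GIK02}, whose content is precisely the Banach-algebra/analytic-inversion argument you sketch for the coefficients, which involve $\mathbf{\hat{g}}$ and at most one derivative of $\mathbf{\hat{g}}$. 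One harmless slip: since $\sigma<\rho$ one has $\mathfrak{h}^{0+\rho}\hookrightarrow\mathfrak{h}^{0+\sigma}$, so statement~(2) requires no extra coefficient regularity beyond what statement~(1) already uses --- your explanation that the restriction to $\mathbb{G}_{\alpha}$ is needed to get coefficients into $\mathfrak{h}^{0+\sigma}$ runs this inclusion backwards, but the conclusion on $\mathbb{G}_{\alpha}\subset\mathbb{G}_{\beta}$ then holds a fortiori, so nothing is lost.
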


\begin{proof}
Statements~(1)--(2) are proved in Lemma~3.3 of \cite{GIK02}.

To prove statement~(3), first observe that $\widehat{\mathbf{L}}_{\mathbf{g}%
}^{\mathbb{C}}$ generates an analytic $C_{0}$-semigroup on $\mathcal{L}%
(\mathbb{E}_{0},\mathbb{E}_{0})$: it is a standard fact that a sectorial
operator generates an analytic semigroup; one knows that semigroup is strongly
continuous because $\widehat{\mathbf{L}}_{\mathbf{g}}^{\mathbb{C}}$ is densely
defined by construction. Now if $\mathbf{\hat{g}}\in\mathbb{G}_{\alpha}$, then
by statement~(2), we can choose $\varepsilon>0$ small enough so that
\[
\left\Vert \widehat{\mathbf{L}}_{\mathbf{\hat{g}}}^{\mathbb{C}}-\widehat
{\mathbf{L}}_{\mathbf{g}}^{\mathbb{C}}\right\Vert _{\mathcal{L}(\mathbb{E}%
_{1},\mathbb{E}_{0})}<\frac{1}{C+1},
\]
where $C>0$ is the constant in (\ref{Kato}) corresponding to $\widehat
{\mathbf{L}}_{\mathbf{g}}^{\mathbb{C}}$. As is well known, this implies that
$\widehat{\mathbf{L}}_{\mathbf{\hat{g}}}^{\mathbb{C}}$ is sectorial as well.
(See \cite[Proposition~2.4.2]{Lunardi95}, for example.) Statement~(3) follows easily.
\end{proof}

\begin{lemma}
\label{JustDoIt}If $\widehat{\mathbf{L}}_{\mathbf{g}}^{\mathbb{C}}$ is
sectorial, then the choices made in (\ref{DefineNestedSpaces}%
)--(\ref{SmallBalls}) ensure that Assumptions~\ref{SimFirst}--\ref{SimLast}
are satisfied for the system $\widetilde{\mathbf{g}}_{\tau}=\mathbf{Q}%
(\widetilde{\mathbf{g}})$ given in (\ref{RRFS}).
\end{lemma}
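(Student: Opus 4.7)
The plan is to verify Assumptions~\ref{SimFirst}--\ref{SimLast} one by one, noting that the carefully chosen nested structure in (\ref{DefineNestedSpaces})--(\ref{SmallBalls}) together with the interpolation isomorphism (\ref{InterpolationIsomorphism}) for little-Hölder spaces and Lemma~\ref{Maps} does most of the work. Assumptions~\ref{SimFirst}--\ref{Sectorial} and the set-theoretic content of Assumption~5 are essentially immediate from our construction; the substantive points are the identification of continuous interpolation spaces in Assumption~\ref{CIspaces} and the interpolation inequality in Assumption~\ref{SimLast}.

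First I would dispatch Assumptions~\ref{SimFirst}--\ref{Sectorial}. Assumption~\ref{SimFirst} follows from standard density and continuity properties of little-Hölder spaces together with the definition (\ref{YetMoreInterpolationSpaces}) of $\mathbb{X}_\beta$ and $\mathbb{X}_\alpha$. Assumption~2 is Lemma~\ref{Maps}(1), which gives analyticity (hence $C^k$ for every $k$) of $\widehat{\mathbf{g}}\mapsto \mathbf{L}_{\widehat{\mathbf{g}}}$ on $\mathbb{G}_\beta$. Assumption~3 is immediate from our choice $\widehat{\mathbb{D}}(\widehat{\mathbf{g}})=\mathbb{E}_1$. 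Assumption~\ref{Sectorial} is precisely Lemma~\ref{Maps}(3), which uses the hypothesis that $\widehat{\mathbf{L}}_{\mathbf{g}}^{\mathbb{C}}$ is sectorial; the $\varepsilon$ in (\ref{SmallBalls}) must be taken small enough for the perturbation argument there to apply. Assumption~5 holds because, by construction, $\mathbf{L}_{\widehat{\mathbf{g}}}$ and $\widehat{\mathbf{L}}_{\widehat{\mathbf{g}}}$ arise from the same differential expression, so $\mathbf{L}_{\widehat{\mathbf{g}}}$ is manifestly the part of $\widehat{\mathbf{L}}_{\widehat{\mathbf{g}}}$ in $\mathbb{X}_0$.

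The core step is Assumption~\ref{CIspaces}. The key observation is that, because $\mathbf{Q}$ is a DeTurck-modified Ricci flow system, the operator $\widehat{\mathbf{L}}_{\widehat{\mathbf{g}}}$ is uniformly strongly elliptic of second order on $\mathcal{Y}$ for all $\widehat{\mathbf{g}}\in\mathbb{G}_\alpha$ (after taking $\varepsilon$ small), so by standard Schauder estimates its graph norm with respect to $\mathbb{E}_0=\Sigma_\ast^{\sigma}(\mathcal{Y})$ is equivalent to the $\Sigma_\ast^{2+\sigma}$ norm; thus $\widehat{\mathbb{D}}(\widehat{\mathbf{g}})=\mathbb{E}_1$ as Banach spaces. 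Setting $\theta=(\rho-\sigma)/2\in(0,1)$, the interpolation isomorphism (\ref{InterpolationIsomorphism}) yields
\[
(\mathbb{E}_0,\widehat{\mathbb{D}}(\widehat{\mathbf{g}}))_\theta
=(\mathfrak{h}^{\sigma},\mathfrak{h}^{2+\sigma})_\theta
\cong \mathfrak{h}^{2\theta+\sigma}=\mathfrak{h}^{\rho}\cong \mathbb{X}_0,
\]
while the elliptic regularity shift identifies $(\mathbb{E}_0,\widehat{\mathbb{D}}(\widehat{\mathbf{g}}))_{1+\theta}=\{\mathbf{g}\in\mathbb{E}_1:\widehat{\mathbf{L}}_{\widehat{\mathbf{g}}}\mathbf{g}\in\mathfrak{h}^{\rho}\}$ with $\mathfrak{h}^{2+\rho}\cong\mathbb{X}_1$. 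One has to be slightly careful to choose $\sigma,\rho$ so that $\rho\notin\mathbb{N}$ and $2+\rho\notin\mathbb{N}$, which is automatic from $\rho\in(0,1)$.

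Finally, for Assumption~\ref{SimLast}, a second application of (\ref{InterpolationIsomorphism}) gives $\mathbb{X}_\beta\cong\mathfrak{h}^{2\beta+\rho}$, and since $\sigma<\rho$ and $\beta<1$ one has $\sigma<2\beta+\rho<2+\sigma$, so $\mathbb{E}_1\hookrightarrow\mathbb{X}_\beta\hookrightarrow\mathbb{E}_0$ are continuous and dense inclusions; the required interpolation inequality is (\ref{InterpolationNorm}) with $\delta=\beta+(\rho-\sigma)/2\in(0,1)$ after noting $2\beta+\rho=(1-\delta)\sigma+\delta(2+\sigma)$. The main obstacle, as anticipated, lies not in any single assumption but in the bookkeeping of ensuring all relevant Hölder indices avoid integers so that (\ref{InterpolationIsomorphism}) applies at each stage; this is handled by the restrictions $\rho,\sigma\in(0,1)$ and by reserving freedom to shrink $\rho-\sigma$ if necessary, together with choosing $\beta\in[1/2,1)$ generic. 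Once these constraints are met, all seven assumptions hold and Theorem~\ref{SimonettRaw} applies.
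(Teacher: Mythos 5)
Your verification matches the paper's own proof in essentially every step: Assumptions 1, 3, and 5 by construction, Assumptions 2 and 4 via Lemma~\ref{Maps} and the sectoriality hypothesis, Assumption 6 via the same choice $\theta=(\rho-\sigma)/2$ in (\ref{InterpolationIsomorphism}) together with Schauder equivalence of the graph norm with the $\mathbb{E}_1$ norm, and Assumption 7 from (\ref{InterpolationNorm}). You are in fact slightly more explicit than the paper about the index bookkeeping (e.g.\ the constraint $\beta<1-(\rho-\sigma)/2$ needed so that $\mathbb{X}_\beta$ sits between $\mathbb{E}_0$ and $\mathbb{E}_1$), which you handle correctly by the allowed choice of $\sigma$, $\rho$, $\beta$.
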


\begin{proof}
Assumption~1 holds by construction. Assumption~2 follows directly from
statement~(2) of Lemma~\ref{Maps}, because the \textquotedblleft off-diagonal
terms\textquotedblright\ in (\ref{RRFS}) are all contractions involving at
most one derivative of $\widetilde{\mathbf{g}}$. Assumptions~3 and 5 hold by
construction of $\widehat{\mathbf{L}}_{\mathbf{\hat{g}}}$ and $\mathbf{L}%
_{\mathbf{\hat{g}}}$. Assumption~\ref{Sectorial} is a consequence of the
hypothesis on $\widehat{\mathbf{L}}_{\mathbf{g}}^{\mathbb{C}}$ and
statement~(3) of Lemma~\ref{Maps}. To verify Assumption~6, first observe that
putting $\theta=\left(  \rho-\sigma\right)  /2\in\left(  0,1\right)  $ into
the isomorphism identity (\ref{InterpolationIsomorphism}) yields
$\mathbb{X}_{0}\cong\left(  \mathbb{E}_{0},\mathbb{E}_{1}\right)  _{\theta}$;
then recall that standard Schauder theory implies that the graph norm of
$\widehat{\mathbf{L}}_{\widehat{\mathbf{g}}}$ with respect to $\mathbb{E}_{0}$
is equivalent to the $\mathbb{E}_{1}$ norm. (So the graph norm of
$\mathbf{L}_{\mathbf{\hat{g}}}$ with respect to $\mathbb{X}_{0}$ is equivalent
to the $\mathbb{X}_{1}$ norm.) Finally, Assumption~7 is implied by
(\ref{InterpolationNorm}).
\end{proof}

\section{The case $N=1$: $\varkappa$-rescaled locally $\mathbb{R}^{1}%
$-invariant Ricci flow\label{KappaFlow}}

To consider evolving Riemannian metrics on a bundle $\mathbb{R}^{1}%
\hookrightarrow\mathcal{M}\overset{\pi}{\mathcal{\longrightarrow}}%
\mathcal{B}^{n}$, it is convenient to change notation. Let $(x^{1}%
,\ldots,x^{n})$ be local coordinates on $\mathcal{B}^{n}$ and let $x^{0}$
denote the coordinate on an $\mathbb{R}^{1}$ fiber. If $(\mathcal{M}%
,\mathbf{\bar{g}}(t):t\in\mathcal{I})$ is a locally $\mathbb{R}^{1}$-invariant
solution (\ref{g-bar}), then there exist a Riemannian metric $\bar{g}$, a
$1$-form $\bar{A}$, and a function $\bar{u}$, all defined on $\mathcal{B}%
^{n}\times\mathcal{I}$, such that we can write $\mathbf{\bar{g}}$ in local
coordinates as
\begin{equation}
\mathbf{\bar{g}}=e^{2\bar{u}}\,dx^{0}\otimes dx^{0}+e^{2\bar{u}}\bar{A}%
_{k}\,(dx^{0}\otimes dx^{k}+dx^{k}\otimes dx^{0})+(e^{2\bar{u}}\bar{A}_{i}%
\bar{A}_{j}+\bar{g}_{ij})\,dx^{i}\otimes dx^{j}. \label{KaluzaKleinMetric}%
\end{equation}
We again abuse notation by writing $\mathbf{\bar{g}}(t)=(\bar{g}(t),\bar
{A}(t),\bar{u}(t))$. In the coordinates of this section, system~(\ref{RFS})
takes the form
\begin{subequations}
\label{N=1Unscaled}%
\begin{align}
\frac{\partial}{\partial t}\bar{g}_{ij}  &  =-2\bar{R}_{ij}+e^{2\bar{u}}%
\bar{g}^{k\ell}(d\bar{A})_{ik}(d\bar{A})_{j\ell}+2\bar{\nabla}_{i}u\bar
{\nabla}_{j}u,\\
\frac{\partial}{\partial t}\bar{A}_{k}  &  =\bar{g}^{ij}\{\bar{\nabla}%
_{i}(d\bar{A})_{jk}+2(\bar{\nabla}_{i}u)(d\bar{A})_{jk}\},\\
\frac{\partial}{\partial t}\bar{u}  &  =\bar{\Delta}u-\frac{1}{4}e^{2\bar{u}%
}|d\bar{A}|^{2}.
\end{align}
We are interested in the stability of metrics of this type on trivial
(i.e.~product) bundles. It turns out that the most practical rescaling depends
on the sign of the curvatures of $(\mathcal{B},g)$. We explain this in
Remarks~\ref{kappa-bad}--\ref{volume-bad} below.

\subsection{$\varkappa$-rescaled flow}

Suppose that $(\mathcal{M},\mathbf{\bar{g}})$ is a Riemannian product over an
Einstein manifold with $2\operatorname*{Rc}(\bar{g}(-\varkappa))=\varkappa
\bar{g}(-\varkappa)$ for some $\varkappa=\pm1$. Then the renormalization in
Example~\ref{Product} yields
\end{subequations}
\begin{subequations}
\label{kappa-rescaled}%
\begin{align}
\frac{\partial}{\partial\tau}g_{ij}  &  =-2R_{ij}+e^{2u}g^{k\ell}%
(dA)_{ik}(dA)_{j\ell}+2\nabla_{i}u\nabla_{j}u+\varkappa g_{ij},\\
\frac{\partial}{\partial\tau}A_{k}  &  =g^{ij}\{\nabla_{i}(dA)_{jk}%
+2(\nabla_{i}u)(dA)_{jk}\}+\frac{\varkappa}{2}A_{k},\\
\frac{\partial}{\partial\tau}u  &  =\Delta u-\frac{1}{4}e^{2u}|dA|^{2}.
\end{align}
We call this system (in which all quantities are computed with respect to $g$)
the $\varkappa$\emph{-rescaled locally $\mathbb{R}^{1}$-invariant Ricci flow.}
It is most satisfactory for studying the case $\varkappa=-1$. (In
Section~\ref{VolumeFlow}, we apply a different rescaling that is more suitable
for the case $\varkappa=+1$.)

\subsection{Linearization at a stationary solution of $\varkappa$-rescaled
flow}

Any Riemannian product $(\mathbb{R}\times\mathcal{B},e^{2u}\,dx^{0}\otimes
dx^{0}+g_{ij}\,dx^{i}\otimes dx^{j})$ with $2\operatorname*{Rc}(g)=\varkappa
g$, $A$ identically zero, and $u$ constant in space is clearly a stationary
solution of (\ref{kappa-rescaled}). Let $(g+h,B,u+v)$ denote a perturbation of
such a fixed point $\mathbf{g}=(g,0,u)$. Define $H=\operatorname*{tr}\!_{g}h$,
and let $\Delta_{\ell}$ denote the Lichnerowicz Laplacian acting on symmetric
$(2,0)$-tensor fields. In coordinates,%
\end{subequations}
\begin{equation}
\Delta_{\ell}h_{ij}=\Delta h_{ij}+2R_{ipqj}h^{pq}-R_{i}^{k}h_{kj}-R_{j}%
^{k}h_{ik}. \label{Lichnerowicz}%
\end{equation}

\begin{lemma}
\label{FirstLinearization}The linearization of (\ref{kappa-rescaled}) at a
fixed point $\mathbf{g}=(g,0,u)$ with $2\operatorname*{Rc}=\varkappa g$ and
$u$ constant acts on $(h,B,v)$ by
\begin{subequations}
\label{kappa-linear-nonparabolic}%
\begin{align}
\frac{\partial}{\partial\tau}h_{ij}  &  =\Delta_{\ell}h_{ij}+\{\nabla
_{i}(\delta h)_{j}+\nabla_{j}(\delta h)_{i}+\nabla_{i}\nabla_{j}H\}+\varkappa
h_{ij},\\
\frac{\partial}{\partial\tau}B_{k}  &  =-(\delta dB)_{k}+\frac{\varkappa}%
{2}B_{k},\\
\frac{\partial}{\partial\tau}v  &  =\Delta v.
\end{align}

\end{subequations}
\end{lemma}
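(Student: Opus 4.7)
The plan is to substitute the perturbation $(g+\varepsilon h,\,\varepsilon B,\,u+\varepsilon v)$ into the three equations of (\ref{kappa-rescaled}) and collect the coefficients of $\varepsilon$ at $\varepsilon=0$, exploiting heavily the facts that $A\equiv 0$ and $\nabla u\equiv 0$ at the fixed point. This is a direct computation: each equation is either linear already (modulo the nonlinear dependence of $R_{ij}$, $\Delta$, and $\delta d$ on $g$) or quadratic in a quantity that vanishes at $\mathbf{g}$, so most of the cross terms will be killed automatically.

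For the metric equation, I would invoke the standard variation formula
\[
\tfrac{d}{d\varepsilon}\big|_{\varepsilon=0} R_{ij}[g+\varepsilon h]
= -\tfrac12\Delta_{\ell}h_{ij} - \tfrac12\nabla_i\nabla_j H
- \tfrac12\bigl(\nabla_i(\delta h)_j+\nabla_j(\delta h)_i\bigr),
\]
where my sign for $\delta$ on symmetric $2$-tensors matches the footnoted convention $(\delta\omega)_\alpha=-\nabla^\beta\omega_{\beta\alpha}$. Multiplying by $-2$ reproduces $\Delta_{\ell}h_{ij}+\nabla_i(\delta h)_j+\nabla_j(\delta h)_i+\nabla_i\nabla_j H$. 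The term $e^{2u}g^{k\ell}(dA)_{ik}(dA)_{j\ell}$ is quadratic in $A$, and $2\nabla_i u\nabla_j u$ is quadratic in $\nabla u$; since both $A$ and $\nabla u$ vanish at $\mathbf{g}$, their first variations contribute nothing. Finally $\varkappa g_{ij}$ varies to $\varkappa h_{ij}$.

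For the connection equation, the coefficient $2\nabla_i u$ of $(dA)_{jk}$ vanishes because $u$ is constant, so only $g^{ij}\nabla_i(dA)_{jk}$ contributes to first order. Varying at $A=0$ kills the linearization of the Christoffel-dependent part of $\nabla_i$, leaving $g^{ij}\nabla_i(dB)_{jk}=-(\delta dB)_k$, to which the zero-order piece $\tfrac{\varkappa}{2}B_k$ is added. For the scalar equation, the same observation shows that varying $\Delta u=g^{ij}(\partial_i\partial_j u-\Gamma_{ij}^k\partial_k u)$ at $u$ constant leaves only $\Delta v$, since every other contribution carries a factor of $\nabla u$; and $\tfrac14 e^{2u}|dA|^2$ is again quadratic in $A$, hence its first variation vanishes.

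I do not anticipate any real obstacle: the only nontrivial input is the textbook linearization of $\operatorname{Rc}$, and everything else is bookkeeping powered by $A=0$ and $\nabla u=0$. The one place to stay alert is the sign convention for $\delta$ on symmetric tensors versus on forms, so that the divergence terms appear with the signs displayed in (\ref{kappa-linear-nonparabolic}a) and the codifferential in (\ref{kappa-linear-nonparabolic}b) is consistent with the footnote in the introduction.
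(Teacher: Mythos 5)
Your proposal is correct and follows essentially the same route as the paper: recall the classical first variation of the Ricci tensor (the paper cites Besse for exactly the formula you write, with the same sign convention for $\delta$), then observe that every term quadratic in $A$ or $\nabla u$ has vanishing first variation at the fixed point since $A=0$ and $u$ is constant, so only the displayed terms survive. The bookkeeping you describe is precisely the ``careful but straightforward application'' the paper invokes.
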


\begin{proof}
For the convenience of the reader, we begin by recalling a few classical
variation formulas (all of which may be found in \cite[Chapter~1,
Section~K]{Besse87}, for instance). Let $\tilde{g}(\varepsilon)$ be a smooth
one-parameter family of Riemannian metrics such that%
\[
\left.  \tilde{g}(0)=g\qquad\text{and}\qquad\frac{\partial}{\partial
\varepsilon}\right\vert _{\varepsilon=0}\tilde{g}=h.
\]
Using tildes to denote geometric quantities associated to $\tilde{g}$ and
undecorated characters to denote quantities associated to $g$, one computes
that%
\begin{align*}
\left.  \frac{\partial}{\partial\varepsilon}\right\vert _{\varepsilon=0}%
\tilde{g}^{ij}  &  =-h^{ij}\quad\left(  =-g^{ik}g^{j\ell}h_{k\ell}\right)  ,\\
\left.  \frac{\partial}{\partial\varepsilon}\right\vert _{\varepsilon=0}%
\tilde{\Gamma}_{ij}^{k}  &  =\frac{1}{2}(\nabla_{i}h_{j}^{k}+\nabla_{j}%
h_{i}^{k}-\nabla^{k}h_{ij}),\\
\left.  \frac{\partial}{\partial\varepsilon}\right\vert _{\varepsilon=0}%
\tilde{R}_{ij}  &  =-\frac{1}{2}\left\{  \Delta_{\ell}h_{ij}+\nabla_{i}(\delta
h)_{j}+\nabla_{j}(\delta h)_{i}+\nabla_{i}\nabla_{j}H\right\}  .
\end{align*}
The lemma follows by careful but straightforward application of these formulas.
\end{proof}

The linear system (\ref{kappa-linear-nonparabolic}) is autonomous but not
quite parabolic. We impose parabolicity by the DeTurck trick \cite{DT83,
DT03}. Fix a background connection $\underline{\Gamma}$ and define a
$1$-parameter family of vector fields $W(\tau)$ along a solution
$\mathbf{g}(\tau)$ of (\ref{kappa-rescaled}) by
\begin{subequations}
\label{DeTurck-Hyperbolic}%
\begin{align}
W^{0}  &  =\delta A,\\
W^{k}  &  =g^{ij}(\Gamma_{ij}^{k}-\underline{\Gamma}_{ij}^{k})\qquad(1\leq
k\leq n).
\end{align}
The solution of the $\varkappa$\emph{-rescaled }$\mathbb{R}^{1}$%
\emph{-invariant Ricci--DeTurck flow} is the $1$-parameter family of metrics
$\psi_{\tau}^{\ast}\mathbf{g}(\tau)$, where the diffeomorphisms $\psi_{\tau}$
are generated by $W(\tau)$, subject to the initial condition $\psi
_{0}=\operatorname*{id}$. In what follows, we take $\underline{\Gamma}$ to be
the Levi-Civita connection of the stationary solution around which we
linearize. Observe that a stationary solution $\mathbf{g}=(g,0,u)$ of
(\ref{kappa-rescaled}) with $2\operatorname*{Rc}=\varkappa g$ and $u$ constant
is then also a stationary solution of the $\varkappa$-rescaled Ricci--DeTurck flow.
\end{subequations}
\begin{lemma}
The linearization of the $\varkappa$-rescaled Ricci--DeTurck flow at a fixed
point $\mathbf{g}=(g,0,u)$ with $2\operatorname*{Rc}=\varkappa g$ is the
autonomous, self-adjoint, strictly parabolic system%
\[
\frac{\partial}{\partial\tau}%
\begin{pmatrix}
h & B & v
\end{pmatrix}
=\mathbf{L}%
\begin{pmatrix}
h & B & v
\end{pmatrix}
=%
\begin{pmatrix}
\mathbf{L}_{2}h & \mathbf{L}_{1}B & \mathbf{L}_{0}v
\end{pmatrix}
,
\]
where
\begin{subequations}
\label{kappa-linear-parabolic}%
\begin{align}
\mathbf{L}_{2}h  &  =\Delta_{\ell}h+\varkappa h,\label{Linear-h}\\
\mathbf{L}_{1}B  &  =\Delta_{1}B+\frac{\varkappa}{2}B,\label{Linear-b}\\
\mathbf{L}_{0}v  &  =\Delta_{0}v. \label{Linear-v}%
\end{align}
Here $-\Delta_{p}=d\delta+\delta d$ denotes the Hodge--de Rham Laplacian
acting on $p$-forms.
\end{subequations}
\end{lemma}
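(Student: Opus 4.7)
The plan is to add the first-order effect of the DeTurck correction to the non-parabolic linearization already computed in Lemma~\ref{FirstLinearization}, and then observe that the resulting operator is block-diagonal, elliptic and symmetric.

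First I would compute the first-order variation of the DeTurck vector field $W$ at the fixed point $\mathbf{g}=(g,0,u)$. Since $\underline{\Gamma}$ is taken to be the Levi-Civita connection of $g$, a perturbation $(g+h,\,B,\,u+v)$ produces $W=0$ at zeroth order, while at first order the base components give the standard DeTurck quantity $\dot W^{k}=g^{k\ell}\nabla^{j}h_{j\ell}-\tfrac12\nabla^{k}H$ (up to the sign of $\delta$ used in the paper). For the fiber component, because $A\equiv0$ at the fixed point, linearization of $W^{0}=\delta A$ yields simply $\dot W^{0}=\delta B$, with no contribution from the variation of the metric used to contract. All this is obtained from the classical variation formulas recalled in the proof of Lemma~\ref{FirstLinearization}.

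Next I would compute the linearized action of $\psi_{\tau}^{\ast}$ on each slot of $\mathbf{g}=(g,A,u)$. On the base-metric slot this adds $\nabla_{i}\dot W_{j}+\nabla_{j}\dot W_{i}$; inserting the expression for $\dot W^{k}$ above and matching signs against the non-parabolic terms $\nabla_{i}(\delta h)_{j}+\nabla_{j}(\delta h)_{i}+\nabla_{i}\nabla_{j}H$ appearing in (\ref{kappa-linear-nonparabolic}), these cancel exactly, leaving $\partial_{\tau}h=\Delta_{\ell}h+\varkappa h$ and establishing (\ref{Linear-h}). On the connection slot, the modification of $A$ by the exact $1$-form $dW^{0}=d(\delta B)$ (cf.~the Lott normalization ``modifications of $\bar A$ by exact forms'') combines with the non-parabolic expression $-(\delta dB)_{k}+\tfrac{\varkappa}{2}B_{k}$ to produce $-(d\delta+\delta d)B+\tfrac{\varkappa}{2}B=\Delta_{1}B+\tfrac{\varkappa}{2}B$, giving (\ref{Linear-b}). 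Finally, on the scalar slot $\mathcal{L}_{W}u=W^{k}\nabla_{k}u$ vanishes identically because $u$ is constant at the fixed point, and the $|dA|^{2}$ nonlinearity is quadratic in $dA$, so the linearization of the $u$ equation is just $\partial_{\tau}v=\Delta v=\Delta_{0}v$, establishing (\ref{Linear-v}).

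The block-diagonal structure follows from the observations that, at a fixed point with $A=0$ and $\nabla u=0$, every off-diagonal coupling in (\ref{kappa-rescaled}) either sits behind a factor of $dA$ or of $\nabla u$ and hence vanishes to first order. Self-adjointness then reduces to the standard fact that $\Delta_{\ell}$, the Hodge--de Rham Laplacian $\Delta_{1}$ and the scalar Laplacian $\Delta_{0}$ are formally self-adjoint with respect to the $L^{2}(g)$ inner product, and strict parabolicity is immediate from the principal symbols of these Laplacians. Autonomy is clear because the linearization is taken at a fixed stationary $\mathbf{g}$ with $\underline{\Gamma}$ equal to the Levi-Civita connection of that same $g$.

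The only step that really requires care is the sign bookkeeping in step two, where one must reconcile the paper's sign convention $(\delta d A)_{\alpha}^{i}=-\nabla^{\beta}(dA)_{\beta\alpha}^{i}$ with the classical variation formula for the Christoffel symbols and with the convention $-\Delta_{p}=d\delta+\delta d$. This is the only place where a sign error could propagate, so I would perform this contraction once carefully in coordinates and then let the remaining statements follow directly.
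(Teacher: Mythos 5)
Your proposal is correct and follows essentially the same route as the paper: linearize the DeTurck vector field $W$ at the fixed point, compute the resulting correction terms to the non-parabolic system (\ref{kappa-linear-nonparabolic}) of Lemma~\ref{FirstLinearization}, and observe that they cancel the gradient terms in the $h$-slot, supply the missing $-d\delta B$ in the $B$-slot, and vanish in the $v$-slot. The only cosmetic difference is that the paper packages this as the Lie derivative of the full $(n+1)$-dimensional metric, using that the Christoffel symbols $\mathbf{\Gamma}$ vanish whenever an index is zero (so that $(\mathcal{L}_{W}\mathbf{g})_{00}=0$ and $(\mathcal{L}_{W}\mathbf{g})_{0k}=e^{2u}(d\delta B)_{k}$), which is exactly the content of your slot-by-slot computation with the exact-form modification of $A$.
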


\begin{proof}
The Christoffel symbols $\mathbf{\Gamma}\ $for a fixed point $\mathbf{g}%
=(g,0,u)$ vanish if any index is zero and satisfy $\mathbf{\Gamma}_{ij}%
^{k}=\Gamma_{ij}^{k}$ otherwise, where $\Gamma_{ij}^{k}$ are the Christoffel
symbols for $g$. Hence the DeTurck correction terms to the linearization
(\ref{kappa-linear-nonparabolic}) of (\ref{kappa-rescaled}) are%
\begin{align*}
(\mathcal{L}_{W}\mathbf{g})_{00}  &  =0,\\
(\mathcal{L}_{W}\mathbf{g})_{0k}  &  =e^{2u}(d\delta B)_{k},\\
(\mathcal{L}_{W}\mathbf{g})_{ij}  &  =\nabla_{i}(\delta h)_{j}+\nabla
_{j}(\delta h)_{i}+\nabla_{i}\nabla_{j}H,
\end{align*}
whence the result is immediate from (\ref{KaluzaKleinMetric}) and
(\ref{kappa-linear-nonparabolic}).
\end{proof}

\subsection{Linear stability of $\varkappa$-rescaled flow}

Now let us make the stronger assumption that $\mathbf{g}=(g,0,u)$ is a fixed
point of the $\varkappa$-rescaled $\mathbb{R}^{1}$-invariant Ricci--DeTurck
flow with $u$ constant and $g$ a metric of constant sectional curvature
$-1/2(n-1)$.

We say a linear operator $L$ is \emph{weakly (strictly) stable }if its
spectrum is confined to the half plane $\operatorname{Re}z\leq0$ (and is
uniformly bounded away from the imaginary axis).

Because $\mathbf{L}$ is diagonal, we can determine its stability by examining
its component operators. The conclusions we obtain here will hold below when
we extend $\mathbf{L}$ to a complex-valued operator on a larger domain in
which smooth representatives are dense.

\begin{lemma}
\label{hyperbolic}Let $\mathbf{g}=(g,0,u)$ be a metric of the form
(\ref{KaluzaKleinMetric}) such that $u$ is constant and $g$ has constant
sectional curvature $-1/2(n-1)$. Then the linear system
(\ref{kappa-linear-parabolic}) has the following stability properties:

The operator $\mathbf{L}_{0}\mathbf{\ }$is weakly stable; constant functions
form its null eigenspace.

The operator $\mathbf{L}_{1}$ is strictly stable.

If $n\geq3$, then $\mathbf{L}_{2}$ is strictly stable.

If $n=2$, then $\mathbf{L}_{2}$ is weakly stable. On an orientable surface
$\mathcal{B}$ of genus $\gamma\geq2$, the null eigenspace of $\Delta_{\ell}-1$
is the $(6\gamma-6)$-dimensional space of holomorphic quadratic differentials.
\end{lemma}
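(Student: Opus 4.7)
Since $\mathbf{L}$ is block-diagonal in $(h,B,v)$, the plan is to analyze $\mathbf{L}_0$, $\mathbf{L}_1$, and $\mathbf{L}_2$ separately. The first two yield immediately to Hodge theory: $-\Delta_0=\delta d$ is self-adjoint and non-negative on $L^2(\mathcal{B})$, and on a compact connected $\mathcal{B}$ its kernel consists precisely of the constants, so $\mathbf{L}_0$ is weakly stable with a one-dimensional null eigenspace; similarly $-\Delta_1=d\delta+\delta d\geq 0$ implies $\mathbf{L}_1=\Delta_1-1/2\leq -1/2$, giving strict stability.

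For $\mathbf{L}_2=\Delta_\ell-1$ I would split every symmetric 2-tensor as $h=\mathring h+(H/n)g$ with $\mathring h$ trace-free. The constant sectional curvature identities $R_{ij}=(n-1)K g_{ij}$ and $R_{ipqj}=K(g_{ij}g_{pq}-g_{iq}g_{pj})$ with $K=-1/(2(n-1))$ allow a direct computation from (\ref{Lichnerowicz}) showing that $\Delta_\ell$ preserves this splitting and acts as
\[
\Delta_\ell(fg)=(\Delta f)\,g,\qquad \Delta_\ell\mathring h=\Delta\mathring h-2nK\mathring h=\Delta\mathring h+\tfrac{n}{n-1}\mathring h,
\]
where $\Delta$ denotes the rough (connection) Laplacian on tensors. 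Hence $\mathbf{L}_2(fg)=(\Delta f-f)g$ is strictly stable (since $\Delta\leq 0$ on functions), and on trace-free tensors the problem reduces to the spectrum of
\[
\mathbf{L}_2\mathring h=\Delta\mathring h+\tfrac{1}{n-1}\mathring h.
\]

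On trace-free symmetric 2-tensors I would apply the Berger--Ebin decomposition $\mathring h=\mathring h^{\mathrm{TT}}+(\mathcal{L}_X g)^\circ$, with $\mathring h^{\mathrm{TT}}$ transverse-traceless and $(\mathcal{L}_X g)^\circ$ the trace-free part of a Lie derivative. Negative Ricci rules out nontrivial Killing vector fields, and the identity $\delta\mathcal{L}_X g=-\Delta X-\operatorname{Rc}(X)$ combined with the vector Laplacian spectrum yields strict stability on the $(\mathcal{L}_X g)^\circ$ summand. On the TT summand a Bochner identity on the hyperbolic background controls $\nabla^*\nabla\mathring h^{\mathrm{TT}}$; for $n\geq 3$ this produces a strict spectral gap beyond $1/(n-1)$, hence strict stability. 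For $n=2$ the classical isomorphism identifying trace-free symmetric 2-tensors on a Riemann surface with real parts of sections of $K^{\otimes 2}$ converts $\mathbf{L}_2\mathring h^{\mathrm{TT}}=0$ into the Cauchy--Riemann equation $\bar\partial q=0$, identifying the null space with the $(6\gamma-6)$-real-dimensional space of holomorphic quadratic differentials (by Riemann--Roch for $\gamma\geq 2$).

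The main obstacle is precisely this TT analysis. The sharp spectral estimate in dimension $n\geq 3$ and the exact identification of the kernel in dimension two are both reflections of infinitesimal rigidity phenomena — Calabi--Weil rigidity of compact hyperbolic manifolds in the first case and Teichm\"uller theory in the second. I would either invoke these as black boxes or re-derive them from the Bochner identity for TT tensors on an Einstein background, exploiting the fact that on a constant curvature space the curvature terms simplify to multiples of $\mathring h^{\mathrm{TT}}$.
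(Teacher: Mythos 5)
Your reductions are correct (the trace/trace-free splitting, $\Delta_{\ell}(fg)=(\Delta f)g$, and $\mathbf{L}_{2}=\Delta+\tfrac{1}{n-1}$ on trace-free tensors all check out against (\ref{Lichnerowicz}) and (\ref{kappa-linear-parabolic})), and your endgame for $n=2$ (trace-free plus divergence-free $=$ holomorphic quadratic differential, dimension $6\gamma-6$ by Riemann--Roch) is exactly the paper's. But the organization is genuinely different. The paper never decomposes $h$: it introduces $T_{ijk}=\nabla_{k}h_{ij}-\nabla_{i}h_{jk}$ and applies Koiso's Bochner formula in a single integration by parts, obtaining for every symmetric $2$-tensor the identity $(\mathbf{L}_{2}h,h)=-\tfrac{1}{2}\Vert T\Vert^{2}-\Vert\delta h\Vert^{2}-\tfrac{1}{2(n-1)}\Vert H\Vert^{2}-\tfrac{n-2}{2(n-1)}\Vert h\Vert^{2}$, which yields strict stability for $n\geq3$ and weak stability for $n=2$ at one stroke, and moreover characterizes the $n=2$ kernel as the trace-free, divergence-free tensors with $T=0$ (the paper then checks $T=0$ is automatic in dimension two). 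Your route buys conceptual transparency --- it exhibits the answer as the standard Einstein-stability picture (conformal directions, Lie-derivative directions, TT directions) and connects it to Calabi--Weil rigidity and Teichm\"uller theory --- but it requires extra bookkeeping the paper avoids: you must verify that $\Delta_{\ell}$ actually preserves the York/Berger--Ebin splitting on the Einstein background (true, via the commutation of $\Delta_{\ell}$ with $\delta^{*}$, but it needs to be said), and your identity $\delta\mathcal{L}_{X}g=-\Delta X-\operatorname{Rc}(X)$ is missing the $\nabla(\operatorname{div}X)$ term unless $X$ is divergence-free. Two further cautions: Calabi--Weil rigidity as a black box only gives triviality of the kernel on TT tensors, not nonpositivity of the spectrum, so for strict stability when $n\geq3$ you must actually run the Bochner estimate you mention --- and that estimate, namely Koiso's formula on the constant-curvature background, \emph{is} the paper's entire proof, applied without any decomposition; and in dimension two the TT summand lies entirely in the kernel of $\mathbf{L}_{2}$ (not merely the solutions of an extra equation), which is the content of the paper's observation that $T(h)$ vanishes automatically there.
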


\begin{proof}
The statements about $\mathbf{L}_{0}$ and $\mathbf{L}_{1}$ are clear.

Let $Q=Q(h)$ denote the contraction $Q=R_{ijk\ell}h^{i\ell}h^{jk}$ and define
a tensor field $T=T(h)$ by
\begin{equation}
T_{ijk}=\nabla_{k}h_{ij}-\nabla_{i}h_{jk}. \label{Define-T}%
\end{equation}
Then integrating by parts and applying Koiso's Bochner formula \cite{Koiso79},
one obtains%
\begin{align}
(\mathbf{L}_{2}h,h)  &  =-\left\Vert \nabla h\right\Vert ^{2}+2\int
_{\mathcal{B}}Q(h)\,\mathrm{d\mu}\,\nonumber\\
&  =-\frac{1}{2}\left\Vert T\right\Vert ^{2}-\left\Vert \delta h\right\Vert
^{2}-\frac{1}{2}\left\Vert h\right\Vert ^{2}+\int_{\mathcal{B}}%
Q(h)\,\mathrm{d\mu}\,\nonumber\\
&  =-\frac{1}{2}\left\Vert T\right\Vert ^{2}-\left\Vert \delta h\right\Vert
^{2}-\frac{1}{2(n-1)}\left\Vert H\right\Vert ^{2}-\frac{n-2}{2(n-1)}\left\Vert
h\right\Vert ^{2}. \label{HyperbolicComputation}%
\end{align}
This proves that (\ref{Linear-h}) is strictly stable for all $n>2$.

When $n=2$, it follows from (\ref{HyperbolicComputation}) that a tensor field
$h$ belongs to the nullspace of $\mathbf{L}_{2}=\Delta_{\ell}+R$ if and only
if $h$ is trace free and divergence free, with $T(h)=0$ vanishing identically.
We first verify that the latter condition is superfluous. In normal
coordinates at an arbitrary point $p\in\mathcal{B}$, all components of $T(h)$
vanish except possibly $T_{112}=-T_{211}=\partial_{2}h_{11}-\partial_{1}%
h_{12}$ and $T_{221}=-T_{122}=\partial_{1}h_{22}-\partial_{2}h_{21}$. But one
has $\partial_{1}h_{11}+\partial_{2}h_{21}=0$ and $\partial_{1}h_{12}%
+\partial_{2}h_{22}=0$ because $h$ is divergence free and $\partial_{1}%
h_{11}+\partial_{1}h_{22}=0$ and $\partial_{2}h_{11}+\partial_{2}h_{22}=0$
because $h$ is trace free. It follows easily that $T$ vanishes at $p$, hence everywhere.

Now let $h$ be a trace-free and divergence-free tensor field on an orientable
surface $\mathcal{B}$ of genus $\gamma\geq2$. To identify the nullspace of
$\mathbf{L}_{2}$, it is most convenient to work in local complex coordinates.
The trace-free condition implies that%
\[
h=f(z)\,dz\,dz+\bar{f}(z)\,d\bar{z}\,d\bar{z}%
\]
for some function $f:\mathcal{B}\rightarrow\mathbb{C}$, while the
divergence-free condition implies that $f$ is holomorphic. Hence $h$ is a
holomorphic quadratic differential. If $\mathcal{R}$ is a Riemann surface
obtained from a closed surface by deleting $p$ disjoint closed discs and $q$
isolated points not on any disc, the Riemann--Roch theorem implies that the
space of holomorphic quadratic differentials on $\mathcal{R}$ has real
dimension $6(\gamma-1)+3p+2q$. The result follows.
\end{proof}

When $n=2$, the nullspace for the system is classically identified with the
cotangent space to Teichm\"{u}ller space.

\subsection{Convergence and stability of $\varkappa$-rescaled flow}

We now prove Theorem~\ref{HyperbolicIsStable}. To obtain asymptotic stability
of $\varkappa$-rescaled $\mathbb{R}^{1}$-invariant Ricci flow
(\ref{N=1Unscaled}) with $\varkappa=-1$ from its Ricci--DeTurck linearization
(\ref{kappa-linear-parabolic}), we shall freely use the theory reviewed in
Section~\ref{Theory} and in particular the maximal regularity and
interpolation spaces chosen in Section~\ref{Setup}.

\begin{proof}
[Proof of Theorem~\ref{HyperbolicIsStable}]Our notation is as follows.
$\mathbf{g}=(g,A,u)$ is a locally $\mathbb{R}^{1}$-invariant metric
(\ref{KaluzaKleinMetric}) on a product $\mathbb{R}^{1}\times\mathcal{B}$, with
$\mathcal{B}$ compact and orientable, such that $g$ has constant sectional
curvature $-1/2(n-1)$, $A$ vanishes, and $u$ is constant. $\widehat
{\mathbf{g}}(\tau)$ is the unique solution of $\varkappa$-rescaled locally
$\mathbb{R}^{1}$-invariant Ricci--DeTurck flow corresponding to an initial
datum $\widetilde{\mathbf{g}}(0)$. Then there are diffeomorphisms
$\varphi_{\tau}$ with $\varphi_{0}=\operatorname*{id}$ such that the unique
solution $\widetilde{\mathbf{g}}(\tau)$ of $\varkappa$-rescaled locally
$\mathbb{R}^{1}$-invariant Ricci flow (\ref{kappa-rescaled}) with initial
datum $\widetilde{\mathbf{g}}(0)$ is given by%
\[
\widetilde{\mathbf{g}}(\tau)=\varphi_{\tau}^{\ast}(\widehat{\mathbf{g}}%
(\tau)).
\]
The proof consists of four steps.

\textsc{Step 1.} We prove that (the complexification of) the linearization
$\mathbf{L}\equiv\mathbf{L}_{\mathbf{g}}$ of $\varkappa$-rescaled locally
$\mathbb{R}^{1}$-invariant Ricci--DeTurck flow is sectorial. Observe that
$\mathbf{L}$ is strictly elliptic and self adjoint. By Lemma~\ref{hyperbolic},
one may fix $\omega>0$ such that its spectrum $\sigma(\mathbf{L}^{\mathbb{C}%
})$ satisfies $\sigma(\mathbf{L}^{\mathbb{C}})\backslash\{0\}\subset
(-\infty,-\omega)$. Standard Schauder theory then implies that $\mathbf{L}%
^{\mathbb{C}}$ is sectorial. (See Lemma~3.4 in \cite{GIK02} for a more
detailed argument.)

\textsc{Step 2.} By Step~1 and Lemma~\ref{JustDoIt}, we may apply
Theorem~\ref{SimonettRaw} to $\widehat{\mathbf{g}}(\tau)$. Recall that
$\mathbb{X}_{\alpha}=(\mathbb{X}_{0},\mathbb{X}_{1})_{\alpha}$, where
$\mathbb{X}_{0}$ and $\mathbb{X}_{1}$ are defined by (\ref{DefineNestedSpaces}%
). Theorem~\ref{SimonettRaw} implies the following three statements:

\begin{enumerate}
\item For each $\alpha\in\lbrack0,1]$, there is a direct sum decomposition
$\mathbb{X}_{\alpha}=\mathbb{X}_{\alpha}^{\mathrm{s}}\oplus\mathbb{X}_{\alpha
}^{\mathrm{c}}$, where $\mathbb{X}_{\alpha}^{\mathrm{c}}$ is the nullspace of
$\mathbf{L}$ of dimension%
\[
\dim\mathbb{X}_{\alpha}^{\mathrm{c}}=\left\{
\begin{array}
[c]{cl}%
1+6(\gamma-1) & \text{if }n=2,\\
1 & \text{if }n\geq3.
\end{array}
\right.
\]
(Recall that if $n=2$, then $\mathcal{B}$ must be an orientable surface of
genus $\gamma\geq2$.)

\item For each $r\in\mathbb{N}$, there exist $d$ and a local $C^{r}$ center
manifold $\Gamma_{\mathrm{loc}}^{r}=\mathrm{graph}\left(  \gamma_{d}%
^{r}:B(\mathbb{X}_{1}^{\mathrm{c}},\mathbf{g},d)\rightarrow\mathbb{X}%
_{1}^{\mathrm{s}}\right)  $ which is invariant for solutions of $\ $%
Ricci--DeTurck flow as long as they remain in $B(\mathbb{X}_{1}^{\mathrm{c}%
},\mathbf{g},d)\times B(\mathbb{X}_{1}^{\mathrm{s}},0,d)$.

\item For each $\alpha\in(0,1)$, there exist positive constants $M=M(\alpha)$
and $d=d(\alpha,r)$ such that for each initial datum $\widetilde{\mathbf{g}%
}(0)\in B(\mathbb{X}_{\alpha},\mathbf{g},d)$ and all times $\tau>0$ such that
$\widehat{\mathbf{g}}(\tau)\in B(\mathbb{X}_{\alpha},\mathbf{g},d)$, the
center manifold $\Gamma_{\mathrm{loc}}^{r}$ is exponentially attractive in the
stronger space $\mathbb{X}_{1}$ in the sense that
\begin{equation}
\left\Vert \pi^{\mathrm{s}}\widehat{\mathbf{g}}(\tau)-\gamma_{d}^{r}%
(\pi^{\mathrm{c}}\widehat{\mathbf{g}}(\tau))\right\Vert _{\mathbb{X}_{1}}%
\leq\frac{M}{\tau^{1-\alpha}}e^{-\omega\tau}\left\Vert \pi^{\mathrm{s}%
}\widehat{\mathbf{g}}(0)-\gamma_{d}^{r}(\pi^{\mathrm{c}}\widehat{\mathbf{g}%
}(0))\right\Vert _{\mathbb{X}_{\alpha}}, \label{HyperbolicAttraction}%
\end{equation}
where $\pi^{\mathrm{s}}$ and $\pi^{\mathrm{c}}$ are projections onto
$\mathbb{X}_{\alpha}^{\mathrm{s}}\cong(\mathbb{X}_{1}^{\mathrm{s}}%
,\mathbb{X}_{0}^{\mathrm{s}})_{\alpha}$ and $\mathbb{X}_{\alpha}^{\mathrm{c}}%
$, respectively.
\end{enumerate}

\noindent(All constants introduced here and below implicitly depend on
$\mathbf{g}$.)

\textsc{Step 3.} We prove that the local center manifolds $\Gamma
_{\mathrm{loc}}^{r}$ coincide for all $r$, namely, that there is a unique
smooth center manifold $\Gamma=\mathrm{graph}\left(  \gamma:B(\mathbb{X}%
_{1}^{\mathrm{c}},\mathbf{g},d_{0})\rightarrow\mathbb{X}_{1}^{\mathrm{s}%
}\right)  $ consisting of fixed points of $\varkappa$-rescaled locally
$\mathbb{R}^{1}$-invariant Ricci flow (\ref{kappa-rescaled}). First observe
that for all $n\geq2$ and all $c\in\mathbb{R}$, $(g,0,c)$ is a stationary
solution both of (\ref{kappa-rescaled}) and $\ $Ricci--DeTurck flow. By
(\ref{HyperbolicAttraction}), any such metric sufficiently near $\mathbf{g}%
=(g,0,u)$ must belong to all $\Gamma_{\mathrm{loc}}^{r}$. When $n=2$,
Teichm\"{u}ller theory shows that there is a $6(\gamma-1)$-dimensional space
$\Gamma^{\prime}$ of metrics $\mathbf{g}^{\prime}=(g^{\prime},0,u)$ near
$\mathbf{g}$ such that $g^{\prime}$ is hyperbolic. Each such metric is a fixed
point of (\ref{kappa-rescaled}), hence evolves under Ricci--DeTurck flow only
by diffeomorphisms. By diffeomorphism invariance, $g^{\prime}(\tau)$ remains
hyperbolic, so $\mathbf{g}^{\prime}(\tau)$ remains in $\Gamma^{\prime}$. By
(\ref{HyperbolicAttraction}), $\mathbf{g}^{\prime}(\tau)$ must belong to all
$\Gamma_{\mathrm{loc}}^{r}$.

\textsc{Step 4.} Fix $\alpha\in(1/2,1-\rho/2)$. Then by the interpolation
isomorphism (\ref{InterpolationIsomorphism}), $\left\Vert \cdot\right\Vert
_{\mathbb{X}_{\alpha}}$ is equivalent to a $(1+\theta)$-H\"{o}lder norm, with
$\theta\in(\rho,1)$. For all $\widetilde{\mathbf{g}}(0)$ sufficiently near
$\mathbf{g}$, we now prove that $\widetilde{\mathbf{g}}(\tau)$ converges in
the $\mathbb{X}_{\alpha}$ norm to an element of $\Gamma$. (We give the proof
in general, even though certain steps are much easier when $n>2$.)

Fix $\lambda\in(0,\omega)$. Then by (\ref{HyperbolicAttraction}) and Step~3
there exists $C=C(M,\lambda)$ such that%
\begin{equation}
\left\Vert \pi^{\mathrm{s}}\widehat{\mathbf{g}}(\tau)-\gamma(\pi^{\mathrm{c}%
}\widehat{\mathbf{g}}(\tau))\right\Vert _{\mathbb{X}_{1}}\leq Ce^{-\lambda
\tau}\left\Vert \widetilde{\mathbf{g}}(0)-\mathbf{g}\right\Vert _{\mathbb{X}%
_{\alpha}} \label{SimpleHatHyperbolic}%
\end{equation}
for all $\tau>0$ such that $\widehat{\mathbf{g}}(\tau)\in B(\mathbb{X}%
_{\alpha},\mathbf{g},d)$. Let $\delta,\varepsilon$ be positive constants to be
determined so that $0<\varepsilon<\delta<d$, and suppose that $\widetilde
{\mathbf{g}}(0)\in B(\mathbb{X}_{\alpha},\mathbf{g},\varepsilon)$. Then it
follows from (\ref{SimpleHatHyperbolic}) and (\ref{kappa-rescaled}) that
\begin{equation}
\left\Vert \frac{\partial}{\partial\tau}\widetilde{\mathbf{g}}(\tau
)\right\Vert _{0+\rho}\leq C_{n}Ce^{-\lambda\tau}\varepsilon.
\label{SimpleTildeHyperbolic}%
\end{equation}
If $\delta/\varepsilon$ is sufficiently large, this implies that%
\[
\left\Vert \widetilde{\mathbf{g}}(\tau)-\mathbf{g}\right\Vert _{\mathbb{X}%
_{1}}\leq\varepsilon\left(  1+\frac{C_{n}C}{\lambda}\right)  <\delta,
\]
uniformly in time. Because $\widetilde{\mathbf{g}}(\tau)=\varphi_{\tau}^{\ast
}(\widehat{\mathbf{g}}(\tau))$, the only way that the solution $\widehat
{\mathbf{g}}(\tau)$ of Ricci--DeTurck flow could leave $B(\mathbb{X}_{\alpha
},\mathbf{g},\delta)$ is by diffeomorphisms. But as was observed by Hamilton
\cite{Ham95}, the diffeomorphisms $\varphi_{\tau}$ satisfy a harmonic map flow%
\[
\frac{\partial}{\partial\tau}\varphi_{\tau}=\Delta_{\widetilde{\mathbf{g}%
}(\tau),\mathbf{g}}\varphi_{\tau}.
\]
with domain metric $\widetilde{\mathbf{g}}(\tau)$ and codomain metric
$\mathbf{g}$. Because $\widetilde{\mathbf{g}}(\tau)\in B(\mathbb{X}%
_{1},\mathbf{g},\delta)$ for all $\tau\geq0$, mild generalizations of standard
estimates for harmonic map heat flow into negatively curved targets imply that
the diffeomorphisms $\varphi_{\tau}$ exist for all $\tau\geq0$ and satisfy
\[
\left\Vert \varphi_{\tau}-\operatorname*{id}\right\Vert _{\mathbb{X}_{1}}\leq
c
\]
for $c=c(\delta)$. (See \cite{ES64} and \cite{Simon83}, for instance.) Note
that $c$ does not increase if one makes $\delta$ smaller. It follows that for
$\delta\in(0,d)$ and $\varepsilon=\varepsilon(\delta)\in(0,\delta)$
sufficiently small, one has $\widehat{\mathbf{g}}(\tau)\in B(\mathbb{X}%
_{\alpha},\mathbf{g},d)$ for all $\tau\geq0$. By (\ref{SimpleTildeHyperbolic}%
), the result follows.
\end{proof}

\section{The case $N=1$: volume-rescaled locally $\mathbb{R}^{1}$-invariant
Ricci flow\label{VolumeFlow}}

In case $(\mathcal{M},\mathbf{\bar{g}})$ is a Riemannian product over an
Einstein manifold of positive Ricci curvature such that
\begin{equation}
\mathbf{\bar{g}}=e^{2\bar{u}}\,dx^{0}\otimes dx^{0}+e^{2\bar{u}}\bar{A}%
_{k}\,(dx^{0}\otimes dx^{k}+dx^{k}\otimes dx^{0})+(e^{2\bar{u}}\bar{A}_{i}%
\bar{A}_{j}+\bar{g}_{ij})\,dx^{i}\otimes dx^{j}, \label{KaluzaKleinMetric2}%
\end{equation}
the rescaled $\mathbb{R}^{1}$-invariant Ricci flow system (\ref{RRFS})
corresponding to the normalization in Example~\ref{NRF} is a more suitable choice.

\subsection{Volume-rescaled flow}

\emph{Volume-rescaled locally }$\mathbb{R}^{1}$\emph{-invariant Ricci flow} is
the system
\begin{subequations}
\label{volume-rescaled}%
\begin{align}
\frac{\partial}{\partial\tau}g_{ij}  &  =-2R_{ij}+e^{2u}g^{k\ell}%
(dA)_{ik}(dA)_{j\ell}+2\nabla_{i}u\nabla_{j}u+\frac{2}{n}(\oint_{\mathcal{B}%
}r\,\mathrm{d\mu}\,)g_{ij},\\
\frac{\partial}{\partial\tau}A_{k}  &  =g^{ij}\{\nabla_{i}(dA)_{jk}%
+2(\nabla_{i}u)(dA)_{jk}\}+\frac{1}{n}(\oint_{\mathcal{B}}r\,\mathrm{d\mu
}\,)A_{k},\\
\frac{\partial}{\partial\tau}u  &  =\Delta u-\frac{1}{4}e^{2u}|dA|^{2},
\end{align}
where all geometric quantities are computed with respect to $g$, and
\end{subequations}
\begin{equation}
r=R-\frac{1}{2}e^{2u}\left\vert dA\right\vert ^{2}-\left\vert \nabla
u\right\vert ^{2}.
\end{equation}

\subsection{Linearization at a stationary solution of volume-rescaled flow}

Fix a background connection $\underline{\Gamma}$ and define a $1$-parameter
family of vector fields $W(\tau)$ along a solution $\mathbf{g}(\tau)$ of
(\ref{volume-rescaled}) by%
\begin{align}
W^{0}  &  =\delta A,\\
W^{k}  &  =g^{ij}(\Gamma_{ij}^{k}-\underline{\Gamma})\qquad(1\leq k\leq
n).\nonumber
\end{align}
The solution of the \emph{volume-rescaled }$\mathbb{R}^{1}$\emph{-invariant
Ricci--DeTurck flow} is the $1$-parameter family of metrics $\psi_{\tau}%
^{\ast}\mathbf{g}(\tau)$, where the diffeomorphisms $\psi_{\tau}$ are
generated by $W(\tau)$, subject to the initial condition $\psi_{0}%
=\operatorname*{id}$. Below, we take $\underline{\Gamma}$ to be the
Levi-Civita connection of the metric about which we linearize.

Any Riemannian product $(\mathbb{R}\times\mathcal{B},e^{2u}\,dx^{0}\otimes
dx^{0}+g_{ij}\,dx^{i}\otimes dx^{j})$ with $g$ an Einstein metric, $A$
identically zero, and $u$ constant in space is clearly a stationary solution
of the volume-rescaled $\mathbb{R}^{1}$-invariant Ricci--DeTurck flow. Let
$\mathbf{g}=(g,0,u)$ be such a fixed point, with $\operatorname*{Rc}=Kg$ and
hence $r=nK$. Again, let $(g+h,B,u+v)$ denote a perturbation of $\mathbf{g}$.
We continue to write $H=\operatorname*{tr}\!_{g}h$ and to denote the
Lichnerowicz Laplacian (\ref{Lichnerowicz}) by $\Delta_{\ell}$. Proceeding as
we did for the $\varkappa$-rescaled flow, one obtains the following.

\begin{lemma}
The linearization of the volume-rescaled $\mathbb{R}^{1}$-invariant
Ricci--DeTurck flow at a fixed point $\mathbf{g}=(g,0,u)$ with
$\operatorname*{Rc}=Kg$ and $u$ constant is the autonomous, self-adjoint,
strictly parabolic system%
\[
\frac{\partial}{\partial\tau}%
\begin{pmatrix}
h & B & v
\end{pmatrix}
=\mathbf{L}%
\begin{pmatrix}
h & B & v
\end{pmatrix}
=%
\begin{pmatrix}
\mathbf{L}_{2}h & \mathbf{L}_{1}B & \mathbf{L}_{0}v
\end{pmatrix}
,
\]
where
\begin{subequations}
\label{volume-linear-parabolic}%
\begin{align}
\mathbf{L}_{2}h  &  =\Delta_{\ell}h+2K\{h-\frac{1}{n}\bar{H}%
g\},\label{Volume-h}\\
\mathbf{L}_{1}B  &  =\Delta_{1}B+KB,\label{Volume-B}\\
\mathbf{L}_{0}v  &  =\Delta_{0}v. \label{Volume-v}%
\end{align}
Here $\bar{H}=\oint_{\mathcal{B}}H\,\mathrm{d\mu}$ and $-\Delta_{p}%
=d\delta+\delta d$ denotes the Hodge--de Rham Laplacian acting on $p$-forms.
\end{subequations}
\end{lemma}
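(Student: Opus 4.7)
The plan is to mirror the proof of Lemma~\ref{FirstLinearization}, with the one essential new ingredient being the linearization of the nonlocal volume-preserving factor $\frac{2}{n}\oint_{\mathcal{B}} r\,\mathrm{d\mu}$. At the background $\mathbf{g}=(g,0,u)$ with $u$ constant, $A=0$, and $\operatorname{Rc}=Kg$, every term in (\ref{volume-rescaled}) that is quadratic in $B$, $dA$, or $\nabla v$ vanishes to first order in the perturbation and so contributes nothing to the linearization. The local part of the computation is therefore identical to the one performed for Lemma~\ref{FirstLinearization}, using the classical variation formulas for $\tilde{g}^{ij}$, $\tilde{\Gamma}_{ij}^{k}$, and $\tilde{R}_{ij}$ recalled there.

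The new step is to linearize the nonlocal factor. Since $r=R$ at the background and $R=nK$ is constant, the standard identity
\[
\delta R = -\Delta H + \nabla^{i}\nabla^{j}h_{ij} - R_{ij}h^{ij}
\]
combined with $R_{ij}=Kg_{ij}$ and integration by parts on the closed manifold $\mathcal{B}$ yields $\int_{\mathcal{B}}\delta R\,\mathrm{d\mu} = -K\int_{\mathcal{B}}H\,\mathrm{d\mu}$. Applying the quotient rule to $\oint r\,\mathrm{d\mu} = (\int r\,\mathrm{d\mu})/(\int \mathrm{d\mu})$ and using $\delta(\mathrm{d\mu})=\frac{1}{2}H\,\mathrm{d\mu}$, one checks that the contribution $\frac{nK}{2}\bar H$ coming from varying the volume form in the numerator exactly cancels the contribution $\frac{nK}{2}\bar H$ coming from varying the volume in the denominator, leaving
\[
\delta\!\left(\oint_{\mathcal{B}} r\,\mathrm{d\mu}\right) = -K\bar{H}.
\]
Since the background values are $\oint r\,\mathrm{d\mu} = nK$ and $A=0$, expanding and collecting first-order terms in $\frac{2}{n}(\oint r\,\mathrm{d\mu})g_{ij}$ and $\frac{1}{n}(\oint r\,\mathrm{d\mu})A_{k}$ produces exactly the extra pieces $2K\{h_{ij}-\frac{1}{n}\bar{H}g_{ij}\}$ and $KB_{k}$, respectively, while the $u$-equation picks up no contribution.

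Finally, I would apply the DeTurck correction exactly as in the $\varkappa$-rescaled setting. The Christoffel symbols of $\mathbf{g}$ vanish on any zero index, so $\mathcal{L}_{W}\mathbf{g}$ again produces only the terms $\nabla_{i}(\delta h)_{j}+\nabla_{j}(\delta h)_{i}+\nabla_{i}\nabla_{j}H$ on the $h$-block and $d\delta B$ on the $A$-block. These combine with the Ricci variation to give $\Delta_{\ell}h$, complete $-\delta dB$ into the Hodge Laplacian $\Delta_{1}B$, and leave the $v$-equation as $\Delta v$. Assembling these three steps yields the claimed system~(\ref{volume-linear-parabolic}). The only step that is not a mechanical transcription of the earlier computation is the cancellation in the quotient rule for $\oint r\,\mathrm{d\mu}$; I expect that keeping track of the signs there, and verifying that the trace-free combination $h-\frac{1}{n}\bar{H}g$ emerges rather than some other linear combination, will be the main delicate point.
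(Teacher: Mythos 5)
Your proposal is correct and follows essentially the same route as the paper: linearize the local terms exactly as in Lemma~\ref{FirstLinearization}, apply the same DeTurck correction, and treat the only new ingredient, the nonlocal term, via the variations of $R$ and $\mathrm{d\mu}$; your quotient-rule computation giving $\delta\bigl(\oint_{\mathcal{B}}r\,\mathrm{d\mu}\bigr)=-K\bar{H}$ is precisely the specialization at $\operatorname*{Rc}=Kg$ of the variation formula for $\oint\tilde{R}\,\mathrm{d\tilde{\mu}}$ that the paper quotes from \cite{GIK02}. The cancellation of the two $\tfrac{nK}{2}\bar{H}$ contributions that you flag as the delicate point does work out exactly as you describe, since $r\equiv nK$ is constant at the background.
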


\begin{proof}
The argument here needs a few more variation formulas in addition to those we
recalled in the proof of Lemma~\ref{FirstLinearization}. Once again, let
$\tilde{g}(\varepsilon)$ be a smooth one-parameter family of Riemannian
metrics such that%
\[
\tilde{g}(0)=g\qquad\text{and}\qquad\left.  \frac{\partial}{\partial
\varepsilon}\right\vert _{\varepsilon=0}\tilde{g}=h.
\]
Using tildes to denote geometric quantities associated to $\tilde{g}$ and
undecorated characters to denote quantities associated to $g$, one recalls
from \cite[Lemma~2.2]{GIK02} that%
\begin{align*}
\left.  \frac{\partial}{\partial\varepsilon}\right\vert _{\varepsilon=0}%
\tilde{R}  &  =-(\Delta H-\delta^{2}h+\left\langle \operatorname*{Rc}%
,h\right\rangle ),\\
\left.  \frac{\partial}{\partial\varepsilon}\right\vert _{\varepsilon
=0}\mathrm{d\tilde{\mu}}  &  =\frac{1}{2}H\,\mathrm{d\mu}\,,\\
\left.  \frac{\partial}{\partial\varepsilon}\right\vert _{\varepsilon=0}%
\oint_{\mathcal{B}}\tilde{R}\,\mathrm{d\tilde{\mu}}\,  &  =\oint\left\{
\frac{1}{2}\left(  R-\oint R\,\mathrm{d\mu}\,\right)  H-\left\langle
\operatorname*{Rc},h\right\rangle \right\}  \,\mathrm{d\mu}\,.
\end{align*}
These formulas simplify nicely when $\operatorname*{Rc}=Kg$. Using this fact,
the result follows by careful but straightforward calculation.
\end{proof}

\subsection{Linear stability of volume-rescaled flow}

Now let us make the stronger assumption that $\mathbf{g}=(g,0,u)$ is a fixed
point of the volume-rescaled $\mathbb{R}^{1}$-invariant Ricci--DeTurck flow
with $u$ constant and $g$ a metric of constant sectional curvature $k>0$ and
hence $K=(n-1)k$.

By passing to a covering space if necessary, we may assume that $\mathcal{B}%
^{n}$ is the round $n$-sphere of radius $\sqrt{1/k}$. Recall that the spectrum
of the Laplacian $\Delta_{0}$ acting on scalar functions on $(\mathcal{B}%
^{n},g)$ is $\{\lambda_{j,k}\}_{j\geq0}$, where%
\[
\lambda_{j,k}=-jk(n+j-1).
\]
The eigenspace $\Lambda_{j,k}$ consists of the restriction to $\mathcal{B}%
^{n}\subset\mathbb{R}^{n+1}$ of all $j$-homogeneous polynomials in the
coordinate functions $(x^{1},\ldots,x^{n+1})$ of $\mathbb{R}^{n+1}$.

Given a symmetric $(2,0)$-tensor field $h$, let $f=f(h)$ denote its trace-free
part defined by
\begin{equation}
h=\frac{1}{n}Hg+f. \label{Define-f}%
\end{equation}
It is then convenient to rewrite (\ref{Volume-h}) and (\ref{Volume-B}) as
\begin{subequations}
\label{Einstein-NRF-LinearParabolic}%
\begin{align}
\mathbf{L}_{2}h  &  =\Delta h-2kf+2\frac{n-1}{n}k(H-\bar{H})g,\\
\mathbf{L}_{1}B  &  =\Delta_{1}B+(n-1)kB,
\end{align}
respectively.\footnote{Recall that $\Delta$ is the rough Laplacian.}

As above, the fact that $\mathbf{L}$ is diagonal lets us determine its
stability by examining its component operators. The conclusions we obtain will
continue to hold when we extend $\mathbf{L}$ to a larger domain in which
smooth representatives are dense.
\end{subequations}
\begin{lemma}
Let $\mathbf{g}=(g,0,u)$ be a metric of the form (\ref{KaluzaKleinMetric})
such that $u$ is constant and $g$ has constant sectional curvature $k>0$. Then
the linear system (\ref{volume-linear-parabolic}) has the following stability properties:

$\mathbf{L}_{0}$ is weakly stable; its null eigenspace consists of constant functions.

$\mathbf{L}_{1}$ is strictly stable.

If $n=2$, then $\mathbf{L}_{2}$ is weakly stable; its null eigenspace is
\[
\{\varphi g:\varphi\in\Lambda_{0,k}\cup\Lambda_{1,k}).
\]

If $n\geq3$, then $\mathbf{L}_{2}$ is unstable. The sole unstable eigenvalue
is $(n-2)k$ with eigenspace $\{\varphi g:\varphi\in\Lambda_{1,k}\}$. The null
eigenspace is $\{\varphi g:\varphi\in\Lambda_{0,k}\}=\{cg:c\in\mathbb{R}\}$.
\end{lemma}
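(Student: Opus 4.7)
The plan is to analyze each of the three diagonal blocks of $\mathbf{L}$ separately. For $\mathbf{L}_0=\Delta_0$, the claim is immediate from the standard fact that the scalar Laplacian on a compact Riemannian manifold is non-positive with null eigenspace equal to the constants. For $\mathbf{L}_1$, I would invoke the Weitzenb\"ock identity $-\Delta_1 = -\Delta + \operatorname{Rc}$ on $1$-forms, which on our Einstein base $(\mathcal{B},g)$ with $\operatorname{Rc}=(n-1)k\,g$ gives $\Delta_1 B = \Delta B-(n-1)kB$, hence $\mathbf{L}_1 B=\Delta B$. Since $\langle\Delta B,B\rangle_{L^2}=-\|\nabla B\|^2$, the rough Laplacian is strictly negative on any compact manifold admitting no parallel $1$-form; the positive-curvature sphere admits none, so $\mathbf{L}_1$ is strictly stable.

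The main work is $\mathbf{L}_2$. The key observation is that the orthogonal splitting of symmetric $(2,0)$-tensors into conformal plus trace-free pieces, $h=\varphi g+f$ with $\varphi=H/n$, is preserved by $\mathbf{L}_2$. Indeed, for $h=\varphi g$ one checks $\mathbf{L}_2(\varphi g)=\bigl[\Delta\varphi+2(n-1)k(\varphi-\bar\varphi)\bigr]g$, which is again conformal; and for trace-free $f$, $\operatorname{tr}_g(\Delta f)=\Delta(\operatorname{tr}_g f)=0$, so $\mathbf{L}_2 f=\Delta f-2kf$ remains trace-free. Self-adjointness of $\mathbf{L}_2$ with respect to the $L^2$ inner product is a direct verification using the orthogonality $\langle\varphi g,f\rangle=0$ and integration by parts, so the spectrum of $\mathbf{L}_2$ is the disjoint union of its spectra on the two invariant subspaces.

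On the trace-free block, integration by parts gives
\[
\langle\mathbf{L}_2 f,f\rangle_{L^2}=-\|\nabla f\|^2-2k\|f\|^2<0
\]
for any $f\not\equiv 0$, so this block is strictly stable and contributes nothing to the null or unstable spectrum. On the conformal block the analysis reduces to the scalar operator $M\varphi:=\Delta\varphi+2(n-1)k(\varphi-\bar\varphi)$, which acts as $\Delta+2(n-1)k$ on the $L^2$-orthogonal complement of the constants and as $0$ on the constants. Since $\mathcal{B}^n$ is (a quotient of) the round sphere of radius $\sqrt{1/k}$, I would pass to the universal cover and use the well-known eigenvalue list $\lambda_{j,k}=-jk(n+j-1)$ on $\Lambda_{j,k}$: for $\varphi\in\Lambda_{j,k}$ with $j\geq 1$ the $M$-eigenvalue is $-jk(n+j-1)+2(n-1)k$. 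A one-line arithmetic check shows this equals $(n-2)k$ exactly when $j=1$, and is bounded above by $-4k<0$ for every $j\geq 2$. Combining: the only zero eigenvalues come from constants $\varphi\in\Lambda_{0,k}$ (always) and, when $n=2$, from $\varphi\in\Lambda_{1,k}$; the only positive eigenvalue occurs when $n\geq 3$ with value $(n-2)k$ on $\{\varphi g:\varphi\in\Lambda_{1,k}\}$. This yields precisely the four statements of the lemma.

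I do not anticipate a serious obstacle; the only subtle point is verifying that $\mathbf{L}_2$ genuinely respects the conformal/trace-free splitting despite the nonlocal term $\bar H$, which requires noting that $\bar H=n\bar\varphi$ in the conformal block and vanishes in the trace-free block. Once that invariance is in hand, the argument is purely a spherical-harmonic eigenvalue count.
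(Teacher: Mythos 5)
Your proposal is correct and takes essentially the same route as the paper: the heart of the argument, the $\mathbf{L}_{2}$ analysis, uses exactly the paper's decomposition $h=\frac{1}{n}Hg+f$ into conformal and trace-free blocks, with the conformal block reduced to $\Delta_{0}+2(n-1)k$ on mean-zero functions and diagonalized on the spherical harmonics $\Lambda_{j,k}$, yielding the eigenvalue $(n-2)k$ on $\{\varphi g:\varphi\in\Lambda_{1,k}\}$ and strict stability elsewhere. The only (minor) divergence is $\mathbf{L}_{1}$, where you identify it with the rough Laplacian via the Weitzenb\"ock formula and rule out parallel $1$-forms, instead of the paper's appeal to the known Gallot--Meyer spectrum of $\Delta_{1}$ on the sphere (which additionally gives the explicit gap $-k$); both arguments are valid.
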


\begin{proof}
Weak stability of $\mathbf{L}_{0}$ is clear.

It is well known (see \cite{GM75}, for example) that the spectrum of
$\Delta_{1}$ is%
\[
\left\{  \lambda_{j,k}\right\}  _{j\geq1}\cup\left\{  \lambda_{j,k}%
-(n-2)k\right\}  _{j\geq1}.
\]
Because $n\geq2$, it follows that the largest eigenvalue of $\mathbf{L}_{1}$
is $-k$.

Using (\ref{Define-f}), one obtains the decomposition $\mathbf{L}_{2}%
h=\frac{1}{n}L_{0}^{\prime}H+L_{2}^{\prime}f$, where%
\begin{align*}
L_{0}^{\prime}H  &  =\Delta_{0}H+2(n-1)k(H-\bar{H}),\\
L_{2}^{\prime}f  &  =\Delta f-2kf.
\end{align*}
The operator $L_{2}^{\prime}$ is evidently strictly stable. Because
\[
(L_{0}^{\prime}H,H)=-\left\Vert \nabla H\right\Vert ^{2}+2(n-1)k(\left\Vert
H\right\Vert ^{2}-V\bar{H}^{2}),
\]
where $V=\operatorname*{Vol}(\mathcal{B},g)$, one sees that the spectrum of
$L_{0}^{\prime}$ is bounded from above by that of $\Delta_{0}+2(n-1)k$.
Because $\lambda_{2,k}=-2(n+1)k$, the only possible unstable eigenfunctions
for $L_{0}^{\prime}$ are elements of $\Lambda_{0,k}$ and $\Lambda_{1,k}$. It
is easy to see that elements of $\Lambda_{0,k}$ (i.e.~constant functions)
belong to the nullspace of $L_{0}^{\prime}$. And if $H\in\Lambda_{1,k}$, then
one has $\bar{H}=0$ and hence $(L_{0}^{\prime}H,H)=\{\lambda_{1,k}%
+2(n-1)k\}\left\Vert H\right\Vert ^{2}=(n-2)k\left\Vert H\right\Vert ^{2}$.
Because $\left\Vert h\right\Vert ^{2}=\frac{1}{n}\left\Vert H\right\Vert
^{2}+\left\Vert f\right\Vert ^{2}$, the result follows.
\end{proof}

\begin{remark}
The elements of $\{\varphi g:\varphi\in\Lambda_{1,k}\}$ are infinitesimal
conformal diffeomorphisms (M\"{o}bius transformations): it is a standard fact
that any $h\in\{\varphi g:\varphi\in\Lambda_{1,k}\}$ satisfies
\[
h=\varphi g=-\frac{1}{2k}\mathcal{L}_{\operatorname*{grad}\varphi}g.
\]

\end{remark}

\begin{remark}
\label{UnstableHMF}The apparent instability of $\mathbf{L}_{2}$ for $n\geq3$
is an accident of the DeTurck trick rather than an essential feature of Ricci
flow. Indeed, Hamilton observed \cite{Ham95} that the DeTurck diffeomorphisms
solve a harmonic map heat flow, with an evolving domain metric and a fixed
target metric. It is well known that the identity map of round spheres
$\mathcal{S}^{n}\ $is an unstable harmonic map for all $n\geq3$. On the other
hand, one can show by different methods that spherical space forms are
attractive fixed points for normalized Ricci flow in any dimension. See
\cite{BW07} and \cite{BS08}.

In any case, the instability of $\mathbf{L}_{2}$ is in a sense a red herring,
i.e.~geometrically insignificant. Indeed, Moser \cite{Moser65} has shown that
if $g$ and $\tilde{g}$ are any metrics on a compact manifold $\mathcal{N}$
such that $\operatorname*{Vol}(\mathcal{N},\tilde{g})=\operatorname*{Vol}%
(\mathcal{N},g)$, then $\tilde{g}=\psi^{\ast}\hat{g}$ for some diffeomorphism
$\psi:\mathcal{N}\rightarrow\mathcal{N}$ and some metric $\hat{g}$ on
$\mathcal{N}$ with $\mathrm{d\tilde{\mu}}\,=\,\mathrm{d\mu}$. So, up to
diffeomorphism, it suffices to consider perturbations that preserve the volume
element $\mathrm{d\mu}$ of a stationary solution $g$. These are exactly those
$h$ with $H=0$ pointwise. Restricted to such perturbations, $\mathbf{L}_{2}$
is strictly stable.
\end{remark}

We can now at least partially explain our \emph{ad hoc} choices of normalization.

\begin{remark}
\label{kappa-bad}If $\mathbf{g}=(g,0,u)$ is a metric of the form
(\ref{KaluzaKleinMetric}) such that $u$ is constant and $g$ has constant
sectional curvature $1/2(n-1)>0$, then the operator $\mathbf{L}_{2}$ in the
linearization (\ref{kappa-linear-parabolic}) of $\varkappa$-rescaled
Ricci--DeTurck flow (\ref{kappa-rescaled}) satisfies%
\[
(\mathbf{L}_{2}h,h)=\frac{1}{n}(\left\Vert \nabla H\right\Vert ^{2}+\left\Vert
H\right\Vert ^{2})-\left\Vert \nabla f\right\Vert ^{2}-\frac{1}{n-1}\left\Vert
f\right\Vert ^{2}.
\]
So $1$ is an eigenvalue with eigenspace $\{cg:c\in\mathbb{R}\}$.
\end{remark}

\begin{remark}
\label{volume-bad}If $\mathbf{g}=(g,0,u)$ is a metric of the form
(\ref{KaluzaKleinMetric}) such that $u$ is constant and $g$ has constant
sectional curvature $k<0$, then Koiso's Bochner formula \cite{Koiso79} in the
form%
\[
\left\Vert \nabla h\right\Vert ^{2}=\frac{1}{2}\left\Vert T\right\Vert
^{2}+\left\Vert \delta h\right\Vert ^{2}-nk\left\Vert f\right\Vert ^{2}%
\]
implies that the operator $\mathbf{L}_{2}$ in the linearization
(\ref{volume-linear-parabolic}) of the volume-rescaled Ricci--DeTurck flow
(\ref{volume-rescaled}) satisfies%
\[
(\mathbf{L}_{2}h,h)=-\frac{1}{2}\left\Vert T\right\Vert ^{2}-\left\Vert \delta
h\right\Vert ^{2}+(n-2)k\left\Vert f\right\Vert ^{2}+2\frac{n-1}%
{n}k\{\left\Vert H\right\Vert ^{2}-V\bar{H}^{2}\}.
\]
H\"{o}lder's inequality implies that the quantity in braces is nonnegative and
vanishes exactly when $\left\vert H\right\vert $ is constant a.e. So
$\mathbf{L}_{2}$ has a null eigenvalue with eigenspace $\{cg:c\in\mathbb{R}\}$
in all dimensions. (The nullspace is of course larger when $n=2$).
\end{remark}

\subsection{Convergence and stability of volume-rescaled flow}

Here we exploit the fact that the identity map of the round $2$-sphere is a
weakly stable harmonic map.

\begin{proof}
[Proof of Theorem \ref{TwoSphereIsStable}]The proof is entirely analogous to
the proof of Theorem~\ref{HyperbolicIsStable}, except for Step~4. Here we must
uniformly bound diffeomorphisms $\{\varphi_{\tau}\}$ solving%
\begin{align*}
\frac{\partial}{\partial\tau}\varphi_{\tau}  &  =\Delta_{\widetilde
{\mathbf{g}}(\tau),\mathbf{g}}\varphi_{\tau},\\
\varphi_{0}  &  =\operatorname*{id}:(\mathcal{S}^{2},\widetilde{\mathbf{g}%
}(0))\rightarrow(\mathcal{S}^{2},\mathbf{g}),
\end{align*}
where $\mathbf{g}$ is a metric of constant positive curvature and
$\widetilde{\mathbf{g}}(\tau)\in B(\mathbb{X}_{1},\mathbf{g},\delta)$ for all
$\tau\geq0$. In this case, the required bound $\left\Vert \varphi_{\tau
}-\operatorname*{id}\right\Vert _{\mathbb{X}_{1}}\leq c(\delta)$ follows from
a result of\ Topping \cite{Topping97}. The remainder of the proof goes through
without modification.
\end{proof}

\section{The case $n=1$:\ holonomy-rescaled locally $\mathbb{R}^{N}$-invariant
Ricci flow\label{MTRF}}

We now consider evolving metrics on the mapping torus $\mathbb{R}%
^{N}\hookrightarrow\mathcal{M}_{\Lambda}\overset{\pi}{\mathcal{\longrightarrow
}}\mathcal{S}^{1}$ of a given $\Lambda\in\operatorname*{Gl}(N,\mathbb{R})$. We
let $x^{0}$ denote the coordinate on $\mathcal{S}^{1}\approx\mathbb{R}%
/\mathbb{Z}$ and take $(x^{1},\ldots,x^{N})$ as local coordinates on the
fibers. Let $c$ and $s$ be constants and let $V$ be a $1$-form to be
determined. Then the evolution of%
\begin{equation}
\mathbf{g}(\tau)=u\,dx^{0}\otimes dx^{0}+A_{i}\,(dx^{0}\otimes dx^{i}%
+dx^{i}\otimes dx^{0})+G_{ij}\,dx^{i}\otimes dx^{j} \label{CircleBundle}%
\end{equation}
by rescaled locally $\mathbb{R}^{N}$-invariant Ricci flow (\ref{RRFS})
modified by diffeomorphisms generated by the vector field $V^{\sharp}$ is
equivalent to the system
\begin{subequations}
\label{MappingTorus}%
\begin{align}
\frac{\partial}{\partial\tau}u  &  =\frac{1}{2}\left\vert \partial
_{0}G\right\vert ^{2}-su+(\mathcal{L}_{V^{\sharp}}\mathbf{g})_{00},\\
\frac{\partial}{\partial\tau}A^{i}  &  =-\frac{1+c}{2}sA^{i}+G^{ij}%
(\mathcal{L}_{V^{\sharp}}\mathbf{g})_{j0},\\
\frac{\partial}{\partial\tau}G_{ij}  &  =u^{-1}\{\nabla_{0}^{2}G_{ij}%
-(\partial_{0}G_{ik})G^{k\ell}(\partial_{0}G_{j\ell})\}+csG_{ij}%
+(\mathcal{L}_{V^{\sharp}}\mathbf{g})_{ij}.
\end{align}
We define%
\end{subequations}
\begin{equation}
V=\frac{C}{2}(\partial_{0}u)\,dx^{0}+(G_{ij}\partial_{0}A^{j})\,dx^{i},
\label{DefineVforMT}%
\end{equation}
where $C>0$ is a constant to be chosen below. Notice that $V$ implements a
DeTurck trick, as was done in Sections~\ref{KappaFlow}--\ref{VolumeFlow}.

Hereafter, we drop the boldface font and write $g$ for $\mathbf{g}$. For later
use, we note that the Christoffel symbols of $g$ are
\begin{subequations}
\label{MTconnection}%
\begin{align}
\Gamma_{00}^{0}  &  =\frac{1}{2}g^{00}\partial_{0}u+g^{0i}\partial_{0}%
(G_{ij}A^{j}),\\
\Gamma_{00}^{k}  &  =\frac{1}{2}g^{k0}\partial_{0}u+g^{ki}\partial_{0}%
(G_{ij}A^{j}),\\
\Gamma_{i0}^{0}  &  =\frac{1}{2}g^{0j}\partial_{0}G_{ij},\\
\Gamma_{ij}^{0}  &  =-\frac{1}{2}g^{00}\partial_{0}G_{ij},\\
\Gamma_{0j}^{k}  &  =\frac{1}{2}g^{k\ell}\partial_{0}G_{j\ell},\\
\Gamma_{ij}^{k}  &  =-\frac{1}{2}g^{k0}\partial_{0}G_{ij}.
\end{align}

\subsection{Holonomy-rescaled flow}

Following Lott \cite{Lott07a}, we say $G$ is a \emph{harmonic-Einstein metric}
if%
\end{subequations}
\begin{equation}
\nabla_{0}^{2}G_{ij}=(\partial_{0}G_{ik})G^{k\ell}(\partial_{0}G_{\ell j}).
\label{Harmonic-Einstein}%
\end{equation}

We henceforth assume that $\mathcal{M}_{\Lambda}$ admits a metric $g$ of the
form (\ref{CircleBundle}) with $G$ harmonic-Einstein. By an initial
reparameterization, we may assume without loss of generality that $u=1$ at
$t=0$. As we shall observe in Lemma~\ref{StationaryMT} below, the
harmonic-Einstein condition implies that $\left\vert \partial_{0}G\right\vert
^{2}$ is constant in space. Motivated by Example~\ref{Sol}, we choose
constants $c=0$ and $s=\frac{1}{2}\left\vert \partial_{0}G\right\vert ^{2}$.
Observe that the constant $s$ is determined by the holonomy of $\mathcal{M}%
_{\Lambda}$, because for any fiber metric $G$, one has%
\begin{equation}
G_{ij}|_{x^{0}=1}=\Lambda_{i}^{k}\Lambda_{j}^{\ell}G_{k\ell}|_{x^{0}=1}.
\label{Holonomy}%
\end{equation}
So when $c$, $s$, and $V$ are chosen in this way, we call system
(\ref{MappingTorus}) \emph{holonomy-rescaled locally }$\mathbb{R}^{N}%
$\emph{-invariant Ricci flow} and denote a solution by $g(\tau)=(u(\tau
),A(\tau),G(\tau))$.

\begin{lemma}
\label{StationaryMT}Let $g$ be a metric of the form (\ref{CircleBundle}) such
that $u=1$, $A$ vanishes, and $G$ is a harmonic-Einstein metric. Then $g$ is a
stationary solution of (\ref{MappingTorus}).
\end{lemma}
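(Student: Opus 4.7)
The plan is a direct verification that each of the three evolution equations in (\ref{MappingTorus}) is identically satisfied by the configuration $u=1$, $A=0$, $G$ harmonic-Einstein, once we fix $c=0$ and $s=\tfrac{1}{2}|\partial_{0}G|^{2}$ as prescribed. The only non-trivial ingredient is confirming that $s$ is actually a constant (rather than a function of $x^{0}$), so that the holonomy-rescaled system is well-defined at this configuration.

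First I would simplify the geometry. Because $u$ is constant and $A=0$, inspection of (\ref{MTconnection}) shows that $\Gamma^{0}_{00}=\Gamma^{k}_{00}=0$, so that $\nabla_{0}^{2}G_{ij}=\partial_{0}^{2}G_{ij}$; likewise $\partial_{0}u=0$ and $\partial_{0}A^{j}=0$, whence the DeTurck $1$-form (\ref{DefineVforMT}) vanishes identically and $\mathcal{L}_{V^{\sharp}}\mathbf{g}\equiv 0$. Thus all Lie-derivative correction terms drop out of (\ref{MappingTorus}).

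Next I would check the three equations in turn. The $A$-equation reduces to $0=-\tfrac{1+c}{2}s\cdot 0 + 0$, which is trivial. The $G$-equation reduces to $u^{-1}\{\partial_{0}^{2}G_{ij}-(\partial_{0}G_{ik})G^{k\ell}(\partial_{0}G_{\ell j})\}+csG_{ij}$, which vanishes by the harmonic-Einstein condition (\ref{Harmonic-Einstein}) together with $c=0$. The $u$-equation reduces to $\tfrac{1}{2}|\partial_{0}G|^{2}-s$, which vanishes by the choice of $s$.

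The one step deserving care is showing that $s=\tfrac{1}{2}|\partial_{0}G|^{2}$ is genuinely a constant along the circle. Writing $H_{ij}=\partial_{0}G_{ij}$ and viewing $G^{-1}H$ as a matrix-valued function of $x^{0}$, the harmonic-Einstein condition becomes $\partial_{0}H=HG^{-1}H$, so that
\begin{equation*}
\partial_{0}(G^{-1}H)=-G^{-1}HG^{-1}H+G^{-1}\partial_{0}H=0.
\end{equation*}
Consequently $|\partial_{0}G|^{2}=\operatorname{tr}((G^{-1}H)^{2})$ is constant in $x^{0}$, which legitimizes the choice of the constant $s$ and completes the verification. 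This last algebraic observation — that harmonic-Einstein forces $G^{-1}\partial_{0}G$ to be covariantly constant along the fibre direction — is the only substantive point in the argument; the remainder is bookkeeping.
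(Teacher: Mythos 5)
Your proof is correct and follows essentially the same route as the paper: observe that $u=1$ and $A=0$ force the DeTurck vector field (\ref{DefineVforMT}) to vanish, that harmonic-Einstein plus $c=0$ kills the $G$-equation, and that the choice $s=\tfrac12\lvert\partial_0 G\rvert^2$ kills the $u$-equation. The one difference is that you supply the matrix computation showing $G^{-1}\partial_0 G$ is constant (hence $\lvert\partial_0 G\rvert^2$ is constant in $x^0$), a fact the paper's proof asserts without detail; this is a welcome, and correct, addition.
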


\begin{proof}
The hypotheses on $u$ and $A$ imply that $V=0$ at $\tau=0$. The fact that $G$
is harmonic-Einstein implies that $\left\vert \partial_{0}G\right\vert ^{2}$
is constant in space. Hence $\frac{1}{2}\left\vert \partial_{0}G\right\vert
^{2}=su$. The result follows.
\end{proof}

Notice that the $\operatorname*{sol}$-geometry manifolds in Example~\ref{Sol}
satisfy the hypotheses of the lemma. There $s$ is the topological constant
$s=4(\log\lambda)^{2}$. Clearly, $s>0$ whenever $\mathcal{M}_{\Lambda}$ has
nontrivial holonomy.

\subsection{Linearization at a stationary solution of holonomy-rescaled flow}

Let $(u+v,B,G+h)$ be a perturbation of a stationary solution $g=(1,0,G)$ of
the type considered in Lemma~\ref{StationaryMT}.

\begin{lemma}
The linearization of holonomy-rescaled locally $\mathbb{R}^{N}$-invariant
Ricci flow (\ref{MappingTorus}) about a stationary solution $(1,0,G)$ with $G$
harmonic-Einstein$\ $is the autonomous, strictly parabolic system%
\[
\frac{\partial}{\partial\tau}%
\begin{pmatrix}
v & B & h
\end{pmatrix}
=\mathbf{L}%
\begin{pmatrix}
v & B & h
\end{pmatrix}
=%
\begin{pmatrix}
\mathbf{L}_{0}v+\mathbf{F}h & \mathbf{L}_{1}B & \mathbf{L}_{2}h
\end{pmatrix}
,
\]
where
\begin{subequations}
\label{LinearMT}%
\begin{align}
\mathbf{L}_{0}v  &  =C\partial_{0}^{2}v-sv,\label{LinearMT-v}\\
\mathbf{F}h  &  =\left\langle \partial_{0}G,\partial_{0}h\right\rangle
-\left\langle \partial_{0}^{2}G,h\right\rangle ,\\
(\mathbf{L}_{1}B)^{i}  &  =\partial_{0}^{2}B^{i}-\frac{s}{2}B^{i}%
,\label{LinearMT-B}\\
(\mathbf{L}_{2}h)_{ij}  &  =\partial_{0}^{2}h_{ij}-G^{k\ell}(\partial
_{0}h_{ik}\partial_{0}G_{\ell j}+\partial_{0}G_{ik}\partial_{0}h_{\ell
j})+\partial_{0}G_{ik}h^{k\ell}\partial_{0}G_{\ell j}. \label{LinearMT-h}%
\end{align}

\end{subequations}
\end{lemma}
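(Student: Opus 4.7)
The plan is to linearize each of the three equations of~(\ref{MappingTorus}) at the stationary solution $(u,A,G) = (1,0,G)$ under the perturbation $(1{+}v,\,B,\,G{+}h)$, using that all unknowns are functions of $x^0$ alone. Three sources of first-order contributions must be tracked: variations of the inverse metric components $g^{\alpha\beta}$, variations of the Christoffel symbols listed in~(\ref{MTconnection}), and variations of the DeTurck field $V^\sharp = g^{\alpha\beta}V_\beta \partial_\alpha$. A crucial preliminary observation is that $V$ vanishes at the stationary solution, so $\delta V = \tfrac{C}{2}(\partial_0 v)\,dx^0 + (G_{ij}\partial_0 B^j)\,dx^i$, and the first variation of the Lie derivative reduces to $\mathcal{L}_{(\delta V)^\sharp}g$ evaluated in the unperturbed background, where $g^{00}=1$, $g^{0i}=0$, $g^{ij}=G^{ij}$ and the only nonzero Christoffels are $\Gamma_{ij}^0 = -\tfrac12\partial_0 G_{ij}$ and $\Gamma_{0j}^k = \tfrac12 G^{k\ell}\partial_0 G_{j\ell}$.

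For the $v$-equation, the $v$-part of $\tfrac12|\partial_0 G|^2 - su$ contributes $-sv$. The $h$-part requires varying $G^{ik}G^{j\ell}\partial_0 G_{ij}\partial_0 G_{k\ell}$, and this is where the harmonic-Einstein condition~(\ref{Harmonic-Einstein}) enters decisively: using $\delta G^{ik} = -h^{ik}$ and recognizing via~(\ref{Harmonic-Einstein}) that $h^{ik}G^{j\ell}\partial_0 G_{ij}\partial_0 G_{k\ell}$ collapses to $\langle \partial_0^2 G, h\rangle$, the variation becomes $-\langle \partial_0^2 G, h\rangle + \langle \partial_0 h, \partial_0 G\rangle$, which is exactly $\mathbf{F}h$. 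The Lie derivative contributes $(\mathcal{L}_{(\delta V)^\sharp}g)_{00} = 2\partial_0(\delta V)_0 = C\partial_0^2 v$, giving $\partial_\tau v = \mathbf{L}_0 v + \mathbf{F}h$. For the $B$-equation, I would use $(\mathcal{L}_X g)_{\alpha\beta} = \partial_\alpha X_\beta + \partial_\beta X_\alpha - 2\Gamma_{\alpha\beta}^\gamma X_\gamma$ for $X = \delta V$; the $(\partial_0 G_{jk})\partial_0 B^k$ term from $\partial_0 (\delta V)_j$ is exactly canceled by $-2\Gamma_{0j}^k(\delta V)_k$, leaving $(\mathcal{L}_{(\delta V)^\sharp}g)_{j0} = G_{jk}\partial_0^2 B^k$. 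Contracting with $G^{ij}$ and combining with $-\tfrac{s}{2}B^i$ yields $\mathbf{L}_1 B$.

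For the $h$-equation, the direct $h$-variation of $\nabla_0^2 G_{ij} - \partial_0 G_{ik}G^{k\ell}\partial_0 G_{j\ell}$ produces the three $h$-terms in~(\ref{LinearMT-h}). There are also two a priori $v$-contributions to check: from $\delta\Gamma_{00}^0 = \tfrac12\partial_0 v$ entering $\nabla_0^2$ via the footnote formula $\nabla_0^2 G_{ij} = \partial_0^2 G_{ij} - \Gamma_{00}^0\partial_0 G_{ij}$, one obtains $-\tfrac12(\partial_0 v)(\partial_0 G_{ij})$, while $(\mathcal{L}_{(\delta V)^\sharp}g)_{ij} = -2\Gamma_{ij}^0(\delta V)_0 = \tfrac{C}{2}(\partial_0 v)(\partial_0 G_{ij})$. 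These cancel exactly when $C=1$, which I expect to be the choice of the constant promised in~(\ref{DefineVforMT}); this decouples $h$ from $v$, so the $h$-equation closes as $\partial_\tau h_{ij} = (\mathbf{L}_2 h)_{ij}$.

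The main obstacle is not any single computation but bookkeeping: ensuring that the $v$- and $B$-channels do not leak into the $h$-channel, and identifying the harmonic-Einstein condition~(\ref{Harmonic-Einstein}) as exactly the structural identity needed both to make $(1,0,G)$ stationary (as in Lemma~\ref{StationaryMT}) and to convert the $G^{-1}$-variation in $\delta|\partial_0 G|^2$ into the clean form $\mathbf{F}h = \langle \partial_0 G,\partial_0 h\rangle - \langle \partial_0^2 G, h\rangle$. Once the three equations are in the stated form, strict parabolicity is immediate from the $\partial_0^2$ principal parts of $\mathbf{L}_0$, $\mathbf{L}_1$, and $\mathbf{L}_2$.
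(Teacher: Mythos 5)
Your overall route is the same as the paper's: perturb $(u,A,G)=(1+\varepsilon v,\varepsilon B,G+\varepsilon h)$, observe that $V$ vanishes at the stationary solution so only $\mathcal{L}_{(\delta V)^{\sharp}}\mathbf{g}$ computed in the background metric enters at first order, read off the background Christoffel symbols from (\ref{MTconnection}), and invoke the harmonic--Einstein identity (\ref{Harmonic-Einstein}) both to convert $\delta\bigl(\tfrac12|\partial_0 G|^2\bigr)$ into $\mathbf{F}h$ and (together with Lemma~\ref{StationaryMT}) to kill the $\delta(u^{-1})$ contribution in the $G$-equation. Your handling of the $00$- and $j0$-components, including the cancellation that leaves $G_{jk}\partial_0^2 B^k$, coincides with the paper's proof.

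The one point of divergence is the coefficient of the $(\partial_0 v)(\partial_0 G_{ij})$ term produced by the DeTurck correction in the $h$-row. You obtain $(\mathcal{L}_{(\delta V)^{\sharp}}\mathbf{g})_{ij}=\tfrac{C}{2}(\partial_0 v)(\partial_0 G_{ij})$, which is indeed what the direct computation gives (at the background, $(\mathcal{L}_X\mathbf{g})_{ij}=X^0\partial_0 G_{ij}$ with $X^0=\tfrac{C}{2}\partial_0 v$), whereas the paper's proof records $\tfrac12(\partial_0 v)(\partial_0 G_{ij})$, so that the cancellation against the $-\tfrac12(\partial_0 v)(\partial_0 G_{ij})$ coming from $\delta(\nabla_0^2 G_{ij})$ holds for \emph{every} $C$. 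You resolve the mismatch by declaring $C=1$ and asserting that this is ``the choice promised'' in (\ref{DefineVforMT}); it is not. The paper deliberately leaves $C$ free in this lemma because the subsequent weak-linear-stability lemma requires $C\geq\kappa^{2}/(2\lambda)$, a constant depending on $G$ that in general is not $1$, and its quadratic-form estimate relies on the triangular structure in which the only off-diagonal coupling is $\mathbf{F}h$ in the $v$-row. So, as written, your argument establishes the stated linearization only for $C=1$; for the general $C$ that the rest of the paper uses, your computation leaves an uncancelled coupling $\tfrac{C-1}{2}(\partial_0 v)\,\partial_0 G_{ij}$ in the $h$-equation, which would either have to appear in the lemma (altering the structure of $\mathbf{L}$ and the later stability estimate) or be shown to be harmless. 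You have in effect located a genuine tension between this lemma and the later choice of $C$; make that explicit and verify the coefficient carefully, rather than absorbing it into an expectation about how $C$ will be chosen.
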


\begin{proof}
One may suppose that $\left.  \frac{\partial}{\partial\varepsilon}\right\vert
_{\varepsilon=0}u=v$, $\left.  \frac{\partial}{\partial\varepsilon}\right\vert
_{\varepsilon=0}A=B$, and $\left.  \frac{\partial}{\partial\varepsilon
}\right\vert _{\varepsilon=0}G=h$. We first compute that%
\[
\left.  \frac{\partial}{\partial\varepsilon}\right\vert _{\varepsilon
=0}V=\frac{C}{2}\partial_{0}v\,dx^{0}+G_{ij}\partial_{0}B^{j}\,dx^{i}%
\]
and%
\begin{align*}
\left.  \frac{\partial}{\partial\varepsilon}\right\vert _{\varepsilon
=0}(\mathcal{L}_{V}g)_{00}  &  =C\partial_{0}^{2}v,\\
\left.  \frac{\partial}{\partial\varepsilon}\right\vert _{\varepsilon
=0}(\mathcal{L}_{V}g)_{j0}  &  =G_{ij}\partial_{0}^{2}B^{i},\\
\left.  \frac{\partial}{\partial\varepsilon}\right\vert _{\varepsilon
=0}(\mathcal{L}_{V}g)_{ij}  &  =\frac{1}{2}(\partial_{0}v)(\partial_{0}%
G_{ij}).
\end{align*}
Note that we used (\ref{MTconnection}) to get the last equality. Using the
fact that $G$ is harmonic-Einstein, we obtain%
\[
\left.  \frac{\partial}{\partial\varepsilon}\right\vert _{\varepsilon
=0}\left\vert \partial_{0}G\right\vert ^{2}=2\left\langle \partial
_{0}G,\partial_{0}h\right\rangle -2\left\langle \partial_{0}^{2}%
G,h\right\rangle ,
\]
where the inner products are taken with respect to $G$. Observing that%
\[
\left.  \frac{\partial}{\partial\varepsilon}\right\vert _{\varepsilon=0}%
\nabla_{0}^{2}G_{ij}=\partial_{0}^{2}h_{ij}-(\partial_{0}G_{ij})\left(
\left.  \frac{\partial}{\partial\varepsilon}\right\vert _{\varepsilon=0}%
\Gamma_{00}^{0}\right)  =\partial_{0}^{2}h_{ij}-\frac{1}{2}(\partial
_{0}v)(\partial_{0}G_{ij}),
\]
we conclude that%
\begin{multline*}
\left.  \frac{\partial}{\partial\varepsilon}\right\vert _{\varepsilon
=0}\left\{  u^{-1}[\nabla_{0}^{2}G_{ij}-(\partial_{0}G_{ik})G^{k\ell}%
(\partial_{0}G_{j\ell})]\right\} \\
=\partial_{0}^{2}h_{ij}-G^{k\ell}(\partial_{0}h_{ik}\partial_{0}G_{\ell
j}+\partial_{0}G_{ik}\partial_{0}h_{\ell j})+\partial_{0}G_{ik}h^{k\ell
}\partial_{0}G_{\ell j}-\frac{1}{2}(\partial_{0}v)(\partial_{0}G_{ij}).
\end{multline*}
The result follows.
\end{proof}

\subsection{Linear stability of holonomy-rescaled flow}

To investigate linear stability of (\ref{LinearMT}), it is convenient to `cut'
the bundle $\mathcal{M}_{\Lambda}$ and consider tensor fields depending on
$x^{0}\in\lbrack0,1]$. By our assumption that $\mathcal{M}_{\Lambda}$ admits a
flat connection, we may henceforth assume that the $\mathbb{R}^{N}$-valued
$1$-form $A$ has trivial holonomy, i.e.~that $A^{i}|_{x^{0}=1}=A^{i}%
|_{x^{0}=0}$, while $G$ satisfies (\ref{Holonomy}) for a nontrivial action of
$\Lambda\in\operatorname*{Gl}(N,\mathbb{R})$. Notice that these assumptions
are satisfied by $\operatorname*{sol}^{3}$-twisted torus bundles, of which
Example~\ref{Sol} is a special case.

Corresponding to these assumptions, we require $v$ and $B$ to satisfy the
boundary conditions%
\begin{align}
v|_{x^{0}=1}  &  =v|_{x^{0}=0},\label{MTboundary-v}\\
B^{i}|_{x^{0}=1}  &  =B^{i}|_{x^{0}=0}, \label{MTboundary-B}%
\end{align}
respectively. We require $h$ to satisfy the linear compatibility condition
\begin{equation}
h_{ij}|_{x^{0}=1}=\Lambda_{i}^{k}\Lambda_{j}^{\ell}h_{k\ell}|_{x^{0}=0}
\label{MTboundary-h}%
\end{equation}
derived from (\ref{Holonomy}).

For now, we apply the differential operators in (\ref{LinearMT}) only to
smooth data satisfying these boundary conditions. Below, we will specify
larger domains in which smooth representatives are dense. With this intention,
we now investigate the linear stability of (\ref{LinearMT}). We will
eventually have to consider the entire action of the lower-triangular operator
$\mathbf{L}$, but we begin with observations about its component operators.
The first two are self evident.

\begin{lemma}
$\mathbf{L}_{0}$ is self adjoint with respect to the inner product%
\[
\left(  \varphi,\psi\right)  =\int_{0}^{1}\varphi\psi\,d\xi
\]
and is strictly linearly stable if $s>0$.
\end{lemma}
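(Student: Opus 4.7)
The operator $\mathbf{L}_0 \varphi = C\partial_0^2\varphi - s\varphi$ is a constant-coefficient second-order operator on the circle $\mathcal{S}^1 \approx \mathbb{R}/\mathbb{Z}$, so both claims are essentially classical Sturm--Liouville facts. The plan is to prove self-adjointness by integration by parts using the periodic boundary condition (\ref{MTboundary-v}), and to obtain strict stability by computing the Dirichlet-type energy $(\mathbf{L}_0\varphi,\varphi)$.

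For self-adjointness, I would take $\varphi,\psi$ smooth representatives satisfying (\ref{MTboundary-v})--equivalently, smooth functions on the circle--and compute
\[
(\mathbf{L}_0\varphi,\psi) = \int_0^1 \bigl(C\partial_0^2\varphi - s\varphi\bigr)\psi\,d\xi.
\]
Integrating by parts twice yields
\[
(\mathbf{L}_0\varphi,\psi) = \bigl[C(\partial_0\varphi)\psi - C\varphi(\partial_0\psi)\bigr]_0^1 + \int_0^1 \varphi\bigl(C\partial_0^2\psi - s\psi\bigr)\,d\xi.
\]
Smoothness on $\mathcal{S}^1$ makes the boundary contribution vanish, giving $(\mathbf{L}_0\varphi,\psi) = (\varphi,\mathbf{L}_0\psi)$.

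For strict stability, a single integration by parts yields
\[
(\mathbf{L}_0\varphi,\varphi) = -C\|\partial_0\varphi\|^2 - s\|\varphi\|^2 \leq -s\|\varphi\|^2,
\]
so every eigenvalue of $\mathbf{L}_0$ is $\leq -s < 0$. Alternatively--and more sharply--since $\mathbf{L}_0$ has constant coefficients, the Fourier basis $\{e^{2\pi i k\xi}\}_{k\in\mathbb{Z}}$ diagonalizes it, with eigenvalues $-C(2\pi k)^2 - s$; the supremum over $k$ is $-s < 0$, attained on the constants. This simultaneously verifies self-adjointness (real spectrum, orthonormal eigenbasis) and exhibits a uniform spectral gap $\lambda_{\mathrm{s}} = s > 0$ separating $\sigma(\mathbf{L}_0)$ from the imaginary axis.

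Neither step presents any real obstacle; the only point worth being careful about is that (\ref{MTboundary-v}) is not merely a pointwise matching condition but the periodicity needed to kill the boundary terms under repeated integration by parts. Once one records that the smooth domain defining $\mathbf{L}_0$ consists of functions on the identified interval whose derivatives of all orders agree at $x^0=0$ and $x^0=1$, both claims of the lemma follow immediately from the two displays above.
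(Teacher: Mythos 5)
Your proof is correct: the paper offers no argument for this lemma (it is dismissed with ``The first two are self evident''), and your integration-by-parts computation together with the Fourier diagonalization is exactly the routine verification being taken for granted. Your parenthetical care about the boundary condition — that smoothness on $\mathcal{S}^{1}$, not merely the matching of values in (\ref{MTboundary-v}), is what kills the boundary terms — is a fair and correct observation, and the conclusion that the spectrum is $\{-4\pi^{2}k^{2}C-s\}_{k\in\mathbb{Z}}$, hence bounded above by $-s<0$, matches the strict stability claimed.
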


\begin{lemma}
$\mathbf{L}_{1}$ is self adjoint with respect to the inner product%
\[
\left(  \varphi,\psi\right)  =\int_{0}^{1}\varphi^{i}\psi^{j}\delta_{ij}\,d\xi
\]
and is strictly linearly stable if $s>0$.
\end{lemma}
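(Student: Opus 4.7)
The plan is to verify the two assertions in sequence by treating the components $B^i$ as a collection of scalar functions on $[0,1]$ subject to periodic boundary conditions (\ref{MTboundary-B}).

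\textbf{Self-adjointness.} First I would integrate by parts twice in the defining integral. Concretely, for smooth $B, C$ satisfying (\ref{MTboundary-B}), I would compute
\begin{equation*}
(\mathbf{L}_1 B, C) = \int_0^1 \bigl(\partial_0^2 B^i - \tfrac{s}{2}B^i\bigr) C^j \delta_{ij}\, d\xi.
\end{equation*}
Integration by parts produces boundary terms of the form $\bigl[(\partial_0 B^i)C^j\delta_{ij}\bigr]_0^1$ and $\bigl[B^i(\partial_0 C^j)\delta_{ij}\bigr]_0^1$. Because $\Lambda$ acts trivially on $A$ (so that $B^i$ and $C^i$ and their $x^0$-derivatives are periodic in $x^0$), both boundary terms vanish, and the symmetric expression
\begin{equation*}
(\mathbf{L}_1 B, C) = -\int_0^1 \delta_{ij}\bigl(\partial_0 B^i\,\partial_0 C^j + \tfrac{s}{2} B^i C^j\bigr)\, d\xi = (B, \mathbf{L}_1 C)
\end{equation*}
establishes self-adjointness.

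\textbf{Strict stability.} Taking $C = B$ in the identity just derived gives
\begin{equation*}
(\mathbf{L}_1 B, B) = -\int_0^1 \bigl(|\partial_0 B|^2 + \tfrac{s}{2}|B|^2\bigr)\, d\xi \leq -\tfrac{s}{2}\|B\|^2,
\end{equation*}
where $|\cdot|$ denotes the Euclidean norm on $\mathbb{R}^N$. Since $\mathbf{L}_1$ is self adjoint, this Rayleigh quotient bound shows that every eigenvalue lies in $(-\infty, -s/2]$. Alternatively, I would expand each component $B^i$ in a Fourier series on $\mathcal{S}^1 = [0,1]/\sim$, whereupon $\mathbf{L}_1$ is diagonalized with eigenvalues $-(2\pi k)^2 - s/2$ for $k \in \mathbb{Z}$; the top eigenvalue is $-s/2$, which is strictly negative when $s > 0$.

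\textbf{Anticipated obstacles.} Neither step is hard: the operator decouples the $N$ components of $B$, and the periodic boundary conditions are precisely what is needed to kill boundary terms. The only point requiring a brief justification is that the boundary data for $B$ are periodic (unlike $h$, which twists by $\Lambda$), and this is built into the compatibility condition (\ref{MTboundary-B}) derived from the assumption that $\mathcal{M}_\Lambda$ admits a flat connection. The result then extends from the dense subspace of smooth periodic $B$ to the relevant little-H\"older completion by the same density argument used elsewhere in the paper.
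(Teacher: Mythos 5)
Your proof is correct and is exactly the routine verification the paper leaves implicit: the paper simply declares this lemma (and the one for $\mathbf{L}_{0}$) ``self evident,'' and your integration by parts under the periodic boundary condition (\ref{MTboundary-B}), followed by the Rayleigh-quotient (or Fourier) bound $(\mathbf{L}_{1}B,B)\leq-\tfrac{s}{2}\left\Vert B\right\Vert ^{2}$, is the intended argument. No issues.
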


\begin{lemma}
$\mathbf{L}_{2}$ is self adjoint with respect to the inner product%
\[
\left(  \varphi,\psi\right)  =\int_{0}^{1}\left\langle \varphi,\psi
\right\rangle \,d\xi=\int_{0}^{1}\varphi_{ij}\psi_{k\ell}G^{ik}G^{j\ell}\,d\xi
\]
and$\mathbf{\ }$is weakly linearly stable. Its null eigenspace is%
\[
\{GM:M\in\operatorname*{gl}(N,\mathbb{R})\text{ commutes with all }%
G(\cdot)\}.
\]

\end{lemma}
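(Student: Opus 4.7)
The plan is to conjugate $\mathbf{L}_{2}$ by an $x^{0}$-dependent change of frame that simultaneously exhibits self-adjointness and non-positivity. First, introduce matrix notation: let $H := G^{-1}h$ and $P := G^{-1}\partial_{0}G$. Symmetry of $\partial_{0}G$ yields $GP = P^{T}G$, so $P$ is $G$-self-adjoint. The harmonic-Einstein identity (\ref{Harmonic-Einstein}) reads $\partial_{0}G\cdot G^{-1}\cdot\partial_{0}G = \partial_{0}^{2}G$ and therefore forces $\partial_{0}P = -P^{2} + G^{-1}(\partial_{0}G\,G^{-1}\partial_{0}G) = 0$, so $P$ is constant in $x^{0}$. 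A direct rewriting of (\ref{LinearMT-h}) using these two facts gives
\[
G^{-1}\mathbf{L}_{2}h \;=\; \partial_{0}^{2}H + [P,\partial_{0}H].
\]

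Next, choose a $G(0)$-orthonormal basis of $\mathbb{R}^{N}$, so that $G(0)=I$, $P=P^{T}$, and $G(x^{0})=e^{x^{0}P}$. The compatibility $G(1)=\Lambda G(0)\Lambda^{T}$ becomes $\Lambda\Lambda^{T} = e^{P}$; by polar decomposition, $\Lambda = e^{P/2}O$ for some $O\in\operatorname{O}(N)$. Now define
\[
\tilde{H}(x^{0}) := e^{x^{0}P/2}\,H(x^{0})\,e^{-x^{0}P/2}.
\]
Using $G$-self-adjointness of $H$ and $P=P^{T}$, one checks that $\tilde{H}=\tilde{H}^{T}$ in the usual Frobenius sense; the boundary condition (\ref{MTboundary-h}) translates to $\tilde{H}(1)=O\tilde{H}(0)O^{T}$; and, since $P$ commutes with $e^{x^{0}P/2}$, a direct calculation yields
\[
G^{-1}\mathbf{L}_{2}h \;=\; e^{-x^{0}P/2}\bigl(\partial_{0}^{2}\tilde{H} - \tfrac{1}{4}\operatorname{ad}_{P}^{2}\tilde{H}\bigr)e^{x^{0}P/2}.
\]

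By cyclicity of the trace, $(\mathbf{L}_{2}h,k)=\int_{0}^{1}\operatorname{tr}\bigl[(\partial_{0}^{2}\tilde{H} - \tfrac{1}{4}\operatorname{ad}_{P}^{2}\tilde{H})\tilde{K}\bigr]\,dx^{0}$. Integrate by parts twice in the first piece: the boundary contributions $[\operatorname{tr}(\partial_{0}\tilde{H}\cdot\tilde{K})]_{0}^{1}$ and $[\operatorname{tr}(\tilde{H}\cdot\partial_{0}\tilde{K})]_{0}^{1}$ both vanish because $\tilde{H}$, $\tilde{K}$ and their derivatives all transform by the common orthogonal conjugation $X\mapsto OXO^{T}$, which cyclicity absorbs. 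For the second piece, $P=P^{T}$ makes $\operatorname{ad}_{P}$ self-adjoint with respect to the Frobenius inner product on $\operatorname*{gl}(N,\mathbb{R})$, so $\operatorname{tr}(\operatorname{ad}_{P}^{2}\tilde{H}\cdot\tilde{K}) = \operatorname{tr}(\operatorname{ad}_{P}\tilde{H}\cdot\operatorname{ad}_{P}\tilde{K})$. The resulting expression is manifestly symmetric in $\tilde{H},\tilde{K}$, proving self-adjointness; setting $k=h$ yields
\[
(\mathbf{L}_{2}h,h) \;=\; -\int_{0}^{1}\Bigl(\|\partial_{0}\tilde{H}\|_{F}^{2} + \tfrac{1}{4}\|[P,\tilde{H}]\|_{F}^{2}\Bigr)\,dx^{0} \;\le\; 0,
\]
hence weak linear stability. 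Equality forces $\partial_{0}\tilde{H}\equiv 0$ and $[P,\tilde{H}]\equiv 0$, so $\tilde{H}\equiv M$ is a constant matrix commuting with $P$; then $H\equiv M$ and $h=GM$. The boundary condition collapses to $M=OMO^{T}$, which combined with $[M,P]=0$ is equivalent to $[M,\Lambda]=0$. Since $G(x^{0})=G(0)e^{x^{0}P}$, this is exactly the condition that $M$ commutes with every value of $G(\cdot)$ on the mapping torus, identifying the null eigenspace as claimed.

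The main technical obstacle is discovering the conjugation $\tilde{H}=e^{x^{0}P/2}He^{-x^{0}P/2}$ in the second paragraph: one needs a single $x^{0}$-dependent frame change that simultaneously (i) converts the $G$-self-adjoint $H$ into a genuinely symmetric $\tilde{H}$, (ii) eliminates the first-order commutator $[P,\partial_{0}\cdot]$ and leaves the manifestly non-positive constant-coefficient operator $\partial_{0}^{2}-\tfrac{1}{4}\operatorname{ad}_{P}^{2}$, and (iii) turns the holonomy-twisted boundary condition into one under which integration-by-parts boundary terms cancel. All three requirements mesh precisely because the harmonic-Einstein hypothesis forces $P$ to be constant.
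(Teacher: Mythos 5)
Your argument is correct, but it takes a genuinely different route from the paper. The paper works directly with the components: it checks that the holonomy boundary terms cancel in integration by parts, expands $(\mathbf{L}_2h,h)=I_1-2I_2+I_3$, uses the harmonic--Einstein identity to rewrite $(\partial_0^2G\circ h,h)=\Vert\partial_0G\circ h\Vert^2$, and concludes nonpositivity by Cauchy--Schwarz, with the equality case giving $G^{-1}h$ constant in $x^0$. You instead observe that harmonic--Einstein forces $P=G^{-1}\partial_0G$ to be constant, so $G=G(0)e^{x^0P}$, and conjugate by $e^{x^0P/2}$ to reduce $\mathbf{L}_2$ to the constant-coefficient operator $\partial_0^2-\tfrac14\operatorname{ad}_P^2$ acting on genuinely symmetric matrices with an orthogonally twisted boundary condition. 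What this buys is a manifestly symmetric, manifestly nonpositive Dirichlet form (no Cauchy--Schwarz needed), an explicit spectral picture in terms of $P$, and a transparent description of the holonomy via $\Lambda=e^{P/2}O$; the paper's computation is more elementary and stays in the original frame but hides the structure you make explicit. The equality analysis and the identification of the nullspace with $\{GM:[M,P]=0\}$ subject to the holonomy compatibility agree with the paper's statement.

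Two small points you should make explicit. First, your cancellation of the boundary terms uses that $\partial_0\tilde H(1)=O\,\partial_0\tilde H(0)\,O^{T}$, and verifying this requires $[P,O]=0$ (equivalently $[P,\Lambda]=0$); this is true, but it does not follow from $\Lambda\Lambda^{T}=e^{P}$ alone---it follows from differentiating the twisted periodicity $\Lambda e^{x^0P}\Lambda^{T}=e^{(x^0+1)P}$ of the metric $G$ on the mapping torus (the same smoothness-across-the-gluing fact the paper uses for $\partial_0\varphi$). Second, with the plain trace pairing one has $\operatorname{tr}(\operatorname{ad}_P^2\tilde H\cdot\tilde K)=-\operatorname{tr}(\operatorname{ad}_P\tilde H\cdot\operatorname{ad}_P\tilde K)$, so your intermediate identity should be read in the Frobenius pairing (with a transpose); since $[P,\tilde H]$ is antisymmetric, $\operatorname{tr}([P,\tilde H]^2)=-\Vert[P,\tilde H]\Vert_F^2$ and your final displayed formula, and hence the stability conclusion, is correct as stated.
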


\begin{proof}
If $\varphi$ and $\psi$ satisfy (\ref{MTboundary-h}) and are smooth, then
$\partial_{0}\varphi_{ij}|_{x^{0}=1}=\Lambda_{i}^{k}\Lambda_{j}^{\ell}%
\partial_{0}\varphi_{k\ell}|_{x^{0}=0}$ and $\partial_{0}\psi_{ij}|_{x^{0}%
=1}=\Lambda_{i}^{k}\Lambda_{j}^{\ell}\partial_{0}\psi_{k\ell}|_{x^{0}=0}$. So
if $G$ satisfies (\ref{Holonomy}), then all boundary terms arising from
integrations by parts vanish. Using this fact, it is easy to verify that
\[
\left(  \mathbf{L}_{2}\varphi,\psi\right)  =\left(  \varphi,\mathbf{L}_{2}%
\psi\right)  .
\]

Now we write $\left(  \mathbf{L}_{2}h,h\right)  =I_{1}-2I_{2}+I_{3}$, where
(using primes to denote differentiation with respect to $x^{0}=\xi$ inside an
integral) the distinct terms are%
\begin{align*}
I_{1}  &  =\int_{0}^{1}h_{ij}^{\prime\prime}h_{k\ell}G^{ik}G^{j\ell}\,d\xi,\\
I_{2}  &  =\int_{0}^{1}h_{ip}^{\prime}G^{pq}G_{qj}^{\prime}h_{k\ell}%
G^{ik}G^{jl}\,d\xi,\\
I_{3}  &  =\int_{0}^{1}G_{ip}^{\prime}G^{pq}h_{qj}G_{kr}^{\prime}%
G^{rs}h_{s\ell}G^{i\ell}G^{jk}\,d\xi.
\end{align*}
An integration by parts shows that%
\[
I_{1}=-\left\Vert \partial_{0}h\right\Vert ^{2}+2I_{2}.
\]
The integral $I_{3}$, which may be rewritten as%
\[
I_{3}=\left(  \partial_{0}G\circ h,h\circ\partial_{0}G\right)  ,
\]
requires more work. Integrating it by parts produces seven terms:%
\[
I_{3}=2I_{3}+2\left\Vert \partial_{0}G\circ h\right\Vert ^{2}-2\left(
\partial_{0}G\circ h,\partial_{0}h\right)  -\left(  \partial_{0}^{2}G\circ
h,h\right)  ,
\]
Because $G$ satisfies the harmonic-Einstein equation (\ref{Harmonic-Einstein}%
), the final term above may be rewritten as%
\[
\left(  \partial_{0}^{2}G\circ h,h\right)  =\left\Vert \partial_{0}G\circ
h\right\Vert ^{2}.
\]
Collecting terms and applying Cauchy--Schwarz, we obtain%
\begin{align*}
\left(  \mathbf{L}_{2}h,h\right)   &  =-\left\Vert \partial_{0}h\right\Vert
^{2}-\left\Vert \partial_{0}G\circ h\right\Vert ^{2}+2\left(  \partial
_{0}G\circ h,\partial_{0}h\right) \\
&  \leq-\left\Vert \partial_{0}h\right\Vert ^{2}-\left\Vert \partial_{0}G\circ
h\right\Vert ^{2}+2\left\Vert \partial_{0}G\circ h\right\Vert \left\Vert
\partial_{0}h\right\Vert \leq0.
\end{align*}
One has equality if and only if $h_{j}^{k}=G^{k\ell}h_{\ell j}$ is independent
of $x^{0}$, hence if and only if $h_{ij}=G_{ik}M_{j}^{k}$ for some
$M\in\operatorname*{gl}(N,\mathbb{R})$ that commutes with all $G(x^{0})$.
\end{proof}

\begin{remark}
An element $GM$ of the nullspace of $\mathbf{L}_{2}$ will itself satisfy the
harmonic-Einstein equation (\ref{Harmonic-Einstein}) if it is a metric,
i.e.~if $M$ is invertible.
\end{remark}

\begin{remark}
For the $\operatorname*{sol}$-Gowdy metrics considered in Example~\ref{Sol},
it is easy to verify that the nullspace of $\mathbf{L}_{2}$ is two-dimensional.
\end{remark}

Now we extend the $L^{2}$ inner products above to vectors $%
\begin{pmatrix}
v & B & h
\end{pmatrix}
$ in the natural way, i.e.%
\[
\left(
\begin{pmatrix}
v & B & h
\end{pmatrix}
,%
\begin{pmatrix}
\alpha & \beta & \gamma
\end{pmatrix}
\right)  =\left(  v,\alpha\right)  +\left(  B,\beta\right)  +\left(
h,\gamma\right)  .
\]
The lemma below is the main observation of this subsection. Observe that it
does not directly imply convergence of the nonlinear system
(\ref{MappingTorus}), because $\mathbf{L}$ is not self adjoint. In the next
subsection, after introducing suitable function spaces, we shall overcome this
difficulty by exhibiting $\mathbf{L}$ as a sectorial perturbation of the
self-adjoint operator%
\begin{equation}
\mathbf{L}_{\mathrm{sa}}:%
\begin{pmatrix}
v & B & h
\end{pmatrix}
\mapsto%
\begin{pmatrix}
\mathbf{L}_{0}v & \mathbf{L}_{1}B & \mathbf{L}_{2}h
\end{pmatrix}
. \label{LSA}%
\end{equation}

\begin{lemma}
Assume that $G$ has nontrivial holonomy $\Lambda\in\operatorname*{Gl}%
(N,\mathbb{R})$ but admits a flat connection. Then there exists $C>0$
depending only on $G$ such that the linearization $\mathbf{L}$ of
holonomy-rescaled locally $\mathbb{R}^{N}$-invariant Ricci flow is weakly
linearly stable with respect to the DeTurck diffeomorphisms induced by
$V=V(C)$ defined in (\ref{DefineVforMT}). Its nullspace is%
\[
\{GM:M\in\operatorname*{gl}(N,\mathbb{R})\text{ commutes with all }%
G(\cdot)\}.
\]

\end{lemma}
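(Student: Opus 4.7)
The plan is to exploit the block-triangular structure of $\mathbf{L}$. With respect to the decomposition $(v,B,h)$ and the boundary conditions (\ref{MTboundary-v})--(\ref{MTboundary-h}), the linearization takes the form
\[
\mathbf{L}\;=\;\begin{pmatrix} \mathbf{L}_{0} & 0 & \mathbf{F} \\ 0 & \mathbf{L}_{1} & 0 \\ 0 & 0 & \mathbf{L}_{2} \end{pmatrix},
\]
whose only off-diagonal coupling is the first-order operator $\mathbf{F}$ sending $h$ into the $v$-equation. Consequently, inverting $\lambda\mathbf{I}-\mathbf{L}$ reduces to inverting the three diagonal blocks in turn and then using $\mathbf{F}$ to transport the $h$-data back into the $v$-component, which yields the spectral inclusion $\sigma(\mathbf{L})\subseteq\sigma(\mathbf{L}_{0})\cup\sigma(\mathbf{L}_{1})\cup\sigma(\mathbf{L}_{2})$.

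Weak stability then follows by choosing any $C>0$ (so that $\mathbf{L}_{0}=C\partial_{0}^{2}-s$ is strictly elliptic), noting that $s=\tfrac{1}{2}|\partial_{0}G|^{2}>0$ because the holonomy $\Lambda$ is nontrivial, and invoking the three preceding lemmas: $\sigma(\mathbf{L}_{0})\subset(-\infty,-s]$, $\sigma(\mathbf{L}_{1})\subset(-\infty,-s/2]$, and $\sigma(\mathbf{L}_{2})\subset(-\infty,0]$ with the stated kernel. Combined with the spectral inclusion above, this gives $\sigma(\mathbf{L})\subset\{\operatorname{Re}z\le 0\}$.

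To identify the nullspace of $\mathbf{L}$, suppose $\mathbf{L}(v,B,h)=0$. The second and third components immediately yield $B=0$ and $h=GM$ from the preceding lemmas. The first equation then demands $\mathbf{L}_{0}v=-\mathbf{F}(GM)$; since $\mathbf{L}_{0}$ is invertible when $s>0$, the claim that the nullspace equals $\{GM\}$ reduces to the identity $\mathbf{F}(GM)=0$. I would verify this directly: for $h=GM$ with $M$ constant one has $\partial_{0}h=(\partial_{0}G)M$ and $\partial_{0}^{2}h=(\partial_{0}^{2}G)M$, while the harmonic-Einstein equation reads $\partial_{0}^{2}G=(\partial_{0}G)G^{-1}(\partial_{0}G)$ (using $\Gamma_{00}^{0}=0$ at the stationary solution). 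Expressing the inner product on symmetric two-tensors as $\langle S,T\rangle=\operatorname{tr}(G^{-1}SG^{-1}T)$ and setting $A:=G^{-1}\partial_{0}G$, both $\langle\partial_{0}G,\partial_{0}h\rangle$ and $\langle\partial_{0}^{2}G,h\rangle$ collapse to $\operatorname{tr}(A^{2}M)$, and so cancel in $\mathbf{F}h$.

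The essential difficulty is that the coupling term $\mathbf{F}h$ breaks self-adjointness of $\mathbf{L}$; a priori this could enlarge the nullspace by forcing a nonzero $v$-component for each $h\in\ker\mathbf{L}_{2}$. The decisive point is that the harmonic-Einstein condition on the base solution is precisely what is needed to make $\mathbf{F}$ annihilate $\ker\mathbf{L}_{2}$. The freedom in choosing $C$ plays no role for weak stability alone---any $C>0$ suffices---but will be exploited in the following subsection, where $\mathbf{L}$ must be realized as a sectorial perturbation of the self-adjoint operator $\mathbf{L}_{\mathrm{sa}}$ from (\ref{LSA}) in the maximal regularity framework of Section~\ref{Theory}.
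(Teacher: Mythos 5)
Your proof is correct, but it takes a genuinely different route from the paper. The paper argues via an $L^{2}$ energy (numerical range) estimate: it splits $h=h^{\circ}+h^{\perp}$ along the kernel of $\mathbf{L}_{2}$ and its orthogonal complement, uses the spectral gap $(\mathbf{L}_{2}h^{\perp},h^{\perp})\leq-\lambda\Vert h^{\perp}\Vert^{2}$, integrates $(\mathbf{F}h,v)$ by parts using the harmonic-Einstein identity to get $(\mathbf{F}h,v)=-\int_{0}^{1}\langle G^{\prime},h\rangle v^{\prime}\,d\xi$ with $|\langle G^{\prime},h\rangle|\leq\kappa|h|$, and then absorbs the cross term by taking $C\geq\kappa^{2}/(2\lambda)$; weak stability and the kernel identification come out of the inequality and its equality case. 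You instead exploit the triangular structure: since the only coupling is $\mathbf{F}$ from the $h$-block into the $v$-equation, the resolvent of $\lambda\mathbf{I}-\mathbf{L}$ is built by inverting $\mathbf{L}_{2}$, $\mathbf{L}_{1}$, $\mathbf{L}_{0}$ in turn, giving $\sigma(\mathbf{L})\subseteq\sigma(\mathbf{L}_{0})\cup\sigma(\mathbf{L}_{1})\cup\sigma(\mathbf{L}_{2})$ for \emph{every} $C>0$, and the kernel is pinned down by the computation $\mathbf{F}(GM)=0$, which you verify exactly as the paper does (via $\partial_{0}^{2}G=\partial_{0}G\,G^{-1}\partial_{0}G$, so both terms equal $\operatorname*{tr}(A^{2}M)$ with $A=G^{-1}\partial_{0}G$). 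Your route is cleaner for the statement as literally formulated (weak stability is a purely spectral notion here), shows the largeness of $C$ is irrelevant for it, and moreover locates $\sigma(\mathbf{L})$ inside the real, nonpositive spectra of the self-adjoint blocks, which (by discreteness of those spectra) already yields the gap away from the imaginary axis needed later for Simonett's theorem; what the paper's choice of large $C$ buys, and what your argument does not give, is the stronger dissipativity statement that the $L^{2}$ quadratic form of $\mathbf{L}$ itself is nonpositive, which is why the lemma is phrased as ``there exists $C>0$ depending only on $G$.'' One small point you should make explicit: since the operators are unbounded, the triangular resolvent formula requires $\mathbf{F}$ to be bounded from the graph-norm domain of $\mathbf{L}_{2}$ (indeed from $\Sigma_{2}^{1+\alpha}$, as it is first order with smooth coefficients) into the base space for $v$, so that $v=(\lambda-\mathbf{L}_{0})^{-1}\bigl(f+\mathbf{F}(\lambda-\mathbf{L}_{2})^{-1}k\bigr)$ defines a bounded inverse; this is exactly the boundedness of $\mathbf{F}$ that the paper records in Step~1 of the proof of Theorem~\ref{SolIsStable}, so it is a gap of exposition rather than of substance.
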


\begin{proof}
First suppose that $h$ belongs to the null eigenspace of $\mathbf{L}_{2}$.
Then because $G$ is harmonic-Einstein and $h=GM$, one has $\left\langle
\partial_{0}^{2}G,h\right\rangle =\left\langle \partial_{0}G,\partial
_{0}h\right\rangle $. Hence $\mathbf{F}h=0$ and so%
\[
\left(  \mathbf{L}%
\begin{pmatrix}
v & B & h
\end{pmatrix}
,%
\begin{pmatrix}
v & B & h
\end{pmatrix}
\right)  =-C\left\Vert \partial_{0}v\right\Vert ^{2}-s\left\Vert v\right\Vert
^{2}-\left\Vert \partial_{0}B\right\Vert ^{2}-\frac{s}{2}\left\Vert
B\right\Vert ^{2}.
\]
This is strictly negative unless $v$ and $B$ vanish identically.

For the general case, write $h=h^{\circ}+h^{\perp}$, where $h^{\circ}$ belongs
to the nullspace of $\mathbf{L}_{2}$ and $h^{\perp}$ belongs to its orthogonal
complement. The self-adjoint elliptic operator $\mathbf{L}_{2}$ has pure point
spectrum with eigenvalues of finite multiplicity. So there exists $\lambda>0$
depending only on $G$ such that $\left(  \mathbf{L}_{2}h^{\perp},h^{\perp
}\right)  \leq-\lambda\left\Vert h^{\perp}\right\Vert ^{2}$. Hence%
\begin{multline*}
\left(  \mathbf{L}%
\begin{pmatrix}
v & B & h
\end{pmatrix}
,%
\begin{pmatrix}
v & B & h
\end{pmatrix}
\right)  =-C\left\Vert \partial_{0}v\right\Vert ^{2}-s\left\Vert v\right\Vert
^{2}+\left(  Fh^{\perp},v\right) \\
-\left\Vert \partial_{0}B\right\Vert ^{2}-\frac{s}{2}\left\Vert B\right\Vert
^{2}-\lambda\left\Vert h^{\perp}\right\Vert ^{2}.
\end{multline*}
Now for \emph{any }$h$, one integrates by parts and uses the harmonic-Einstein
identity (\ref{Harmonic-Einstein}) to see that%
\[
\left(  Fh,v\right)  =-\int_{0}^{1}\left\langle G^{\prime},h\right\rangle
v^{\prime}\,d\xi.
\]
There exists $\kappa$ depending only on $G$ such that $\left\vert \left\langle
G^{\prime},h\right\rangle \right\vert \leq\kappa$ $\left\vert h\right\vert $
pointwise. Thus one estimates that%
\begin{multline*}
\left(  \mathbf{L}%
\begin{pmatrix}
v & B & h
\end{pmatrix}
,%
\begin{pmatrix}
v & B & h
\end{pmatrix}
\right)  \leq-s\left\Vert v\right\Vert ^{2}-\frac{s}{2}\left\Vert B\right\Vert
^{2}-\frac{\lambda}{2}\left\Vert h^{\perp}\right\Vert ^{2}\\
-\left\{  C\left\Vert \partial_{0}v\right\Vert ^{2}-\kappa\left\Vert
\partial_{0}v\right\Vert \left\Vert h^{\perp}\right\Vert +\frac{\lambda}%
{2}\left\Vert h^{\perp}\right\Vert ^{2}\right\}  .
\end{multline*}
The quantity in braces is nonnegative as long as $C\geq\kappa^{2}/(2\lambda)$.
The result follows.
\end{proof}

\subsection{Convergence and stability of holonomy-rescaled flow}

\begin{proof}
[Proof of Theorem \ref{SolIsStable}]We again follow the proof of
Theorem~\ref{HyperbolicIsStable}, \emph{mutatis mutandis, }except for two
critical steps.

\textsc{Step 1.} The complexification $\mathbf{L}_{\mathrm{sa}}^{\mathbb{C}}$
of the operator defined in (\ref{LSA}) is strictly elliptic and self adjoint
with bounded spectrum, hence is sectorial by standard Schauder estimates.
Observe that $\mathbf{L}^{\mathbb{C}}-\mathbf{L}_{\mathrm{sa}}^{\mathbb{C}%
}=\mathbf{F}^{\mathbb{C}}$, where $\mathbf{F}^{\mathbb{C}}\in\mathcal{L}%
(\Sigma_{2}^{1+\alpha},\Sigma_{2}^{0+\alpha})$ for all $\alpha\in(0,1)$.
Namely, $\mathbf{F}^{\mathbb{C}}$ is a bounded operator from intermediate
spaces $\mathbb{X}_{\vartheta}=(\mathbb{X}_{0},\mathbb{X}_{1})_{\vartheta}$
and $\mathbb{E}_{\vartheta}=(\mathbb{E}_{0},\mathbb{E}_{1})_{\vartheta}$ to
$\mathbb{X}_{0}$ and $\mathbb{E}_{0}$, respectively, with its bound depending
only on the stationary solution about which we are linearizing. The fact that
$\mathbf{L}^{\mathbb{C}}$ is sectorial then follows from classical
perturbation results. For instance, see \cite[Proposition~2.4.1]{Lunardi95}.

\textsc{Step 4.} The only change in this step, once again, is how one controls
the diffeomorphism $\varphi_{\tau}$. Here the argument is much easier, because
the base $\mathcal{S}^{1}$ is flat. By (\ref{DefineVforMT}), the vector fields
$V$ that generate the diffeomorphisms $\psi_{\tau}$ satisfy%
\[
\left\Vert V\right\Vert _{1+\rho}\leq C_{N}\left\Vert \pi^{\mathrm{s}}%
\widehat{\mathbf{g}}(\tau)-\gamma(\pi^{\mathrm{c}}\widehat{\mathbf{g}}%
(\tau))\right\Vert _{\mathbb{X}_{1}}\leq C_{N}Ce^{-\lambda\tau}\left\Vert
\widetilde{\mathbf{g}}(0)-\mathbf{g}\right\Vert _{\mathbb{X}_{\alpha}}.
\]
It is then not hard to see that the diffeomorphisms $\psi_{\tau}$
and$\ \varphi_{\tau}$ must converge. Indeed, this is proved in
\cite[Lemma~3.5]{GIK02}. The remainder of the argument goes through without modification.
\end{proof}

\end{document}